\numberwithin{equation}{section}
\numberwithin{figure}{section}
  \theoremstyle{remark}
  \newtheorem*{rem*}{\protect\remarkname}
\theoremstyle{plain}
\newtheorem{thm}{\protect\theoremname}[section]
  \theoremstyle{plain}
  \newtheorem{lem}[thm]{\protect\lemmaname}
  \theoremstyle{plain}
  \newtheorem*{conjecture*}{\protect\conjecturename}
  \theoremstyle{plain}
  \newtheorem{prop}[thm]{\protect\propositionname}
  \theoremstyle{plain}
  \newtheorem{cor}[thm]{\protect\corollaryname}
  \providecommand{\conjecturename}{Conjecture}
  \providecommand{\corollaryname}{Corollary}
  \providecommand{\lemmaname}{Lemma}
  \providecommand{\propositionname}{Proposition}
  \providecommand{\remarkname}{Remark}
\providecommand{\theoremname}{Theorem}
\begin{document}

\title{Competition in periodic media: I \textendash{} Existence of pulsating
fronts}

\author{Léo Girardin$^{1}$}

\thanks{$^{1}$ Laboratoire Jacques-Louis Lions, CNRS UMR 7598, Université
Pierre et Marie Curie, 4 place Jussieu, 75005 Paris, France}

\email{$^{1}$ girardin@ljll.math.upmc.fr}
\begin{abstract}
This paper is concerned with the existence of pulsating front solutions
in space-periodic media for a bistable two-species competition\textendash diffusion
Lotka\textendash Volterra system. Considering highly competitive systems,
a simple \textquotedblleft high frequency or small amplitudes\textquotedblright{}
algebraic sufficient condition for the existence of pulsating fronts
is stated. This condition is in fact sufficient to guarantee that
all periodic coexistence states vanish and become unstable as the
competition becomes large enough.
\end{abstract}

\keywords{pulsating fronts, periodic media, competition\textendash diffusion
system, stationary states, segregation.}

\subjclass[2000]{35B10, 35B35, 35B40, 35K57, 92D25.}

\maketitle
\tableofcontents{}

\section*{Introduction}

This is the first part of a sequel to our previous article with Grégoire
Nadin \cite{Girardin_Nadin_2015}. In this prequel, we studied the
sign of the speed of bistable traveling wave solutions of the following
competition\textendash diffusion problem:
\[
\left\{ \begin{matrix}\partial_{t}u_{1}-\partial_{xx}u_{1}=u_{1}\left(1-u_{1}\right)-ku_{1}u_{2} & \mbox{ in }\left(0,+\infty\right)\times\mathbb{R}\,\\
\partial_{t}u_{2}-d\partial_{xx}u_{2}=ru_{2}\left(1-u_{2}\right)-\alpha ku_{1}u_{2} & \mbox{ in }\left(0,+\infty\right)\times\mathbb{R}.
\end{matrix}\right.
\]

We proved that, as $k\to+\infty$, the speed of the traveling wave
connecting $\left(1,0\right)$ to $\left(0,1\right)$ converges to
a limit which has exactly the sign of $\alpha^{2}r-d$. In particular,
if $\alpha=r=1$ and if $k$ is large enough, the more motile species
is the invader: this is what we called the \textquotedblleft unity
is not strength\textquotedblright{} result.

In view of this result, it would seem natural to try to generalize
it in heterogeneous spaces, that is to systems with non-constant coefficients.
Is the more motile species still the invading one?

The first obstacle toward this generalization is that of the existence
of traveling fronts \textendash or of some suitable generalization
of these\textendash{} for such a problem. Indeed, while past work
had already established the existence of competitive bistable traveling
waves in the case of homogeneous spaces (recall for instance Gardner
\cite{Gardner_1982} and Kan-On \cite{Kan_on_1995}), to the best
of our knowledge, there is at this time no such pre-established result
in the case of fully heterogeneous spaces (see the recent review of
Guo and Wu \cite{Guo_Wu_2012}).

One of the main difficulties regarding this existence problem is of
course the combination of unboundedness and heterogeneity. This yields
additional difficulties (for instance, there are multiple non-equivalent
definitions of the principal eigenvalue \cite{Berestycki_Ros} and
convenient integration-wise boundary conditions are lacking). Therefore,
it is likely easier to first treat a simple case. With this in mind,
we focus in this article on a simple, yet relevant application-wise
heterogeneity: the periodic one. We hope to pave the way for a possible
future generalization.

Periodic spaces are likely the type of unbounded heterogeneous spaces
we know best how to handle mathematically and thus a literature about
scalar equations in periodic spaces has been developed during the
past few years. Concerning scalar reaction\textendash diffusion in
periodic spaces and with ``KPP\textquotedblright -type non-linearities,
important results have been established recently by Berestycki and
his collaborators \cite{Berestycki_Ham_3,Berestycki_Ham_1,Berestycki_Ham_2}
(see also Nadin \cite{Nadin_2009,Nadin_2011} in space-time periodic
media). We will rely a lot on these scalar results. Regarding bistable
non-linearities, we refer to the work of Ding, Hamel and Zhao \cite{Ding_Hamel_Zhao}
and Zlatos \cite{Zlatos_2015}.

For the sake of simplicity, we will assume that diffusion and interspecific
competition rates are constant. We expect our main ideas to be generalizable
to systems with periodic diffusion and interspecific competition rates,
but we also expect a lot of technical details to get messy and there
might very well be some major issues. As a counterpart to this loss
in generality, we will be able to treat a much larger class of growth\textendash saturation
terms since the explicit form of these will not be prescribed a priori.
We will only require some reasonable \textquotedblleft KPP non-linearities\textquotedblright{}
assumptions. 

Since our final goal is to study the limits of these pulsating fronts
as the competition becomes infinite, we will only consider systems
in which competition is the main underlying mechanism, that is for
large values of the interspecific competition rate. A first consequence
of this approach is that our system will always be bistable. A second
consequence is that segregation phenomena will be involved quite frequently.
Competition-induced segregation in homogeneous spaces have been a
main center of interest of Dancer, Terracini and others since the
nineties (\cite{Conti_Terracin,Crooks_Dancer_,Crooks_Dancer__1,Dancer_Du_1994,Dancer_Guo_199,Dancer_Hilhors,Dancer_2010,Dancer06}
among others). They basically confirmed the intuitive idea that competitors
tend to live in different ecological niches. 

To investigate the existence of bistable pulsating fronts connecting
two extinction states, we have at our disposal recent abstract results
about monotone semiflows stated by Weinberger \cite{Weinberger_200}
(monostable case) and Fang and Zhao \cite{Fang_Zhao_2011} (bistable
case). Even though both articles were mostly concerned by scalar equations,
they were careful enough to include monotone systems, such as two-species
competitive ones, in their framework. Notice that Yu and Zhao \cite{Yu_Zhao_2015}
used a similar framework to prove, in the weak competition case, the
existence of monostable pulsating fronts connecting two extinction
states despite the presence of an intermediate coextinction state
(Weinberger\textquoteright s framework requires no intermediate stationary
state) (see also Fang\textendash Yu\textendash Zhao \cite{Fang_Yu_Zhao}
for a similar work in space-time periodic media). 

The core idea of Fang and Zhao\textquoteright s theorem is as follows:
provided a bistable monotone problem, if all intermediate stationary
states are unstable and if they are invaded by the stable states,
then bistable traveling waves do exist. While these hypotheses might
be easily verified for some problems (say, scalar or space-homogeneous),
in the case exposed here, real issues arise from the segregation phenomenon.
Indeed, stable intermediate segregated periodic coexistence states
might a priori exist. Therefore it is natural to wonder whether periodicity
might induce some simple, yet relevant, sufficient condition to enforce
the non-existence of segregated periodic coexistence states. We will
indeed state one such condition and will show that this condition
is moreover sufficient to guarantee that all remaining periodic stationary
states are unstable and invaded by the stable ones. 

The following pages will be organized as follows: in the first section,
the core hypotheses and framework will be precisely formulated and
the main results stated. The second section will be dedicated to the
proof of the existence of pulsating front solutions; in particular,
we will perform a quite thorough study of the stability of periodic
coexistence states. 

The study of the limit as $k\to+\infty$ of these pulsating fronts
will be the object of the second part \cite{Girardin_Nadin_2016}.

\section{Preliminaries and main results}

Let $d,k,\alpha,L>0$, $C=\left(0,L\right)\subset\mathbb{R}$ and
$\left(f_{1},f_{2}\right):[0,+\infty)\times\mathbb{R}\to\mathbb{R}^{2}$
$L$-periodic with respect to its second variable. For any $u:\mathbb{R}^{2}\to[0,+\infty)$
and $i\in\left\{ 1,2\right\} $, we refer to $\left(t,x\right)\mapsto f_{i}\left(u\left(t,x\right),x\right)$
as $f_{i}\left[u\right]$. Our interest lies in the following competition\textendash diffusion
problem:
\[
\left\{ \begin{matrix}\partial_{t}u_{1}=\partial_{xx}u_{1}+u_{1}f_{1}\left[u_{1}\right]-ku_{1}u_{2}\\
\partial_{t}u_{2}=d\partial_{xx}u_{2}+u_{2}f_{2}\left[u_{2}\right]-\alpha ku_{1}u_{2}
\end{matrix}\right.\quad\left(\mathcal{P}_{k}\right)
\]

\subsection{Preliminaries}

\subsubsection{Redaction conventions. }
\begin{itemize}
\item Mirroring the definition of $f_{1}\left[u\right]$ and $f_{2}\left[u\right]$,
for any function of two real variables $f$ and any real-valued function
$u$ of two real variables, $f\left[u\right]$ will refer to $\left(t,x\right)\mapsto f\left(u\left(t,x\right),x\right)$.
For any real-valued function $u$ of one real variable, $f\left[u\right]$
will refer to $x\mapsto f\left(u\left(x\right),x\right)$. For any
function $f$ of one real variable and any real-valued function $u$
of one or two real variables, $f\left[u\right]$ will simply refer
to $f\circ u$.
\item For the sake of brevity, although we could index everything ($\left(\mathcal{P}\right)$,
$u_{1}$, $u_{2}$\dots ) on $k$ and $d$, the dependencies on $k$
or $d$ will mostly be implicit and will only be made explicit when
it definitely facilitates the reading.
\item Since we consider the limit of this system when $k\to+\infty$, many
(but finitely many) results will only be true when \textquotedblleft $k$
is large enough\textquotedblright . Hence, we define by induction
the positive number $k^{\star}$, whose value is initially $1$ and
is updated each time a statement is only true when \textquotedblleft $k$
is large enough\textquotedblright{} in the following way: if the statement
is true for any $k\geq k^{\star}$, the value of $k^{\star}$ is unchanged;
if, conversely, there exists $K>k^{\star}$ such that the statement
is true for any $k\geq K$ but false for any $k\in[k^{\star},K)$,
the value of $k^{\star}$ becomes that of $K$. In the text, we will
indifferently write \textquotedblleft for $k$ large enough\textquotedblright{}
or \textquotedblleft provided $k^{\star}$ is large enough\textquotedblright .
Moreover, when $k$ indexes appear, they a priori indicate that we
are considering families indexed on (equivalently, functions defined
on) $[k^{\star},+\infty)$, but for the sake of brevity, when sequential
arguments imply extractions of sequences and subsequences indexed
themselves on increasing elements of $[k^{\star},+\infty)^{\mathbb{N}}$,
we will not explicitly define these sequences of indexes and will
simply stick with the indexes $k$, reindexing along the course of
the proof the considered objects. In such a situation, the statement
``as $k\to+\infty$'' should be understood unambiguously.
\item Periodicity will always implicitly mean $L$-periodicity (unless explicitly
stated otherwise). For any functional space $X$ on $\mathbb{R}$,
$X_{per}$ denotes the subset of $L$-periodic elements of $X$.
\item We will use the classical partial order on the space of functions
from any $\Omega\subset\mathbb{R}^{N}$ to $\mathbb{R}$: $g\leq h$
if and only if, for any $x\in\Omega$, $g\left(x\right)\leq h\left(x\right)$,
and $g<h$ if and only if $g\leq h$ and $g\neq h$. We recall that
when $g<h$, there might still exists $x\in\Omega$ such that $g\left(x\right)=h\left(x\right)$.
If, for any $x\in\Omega$, $g\left(x\right)<h\left(x\right)$, we
use the notation $g\ll h$. In particular, if $g\geq0$, we say that
$g$ is non-negative, if $g>0$, we say that $g$ is non-negative
non-zero, and if $g\gg0$, we say that $g$ is positive (and we define
similarly non-positive, non-positive non-zero and negative functions).
Eventually, if $g_{1}\leq h\leq g_{2}$, we write $h\in\left[g_{1},g_{2}\right]$,
if $g_{1}<h<g_{2}$, we write $h\in\left(g_{1},g_{2}\right)$, and
if $g_{1}\ll h\ll g_{2}$, we write $h\in\left\langle g_{1},g_{2}\right\rangle $. 
\item We will also use the partial order on the space of vector functions
$\Omega\to\mathbb{R}^{N'}$ naturally derived from the preceding partial
order. It will involve similar notations.
\item The periodic principal eigenvalue of a second order elliptic operator
$\mathcal{L}$ with periodic coefficients will be generically referred
to as $\lambda_{1,per}\left(-\mathcal{L}\right)$. Recall (from Berestycki\textendash Hamel\textendash Roques
\cite{Berestycki_Ham_1} for instance) that the periodic principal
eigenvalue of $\mathcal{L}$ is the unique real number $\lambda$
such that there exists a periodic function $\varphi\gg0$ satisfying:
\[
\left\{ \begin{matrix}-\mathcal{L}\varphi=\lambda\varphi\mbox{ in }\mathbb{R}\\
\|\varphi\|_{L^{\infty}\left(C\right)}=1
\end{matrix}\right.
\]
The Dirichlet principal eigenvalue of an elliptic operator $\mathcal{L}$
in a sufficiently smooth domain $\Omega$ will be referred to as $\lambda_{1,Dir}\left(-\mathcal{L},\Omega\right)$.
Since our framework is spatially one-dimensional, such elliptic operators
will involve first and second derivatives with respect to the spatial
variable $x$.
\end{itemize}

\subsubsection{Hypotheses on the reaction.}

For any $i\in\left\{ 1,2\right\} $, we have in mind functions $f_{i}$
such that the reaction term $uf_{i}\left[u\right]$ is of logistic
type (also known as KPP type). At least, we want to cover the largest
possible class of $\left(u,x\right)\mapsto\mu\left(x\right)-\nu\left(x\right)u$.
This is made precise by the following assumptions.

{ \renewcommand\labelenumi{($\mathcal{H}_\theenumi$)}
\begin{enumerate}
\item $f_{i}$ is $\mathcal{C}^{1}$ with respect to its first variable
up to $0$ and Hölder-continuous with respect to its second variable
with a Hölder exponent larger than or equal to $\frac{1}{2}$.
\item There exists a constant $m_{i}>0$ such that $f_{i}\left[0\right]\geq m_{i}$.
\item $f_{i}$ is decreasing with respect to its first variable and there
exists $a_{i}>0$ such that, if $u>a_{i}$, then for any $x\in\mathbb{R}$
$f_{i}\left(u,x\right)<0$.
\end{enumerate}
}
\begin{rem*}
If $f_{i}$ is in the class of all $\left(u,x\right)\mapsto\mu\left(x\right)-\nu\left(x\right)u$,
then $\mu,\nu\in\mathcal{C}_{per}^{0,\nicefrac{1}{2}}\left(\mathbb{R}\right)$,
$\mu\gg0$, $\nu\gg0$. More generally, from $\left(\mathcal{H}_{1}\right)$,
$\left(\mathcal{H}_{2}\right)$ and the periodicity of $f_{i}\left[0\right]$,
it follows immediately that there exists a constant $M_{i}>m_{i}$
such that $f_{i}\left[0\right]\leq M_{i}$. Without loss of generality,
we assume that $m_{i}$ and $M_{i}$ are optimal, that is $m_{i}=\min\limits _{\overline{C}}f_{i}\left[0\right]$
and $M_{i}=\max\limits _{\overline{C}}f_{i}\left[0\right]$. 
\end{rem*}
We refer to $\max\left(M_{1},M_{2}\right)$ (resp. $\min\left(m_{1},m_{2}\right)$)
as $M$ (resp. $m$).

Furthermore, we need a coupled hypothesis on the pair $\left(f_{1},f_{2}\right)$. 

{ \renewcommand\labelenumi{($\mathcal{H}_{freq}$)}
\begin{enumerate}
\item The constants $d$, $M_{1}$ and $M_{2}$ satisfy $L<\pi\left(\frac{1}{\sqrt{M_{1}}}+\sqrt{\frac{d}{M_{2}}}\right)$.
\end{enumerate}
}
\begin{rem*}
Even if this might not be clear right now, this is the key hypothesis.
$\left(\mathcal{H}_{freq}\right)$ means that, given a fixed amplitude,
we consider high frequencies, or equivalently, given a fixed frequency,
we consider low amplitudes. This sufficient condition for existence
might be a bit relaxed but the best condition we can give is very
verbose and only slightly better. See the proof of Proposition \ref{prop:high_frequency_consequence},
which is where $\left(\mathcal{H}_{freq}\right)$ plays its role. 
\end{rem*}

\subsection{Two main results and a conjecture}

Using known results about scalar equations and periodic principal
eigenvalues \cite{Berestycki_Ham_1}, the following lemma is quite
straightforward (as will show Subsection \ref{subsec:Extinction-states}).
\begin{lem}
Assume that $f_{1}$ and $f_{2}$ satisfy $\left(\mathcal{H}_{1}\right)$,
$\left(\mathcal{H}_{2}\right)$ and $\left(\mathcal{H}_{3}\right)$.

The set of all periodic stationary states of the problem $\left(\mathcal{P}\right)$
contains $\left(0,0\right)$, which is unstable, and a pair $\left\{ \left(\tilde{u}_{1},0\right),\left(0,\tilde{u}_{2}\right)\right\} $
with $\left(\tilde{u}_{1},\tilde{u}_{2}\right)\in\mathcal{C}_{per}^{2}\left(\mathbb{R},\left(0,+\infty\right)^{2}\right)$. 
\end{lem}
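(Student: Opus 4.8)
The plan is to decouple the system at a semi-trivial state and fall back on the classical theory of periodic KPP equations. \textbf{Step 1 (the semi-trivial states).} Fix $i\in\{1,2\}$ and write $d_{1}=1$, $d_{2}=d$. Consider the scalar periodic problem of finding $\varphi\gg0$, $L$-periodic, with $d_{i}\varphi''+\varphi f_{i}[\varphi]=0$ on $\mathbb{R}$. By $(\mathcal{H}_{1})$ and $(\mathcal{H}_{2})$ the zeroth-order coefficient $f_{i}[0]$ of the linearization at $\varphi=0$ is periodic, H\"older-continuous and bounded below by $m_{i}>0$; integrating the principal eigenfunction equation over a period gives $\lambda_{1,per}(-d_{i}\partial_{xx}-f_{i}[0])\in[-M_{i},-m_{i}]$, so it is negative. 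Hence, with $\varphi_{i}$ the associated principal eigenfunction, $\varepsilon\varphi_{i}$ is a strict subsolution for $\varepsilon>0$ small, while by $(\mathcal{H}_{3})$ any constant $\geq a_{i}$ is a supersolution; these sub- and supersolutions are ordered, and a monotone iteration between them produces a periodic solution $\tilde{u}_{i}$ with $0\ll\tilde{u}_{i}\leq a_{i}$. Schauder estimates, using the H\"older regularity in $(\mathcal{H}_{1})$, upgrade $\tilde{u}_{i}$ to $\mathcal{C}_{per}^{2}$ (indeed $\mathcal{C}^{2,\beta}$). The monotonicity of $f_{i}$ in its first variable in $(\mathcal{H}_{3})$ moreover yields, by a comparison/sliding argument, uniqueness of the positive periodic solution, which pins down the pair; but only existence is needed for the statement.

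\textbf{Step 2 (conclusion for $(\mathcal{P})$).} Since the competition terms $ku_{1}u_{2}$ and $\alpha ku_{1}u_{2}$ vanish identically as soon as one component is $0$, the pairs $(0,0)$, $(\tilde{u}_{1},0)$ and $(0,\tilde{u}_{2})$ are all periodic stationary states of $(\mathcal{P})$. It remains to check that $(0,0)$ is unstable. Linearizing $(\mathcal{P})$ at $(0,0)$ gives the block-diagonal system $\partial_{t}w_{1}=\partial_{xx}w_{1}+f_{1}[0]w_{1}$, $\partial_{t}w_{2}=d\partial_{xx}w_{2}+f_{2}[0]w_{2}$; by Step 1 the first block has periodic principal eigenvalue $\lambda_{1,per}(-\partial_{xx}-f_{1}[0])<0$, so its principal eigenfunction is an exponentially growing periodic mode, whence $(0,0)$ is linearly, hence nonlinearly, unstable.

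\textbf{Main obstacle.} None of the steps is deep; I expect the only mildly delicate points to be (i) checking that the periodic-KPP existence and characterization results (Berestycki\textendash Hamel\textendash Roques) really do apply under the weak hypotheses $(\mathcal{H}_{1})$\textendash$(\mathcal{H}_{3})$, where $f_{i}$ need not be affine in $u$, and (ii) reconciling the abstract notion of (in)stability used throughout the paper with the sign of the periodic principal eigenvalue of the linearized operator. Hypothesis $(\mathcal{H}_{3})$ is precisely what furnishes both the a priori $L^{\infty}$ bound and the ordered constant supersolution that make the compactness and the monotone iteration go through.
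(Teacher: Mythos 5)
Your proposal is correct and takes essentially the same route as the paper: the paper likewise reduces to the scalar periodic logistic equations, deduces $\lambda_{1,per}\left(-d_{i}\frac{\mbox{d}^{2}}{\mbox{d}x^{2}}-f_{i}\left[0\right]\right)<0$ from $\left(\mathcal{H}_{2}\right)$ (via monotonicity of the periodic principal eigenvalue in the zeroth-order coefficient, where you instead integrate the eigenfunction equation over a period \textendash{} both are fine), and then simply cites the Berestycki\textendash Hamel\textendash Roques existence/uniqueness theorem for the positive periodic states $\tilde{u}_{1},\tilde{u}_{2}$, which your sub/supersolution iteration re-derives by hand. The instability of $\left(0,0\right)$ via the block-diagonal linearization with negative principal eigenvalues is exactly what the paper implicitly relies on, so no gap there.
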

As usual in the literature concerning competitive systems, hereafter,
the stationary states with exactly one null component are referred
to as \textit{extinction states} whereas the stationary states with
no null component are referred to as \textit{coexistence states}.
The extinction states of $\left(\mathcal{P}\right)$ are periodic
and some of its coexistence states may be periodic as well.

Our contribution to the study of the stationary states is the following
theorem.
\begin{thm}
Assume that $f_{1}$ and $f_{2}$ satisfy $\left(\mathcal{H}_{1}\right)$,
$\left(\mathcal{H}_{2}\right)$ and $\left(\mathcal{H}_{3}\right)$
and that $\left(f_{1},f_{2}\right)$ satisfies $\left(\mathcal{H}_{freq}\right)$.

Then there exists $k^{\star}>0$ such that, for any $k>k^{\star}$,
each extinction state is locally asymptotically stable and any periodic
coexistence state is unstable. 

Furthermore, let $\left(u_{1,k},u_{2,k}\right)_{k>k^{\star}}$ be
a family of $\mathcal{C}_{per}^{2}\left(\mathbb{R},\mathbb{R}^{2}\right)$
such that, for any $k>k^{\star}$, $\left(u_{1,k},u_{2,k}\right)$
is an unstable periodic stationary state of $\left(\mathcal{P}_{k}\right)$.
Then $\left(u_{1,k},u_{2,k}\right)$ converges in $\mathcal{C}_{per}\left(\mathbb{R},\mathbb{R}^{2}\right)$
to $\left(0,0\right)$ as $k\to+\infty$.
\end{thm}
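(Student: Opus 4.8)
I would split the theorem into the stability of the two extinction states, the instability of the periodic coexistence states, and the convergence statement, the last coming out of the same compactness–and–segregation analysis as the second. \emph{For the extinction states}, linearizing $(\mathcal{P}_k)$ at $(\tilde u_1,0)$ produces a block-triangular operator — the equation for the $u_2$-perturbation decouples since $\tilde u_2\equiv0$ — whose spectral bound is the larger of those of the scalar operators $\partial_{xx}+f_1[\tilde u_1]+\tilde u_1 f_1'[\tilde u_1]$ and $d\partial_{xx}+f_2[0]-\alpha k\tilde u_1$. Because $\tilde u_1\gg0$ solves $-\partial_{xx}\tilde u_1=\tilde u_1 f_1[\tilde u_1]$ one has $\lambda_{1,per}(-\partial_{xx}-f_1[\tilde u_1])=0$, and since the extra zeroth-order term $-\tilde u_1 f_1'[\tilde u_1]$ is strictly positive (as $f_1'<0$ by $(\mathcal{H}_3)$), $\lambda_{1,per}(-\partial_{xx}-f_1[\tilde u_1]-\tilde u_1 f_1'[\tilde u_1])>0$; on the other hand $\lambda_{1,per}(-d\partial_{xx}-f_2[0]+\alpha k\tilde u_1)\geq\alpha k\min_{\overline C}\tilde u_1-M_2\to+\infty$, hence is positive for $k$ large. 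So $(\tilde u_1,0)$, and symmetrically $(0,\tilde u_2)$ (the eigenvalue to watch being $\lambda_{1,per}(-\partial_{xx}-f_1[0]+k\tilde u_2)$), is linearly and therefore locally asymptotically stable. I expect no real difficulty here: strong competition drives the ``invasion eigenvalue'' to $+\infty$.

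\emph{For the coexistence states}, at a periodic coexistence state $(u_1,u_2)\gg0$, linearizing and conjugating by $\mathrm{diag}(1,-1)$ yields a cooperative operator whose periodic principal eigenvalue $\lambda_k$ governs stability, instability meaning $\lambda_k<0$. I would argue by contradiction: let $k_n\to+\infty$ and coexistence states $(u_{1,n},u_{2,n})$ with $\lambda_{k_n}\geq0$. From $(\mathcal{H}_3)$ and the maximum principle, $0\ll u_{i,n}\leq a_i$; integrating the equations over $C$ bounds $k_n\int_C u_{1,n}u_{2,n}$, and feeding the resulting uniform $\mathcal{C}^0$-bound back into the equations bounds $u_{i,n}$ in $\mathcal{C}^{0,1}_{per}$, so up to extraction $u_{i,n}\to u_i^\infty\geq0$ in $\mathcal{C}_{per}$ with $u_1^\infty u_2^\infty\equiv0$. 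Moreover $z_n:=\alpha u_{1,n}-d u_{2,n}$ solves a uniformly elliptic scalar equation in which the competition term cancels, hence $z_n$ converges in $\mathcal{C}^2_{per}$; consequently, on each open arc of $\Omega_i:=\{u_i^\infty>0\}$, $u_i^\infty$ is a positive solution of the scalar KPP equation vanishing at the endpoints, so the associated Dirichlet principal eigenvalue there is $0$ and, since $f_i[u_i^\infty]\leq M_i$, each arc of $\Omega_1$ has length at least $\pi/\sqrt{M_1}$ and each arc of $\Omega_2$ length at least $\pi\sqrt{d/M_2}$. Proposition \ref{prop:high_frequency_consequence} — the quantitative content of $(\mathcal{H}_{freq})$, forbidding disjoint arcs of those lengths on the circle of circumference $L$ — then rules out $\Omega_1$ and $\Omega_2$ being both nonempty; and if, say, $u_2^\infty\equiv0$ then $u_1^\infty$ solves scalar KPP on all of $C$, so equals $\tilde u_1$ or $0$, with $u_1^\infty=\tilde u_1$ impossible since then $\alpha k_n u_{1,n}\to+\infty$ uniformly whereas $u_{2,n}>0$ forces $\lambda_{1,per}(-d\partial_{xx}-f_2[u_{2,n}]+\alpha k_n u_{1,n})=0$. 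Hence $(u_{1,n},u_{2,n})\to(0,0)$ in $\mathcal{C}_{per}$, and it remains to contradict $\lambda_{k_n}\geq0$ for coexistence states this close to the doubly-unstable state $(0,0)$. \textbf{This is the step I expect to be the main obstacle}: the coefficients $k_n u_{i,n}$ are bounded only in $L^1$, not in $L^\infty$, so one cannot merely pass to the limit in the linearized operator; one must show that its diagonal part, whose principal eigenvalue is $O(\|u_{i,n}\|_\infty)\to0^+$ by comparison with the identity $\lambda_{1,per}(-\partial_{xx}-f_1[u_{1,n}]+k_n u_{2,n})=0$, is dragged strictly below $0$ by the nonnegative off-diagonal coupling, whose $L^1$-masses $\int_C k_n u_{i,n}$ one should first show are bounded away from $0$ (else the equations themselves would force an extinction-state limit). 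Quantifying that downward push is the crux.

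\emph{For the convergence statement}, note first that, besides the periodic coexistence states, the periodic stationary states of $(\mathcal{P}_k)$ are exactly $(0,0)$ and the two extinction states (uniqueness of the positive periodic solution of each scalar KPP equation), and for $k>k^\star$ the latter two are stable by the first step; hence an unstable periodic stationary state is either $(0,0)$ or a coexistence state. Were $(u_{1,k},u_{2,k})$ not to converge to $(0,0)$ in $\mathcal{C}_{per}$, some subsequence would consist of coexistence states bounded away from $(0,0)$; applying to it verbatim the compactness-and-segregation argument of the previous paragraph — which uses nothing about stability — its limit $(u_1^\infty,u_2^\infty)$ would be a nonzero segregated pair, hence either with $\Omega_1$ and $\Omega_2$ both nonempty, contradicting $(\mathcal{H}_{freq})$ through Proposition \ref{prop:high_frequency_consequence}, or with one component vanishing and the other equal to the corresponding $\tilde u_i$, contradicting the divergence of the associated periodic principal eigenvalue along this family of positive solutions. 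Therefore $(u_{1,k},u_{2,k})\to(0,0)$ in $\mathcal{C}_{per}$; observe that this last part relies only on the easy stability of the extinction states, not on the delicate instability argument above.
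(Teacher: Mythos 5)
Your treatment of the extinction states and of the convergence statement matches the paper (triangular linearization at $(\tilde u_1,0)$, the compactness--segregation analysis with $z=\alpha u_1-du_2$, the arc-length/Dirichlet-eigenvalue consequence of $(\mathcal{H}_{freq})$, and the exclusion of the limits $(\tilde u_1,0)$, $(0,\tilde u_2)$ via the divergence of $\alpha k\min u_1$, resp.\ $k\min u_2$). But the central claim --- instability of every periodic coexistence state --- is exactly the step you flag as ``the main obstacle'' and leave open, so the proposal has a genuine gap at the heart of the theorem. Knowing that the coexistence states converge to $(0,0)$ does not by itself contradict $\lambda_k\geq 0$, and your suggestion that one should only expect $L^1$ control of $k u_{i,k}$ understates what is both needed and available.

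The paper closes this gap with quantitative information on $ku_{i,k}$ rather than with a perturbation of the linearized operator near $(0,0)$. First it proves two-sided bounds on the ratio $\|u_{2,k}\|_\infty/\bigl(\alpha\|u_{1,k}\|_\infty\bigr)$, then a uniform $L^\infty$ bound on $\bigl(ku_{1,k},ku_{2,k}\bigr)$ (via the normalized functions $\hat u_{i,k}=u_{i,k}/\|u_{i,k}\|_\infty$ and the Liouville-type statement that $-z''=\gamma[z]$ has only the trivial periodic solution under $(\mathcal{H}_{freq})$), and finally --- the decisive point --- a uniform positive lower bound
\[
\inf_{k>k^\star}\ \inf_{(u_1,u_2)\in S_k}\ \min\Bigl\{\min_{\overline C}\bigl(ku_1\bigr),\ \min_{\overline C}\bigl(ku_2\bigr)\Bigr\}>0,
\]
obtained by passing to the limit in the rescaled system $-U_1''=U_1f_1[0]-U_1U_2$, $-dU_2''=U_2f_2[0]-\alpha U_1U_2$, integrating $-U_i''/U_i$ over a period to get $\int_C U_2\geq\int_C f_1[0]\geq mL>0$ (and symmetrically), and invoking Harnack. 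With this in hand the instability is immediate and requires no passage to the limit in the linearization: taking $\varphi=(u_1,u_2)$ itself as test function in the Krein--Rutman characterization, the stationary equations give
\[
-\mathcal{A}_{(u_1,u_2)}\begin{pmatrix}u_1\\u_2\end{pmatrix}
=\begin{pmatrix}-u_1^2\partial_1 f_1[u_1]-ku_1u_2\\ -u_2^2\partial_1 f_2[u_2]-\alpha ku_1u_2\end{pmatrix}
\leq\begin{pmatrix}(Ru_1-ku_2)u_1\\ (Ru_2-\alpha ku_1)u_2\end{pmatrix},
\]
and since $ku_i$ is uniformly bounded below while $u_i\to0$ uniformly, $\min_{\overline C}(ku_2-Ru_1)$ and $\min_{\overline C}(\alpha ku_1-Ru_2)$ are positive for $k>k^\star$, whence $\lambda_{1,per}\bigl(-\mathcal{A}_{(u_1,u_2)}\bigr)<0$. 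If you want to complete your argument, this lower bound on $\min_{\overline C}(ku_i)$ (not just on $\int_C ku_i$) plus the self-testing trick is the missing ingredient; without something of this strength your ``downward push'' remains unquantified and the contradiction with $\lambda_{k_n}\geq0$ does not follow.
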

\begin{rem*}
We stress that we did not investigate the existence nor the countability
of the subset of periodic coexistence states. We stress as well that
we did not investigate at all aperiodic coexistence states. We believe
that a sharper description of the set of stationary states of $\left(\mathcal{P}\right)$
could follow from bifurcation arguments (see Hutson\textendash Lou\textendash Mischaikow
\cite{Hutson02} or Furter\textendash López-Gómez \cite{Furter_Lopez_G}).
Since it was not our point at all (instability of periodic coexistence
states was only a required step toward existence of pulsating fronts),
we chose to leave this subject as an open question. 
\end{rem*}
Thanks to the previous theorem, it is then possible to prove the following
existence theorem.
\begin{thm}
Assume that $f_{1}$ and $f_{2}$ satisfy $\left(\mathcal{H}_{1}\right)$,
$\left(\mathcal{H}_{2}\right)$ and $\left(\mathcal{H}_{3}\right)$
and that $\left(f_{1},f_{2}\right)$ satisfies $\left(\mathcal{H}_{freq}\right)$.

Then there exists $k^{\star}>0$ such that, for any $k>k^{\star}$,
the problem $\left(\mathcal{P}\right)$ admits a bistable pulsating
front solution connecting the two extinction states. 
\end{thm}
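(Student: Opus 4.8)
The plan is to obtain the pulsating front as an application of the abstract theory of bistable traveling waves for monotone semiflows of Fang and Zhao \cite{Fang_Zhao_2011}, whose hypotheses will be supplied almost entirely by the preceding theorem. First I would perform the classical competitive-to-cooperative change of unknowns $w_{1}=u_{1}$, $w_{2}=\tilde{u}_{2}-u_{2}$, under which $(\mathcal{P})$ becomes a cooperative reaction--diffusion system (irreducible on the interior of the relevant order interval). The box $0\leq u_{1}\leq\tilde{u}_{1}$, $0\leq u_{2}\leq\tilde{u}_{2}$ is positively invariant for $(\mathcal{P})$ by the parabolic comparison principle, hence the order interval $[\beta_{-},\beta_{+}]$ with $\beta_{-}=(0,0)$ and $\beta_{+}=(\tilde{u}_{1},\tilde{u}_{2})$ in the variables $(w_{1},w_{2})$ is invariant for the cooperative system, and the Cauchy problem generates on it a monotone semiflow. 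In these variables the two extinction states of $(\mathcal{P})$ are precisely $\beta_{-}$ (the state $(0,\tilde{u}_{2})$ of $(\mathcal{P})$) and $\beta_{+}$ (the state $(\tilde{u}_{1},0)$ of $(\mathcal{P})$), so a pulsating front of $(\mathcal{P})$ connecting the extinction states is exactly a pulsating front of the cooperative system connecting $\beta_{-}$ to $\beta_{+}$, monotone in the moving variable; and such fronts are the traveling waves of the $L$-shifted time-$t$ solution map, which is the discrete-time monotone, translation-invariant map to which \cite{Fang_Zhao_2011} applies.

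The structural assumptions of \cite{Fang_Zhao_2011} are then checked one by one. Translation invariance by $L$ comes from the $L$-periodicity of $f_{1}$ and $f_{2}$; continuity of the semiflow and the (weak) compactness of its time-$t$ maps follow from parabolic smoothing together with the uniform $L^{\infty}$ bound provided by $(\mathcal{H}_{3})$; monotonicity holds by construction. The endpoints $\beta_{\pm}$ are order-stable: this is the local asymptotic stability of the extinction states from the preceding theorem. It remains to describe the equilibria of the cooperative semiflow inside $[\beta_{-},\beta_{+}]\setminus\{\beta_{-},\beta_{+}\}$: by the elliptic strong maximum principle and the uniqueness of the positive periodic solution of the scalar equations, the stationary states of $(\mathcal{P})$ in the invariant box are exactly $(0,0)$, $(\tilde{u}_{1},0)$, $(0,\tilde{u}_{2})$ and the periodic coexistence states, so the intermediate equilibria of the cooperative system are the image of $(0,0)$ --- namely $(0,\tilde{u}_{2})$ in the $(w_{1},w_{2})$ variables --- and the images of the periodic coexistence states. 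The former is unstable by the Lemma and the latter are unstable by the preceding theorem; moreover the convergence statement of the preceding theorem shows that, for $k$ large, all of these intermediate equilibria are concentrated near the single state $(0,\tilde{u}_{2})$, which essentially collapses the bistable structure to be verified to that of a single intermediate level.

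The genuinely delicate ingredient is the counter-propagation condition of \cite{Fang_Zhao_2011}: mere instability of the intermediate equilibria is not enough, one must also show that the two stable states \emph{invade} them, i.e.\ that the monostable sub-semiflows obtained on the order sub-intervals between consecutive equilibria have spreading speeds with the sign that makes the bistable wave speed well-defined. Here the KPP hypothesis $(\mathcal{H}_{2})$ is used again: instability of an intermediate equilibrium $e$ is quantified by the negativity of the periodic principal eigenvalue $\lambda_{1,per}$ of the linearization at $e$, and the scalar periodic theory of Berestycki, Hamel and Roques \cite{Berestycki_Ham_1} then yields a strictly positive monostable spreading speed toward the larger adjacent equilibrium; a comparison argument turns this into the required invasion property, and symmetrically from below. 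I expect this verification --- together with the (technical but routine) adaptation of \cite{Fang_Zhao_2011} from the homogeneous-space setting to the pulsating, i.e.\ $L$-shifted, one --- to be the main obstacle, the concentration of all intermediate equilibria near a single state for $k$ large being the tool that keeps it tractable. Once every hypothesis is verified, \cite{Fang_Zhao_2011} produces a pulsating front of the cooperative system joining $\beta_{-}$ to $\beta_{+}$ and monotone in the moving frame; undoing the change of variables yields a bistable pulsating front of $(\mathcal{P})$ connecting the two extinction states, with $u_{1}$ non-decreasing and $u_{2}$ non-increasing along the front.
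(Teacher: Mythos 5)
Your overall route is the same as the paper's: the change of unknowns $v_{2}=\tilde{u}_{2}-u_{2}$ turning $\left(\mathcal{P}\right)$ into the cooperative system $\left(\mathcal{M}\right)$, the identification of the extinction states with the extremal equilibria $\left(0,0\right)$ and $\left(\tilde{u}_{1},\tilde{u}_{2}\right)$, the verification of the structural hypotheses on the Poincar\'e map $Q_{t}$, and the use of the preceding theorem (stability of the extinction states, instability of all periodic coexistence states, together with the instability of the coextinction state) to feed the bistable hypothesis of Fang--Zhao \cite{Fang_Zhao_2011}. The classification of the intermediate periodic equilibria and the role of the coextinction state are also handled as in the paper.

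The genuine gap is exactly the point you flag as ``the main obstacle'' and then leave unproved: the counter-propagation condition. Your sketch (negativity of the principal eigenvalue at an intermediate state, then ``the scalar periodic theory of \cite{Berestycki_Ham_1} yields a strictly positive monostable spreading speed, and a comparison argument concludes'') does not go through at a periodic coexistence state $\left(u_{1},u_{2}\right)\in S$: the two monostable subproblems there are genuinely coupled two-component problems, and the instability is produced precisely by the off-diagonal terms $ku_{1}$ and $\alpha ku_{2}$ of the linearization $\mathcal{A}_{\left(u_{1},u_{2}\right)}$ (the paper exhibits $\left(u_{1},u_{2}\right)$ itself as a test function, using the uniform lower bound $\min_{\overline{C}}\left(ku_{i}\right)\geq\epsilon$ of Lemma \ref{lem:ku,kv_bounded_from_below}), so there is no scalar KPP equation to compare with near such a state. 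What the paper actually does is apply Weinberger's result \cite[Theorem 2.4]{Weinberger_200} to the monotone semiflow $Q_{t}$ versus the linearized semiflow $Q_{t}^{u,lin}$: one must prove the comparison $Q_{t}\left[\left(v_{1},v_{2}\right)+\left(u_{1},\tilde{u}_{2}-u_{2}\right)\right]-\left(u_{1},\tilde{u}_{2}-u_{2}\right)\geq\left(1-\delta\right)Q_{t}^{u,lin}\left[\left(v_{1},v_{2}\right)\right]$ for small ordered perturbations, and this step requires the strong positivity of $\mathcal{A}_{\left(u_{1},u_{2}\right)}$ (Lemma \ref{lem:A_strongly_positive}), which in turn rests on Lemmas \ref{lem:(ku,kv)_bounded} and \ref{lem:ku,kv_bounded_from_below}; it then yields the explicit bound of both spreading speeds from below by $2\sqrt{\min\left(1,d\right)\left|\lambda_{1,per}\left(-\mathcal{A}_{\left(u_{1},u_{2}\right)}\right)\right|}>0$. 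Note also that none of this is retrievable from the mere statement of the preceding theorem (``unstable'' must be upgraded to a strictly negative periodic principal eigenvalue of a strongly positive operator), and that your heuristic that the concentration of all intermediate equilibria near one state ``collapses'' the verification plays no direct role: for each fixed $k$ every coexistence state must be treated individually, and what the concentration really supplies is the uniform positivity of $ku_{1}$ and $ku_{2}$ used above.
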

To end this subsection, let us present an important conjecture about
the existence problem and about the sharpness of $\left(\mathcal{H}_{freq}\right)$.
We did not address this question but hopefully others will.
\begin{conjecture*}
\label{conjecture_existence} Neither $\left(\mathcal{H}_{freq}\right)$
nor the nonexistence of a stable periodic coexistence state are necessary
conditions for the existence of a bistable pulsating front solution
connecting the two extinction states. 

Furthermore, there exists a non-empty set of parameters $\left(L,d,\alpha,k,f_{1},f_{2}\right)$
such that no such pulsating front exists.
\end{conjecture*}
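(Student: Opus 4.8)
The statement naturally splits into three essentially independent claims, which I would attack separately: \textbf{(A)} $\left(\mathcal{H}_{freq}\right)$ is not necessary for the existence of a bistable pulsating front; \textbf{(B)} the existence of a stable periodic coexistence state does not by itself preclude such a front; \textbf{(C)} for some parameters no such front exists at all. I expect claims (A) and (C) to be within reach, while (B) is the genuine obstacle — which is presumably why the authors only conjecture the statement.

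For (A) I would reduce to spatially homogeneous reaction terms. Take $f_1,f_2$ independent of $x$ — say $f_i\left(u,x\right)=\mu_i-\nu_i u$ with $\mu_i,\nu_i>0$, so that $M_i=m_i=\mu_i$ — and then choose the period $L$ large enough that $L\geq\pi\left(\frac{1}{\sqrt{\mu_1}}+\sqrt{\frac{d}{\mu_2}}\right)$, i.e. so that $\left(\mathcal{H}_{freq}\right)$ fails. For constant coefficients $\left(\mathcal{P}\right)$ is translation invariant, hence any classical traveling front $u\left(t,x\right)=\phi\left(x-ct\right)$ satisfies $u\left(t+\frac{L}{c},x\right)=u\left(t,x-L\right)$ and is therefore an $L$-periodic pulsating front for \textit{every} $L$. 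For $k$ large (with $\alpha$ fixed, so that $\min\left(k,\alpha k\right)$ exceeds the relevant growth rates) the system is bistable — both semi-trivial states $\left(\mu_1/\nu_1,0\right)$ and $\left(0,\mu_2/\nu_2\right)$ are linearly stable — and the classical existence results of Gardner and Kan-On provide a bistable traveling front connecting them. This is a bistable pulsating front connecting the two extinction states while $\left(\mathcal{H}_{freq}\right)$ is violated, which proves (A). I would also record that in this configuration the unique constant coexistence state is a saddle and there is no other periodic coexistence state, which is consistent with — but does not yet prove — (B).

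Claim (B) is where I expect the real difficulty. The naive obstruction is that, by a standard comparison argument, every periodic coexistence state $\left(v_1,v_2\right)$ satisfies $0\ll v_i\ll\tilde u_i$, hence lies strictly inside the competition-order interval between the extinction states $E_1=\left(\tilde u_1,0\right)$ and $E_2=\left(0,\tilde u_2\right)$; and in the monotone-systems world (as in the scalar Fife--McLeod stacked-wave phenomenon) an intermediate \textit{stable} equilibrium inside that interval blocks any order-preserving traveling connection between its endpoints. To make (B) true one must therefore either (i) abandon monotonicity and look for a non-monotone bistable pulsating front threading through the stable coexistence state, or (ii) adopt a generalized notion of connecting front — a propagating terrace stacking the front from $E_1$ to the coexistence state with the front from the coexistence state to $E_2$ — or (iii) engineer the stable coexistence state so that it is not comparable to both extinction states in the relevant order, which forces one to re-examine the comparison $v_i\ll\tilde u_i$ under genuinely periodic, non-$\left(\mathcal{H}_{freq}\right)$ coefficients. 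My concrete plan would combine route (ii) with a bifurcation construction: starting from a configuration covered by the main theorem (all coexistence states unstable, a front given), deform the periodic pair $\left(f_1,f_2\right)$ through a López-G\'omez / Hutson--Lou--Mischaikow-type bifurcation that creates a stable segregated periodic coexistence state, and follow the front along the deformation by a Leray--Schauder degree / continuation argument in the spirit of Fang and Zhao's construction, showing that at least a terrace-type connecting front persists on an open parameter range past the bifurcation. Pinning down the compactness and regularity needed for the continuation, and above all deciding which notion of front actually survives, is the crux, and I would not expect a short argument here.

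Finally, for (C) I would use weak competition. Choosing $k$ and $\alpha$ so that $\min\left(k,\alpha k\right)$ is small compared with $m_1,m_2$, a periodic-principal-eigenvalue computation in the Berestycki--Hamel--Roques framework shows that both extinction states become unstable while a periodic coexistence state appears and is stable; hence $\left(\mathcal{P}\right)$ is no longer bistable and there is in particular no bistable pulsating front connecting the two extinction states, giving a non-empty parameter set with no such front. As a remark I would note the alternative strong-competition mechanism: periodic $\left(f_1,f_2\right)$ violating $\left(\mathcal{H}_{freq}\right)$, tuned so that a stable segregated periodic coexistence state exists; the obstruction invoked in (B) then forbids a genuine single pulsating front from $E_1$ to $E_2$, leaving only a terrace. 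This second mechanism is precisely the one consistent with the strong reading of (B), and whether it genuinely occurs is exactly the open question.
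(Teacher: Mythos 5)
First, a point of order: the statement you are proving is explicitly a \emph{conjecture} in this paper. The author writes ``We did not address this question but hopefully others will,'' so there is no proof in the paper to compare against, and what you have written is a research programme rather than a proof. Judged as such, only your part (A) is actually complete: taking $f_{1},f_{2}$ constant in $x$ and declaring the period $L$ large enough to violate $\left(\mathcal{H}_{freq}\right)$ does produce, via Gardner/Kan-On, a bistable pulsating front (a genuine traveling wave, trivially $L$-periodic) in a regime where $\left(\mathcal{H}_{freq}\right)$ fails. This is a legitimate, if degenerate, disproof of necessity of $\left(\mathcal{H}_{freq}\right)$. Part (B), which is the real content of the first sentence of the conjecture, you do not prove, and you say so yourself. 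I would only add that your stated obstruction is too strong: a stable intermediate state ordered between the endpoints does \emph{not} block every order-preserving heteroclinic --- in the Fife--McLeod picture a single monotone front through a stable intermediate zero exists precisely when the two sub-speeds are ordered favorably, and it is exactly this mechanism that makes the conjecture plausible. So the difficulty in (B) is not to circumvent an absolute obstruction but to verify a speed-ordering condition after the bifurcation you describe; none of the compactness, continuation, or terrace-versus-single-front issues you list is resolved.

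The genuine error is in (C). Choosing $k$ small so that both extinction states are unstable does not exhibit parameters for which ``no such pulsating front exists'' in any non-vacuous sense: it simply leaves the bistable regime, so that ``no bistable front'' holds by definitional fiat. Worse, on the literal reading ``pulsating front connecting the two extinction states,'' your parameter set is not even a counterexample, since Yu and Zhao (cited in this very paper) prove that in the weak competition case monostable pulsating fronts connecting the two extinction states \emph{do} exist despite the intermediate coexistence state. The second sentence of the conjecture is clearly meant to assert nonexistence \emph{within the bistable regime} --- i.e.\ a wave-blocking or pinning phenomenon for large $k$ when $\left(\mathcal{H}_{freq}\right)$ fails and a stable segregated periodic coexistence state appears, in the spirit of the blocking results for scalar bistable equations in periodic media (Ding--Hamel--Zhao, Zlatos). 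Your own closing remark identifies this mechanism correctly, but then concedes that whether it occurs ``is exactly the open question.'' So (C), like (B), remains open, and the conjecture is not proved.
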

We point out that, according to the present work, any of the following
two conditions enforces that either $\left(\mathcal{H}_{freq}\right)$
is not satisfied or $k\leq k^{\star}$: 
\begin{itemize}
\item the existence of a stable periodic coexistence state;
\item the nonexistence of a bistable pulsating front solution.
\end{itemize}
Moreover, our work will show that, if $k>k^{\star}$, any stable periodic
coexistence state has the \textquotedblleft close to segregation\textquotedblright{}
form (which will be rigorously defined later on; roughly speaking,
\textquotedblleft close to segregation\textquotedblright{} periodic
coexistence states converge as $k\to+\infty$ to a non-trivial periodic
coexistence state satisfying $u_{1}u_{2}=0$). This important property
might be the starting point of a future work on the preceding conjecture.

\subsection{A few more preliminaries}

\subsubsection{Compact embeddings of Hölder spaces}

We recall a well-known result of functional analysis.
\begin{prop}
Let $\left(a,a'\right)\in\left(0,+\infty\right)^{2}$ and $n,n',\beta,\beta'$
such that $\left(a,a'\right)=\left(n+\beta,n'+\beta'\right)$, $n$
and $n'$ are non-negative integers and $\beta$ and $\beta'$ are
in $(0,1]$.

If $a\leq a'$, then the canonical embedding $i:\mathcal{C}^{n',\beta'}\left(C\right)\hookrightarrow\mathcal{C}^{n,\beta}\left(C\right)$
is continuous and compact. 
\end{prop}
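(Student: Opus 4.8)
The plan is the classical one: read continuity straight off the definitions of the two norms, using only that $C=(0,L)$ is bounded, and then get compactness from the Arzel\`a--Ascoli theorem together with an interpolation inequality for H\"older seminorms. Two elementary observations set the stage. First, $a\le a'$ forces $n\le n'$, with $\beta\le\beta'$ in the boundary case $n=n'$ (if $n>n'$ then $a\ge n'+1+\beta>n'+\beta'=a'$ since $\beta'\le1$). Second, every element of $\mathcal{C}^{n',\beta'}(C)$ or $\mathcal{C}^{n,\beta}(C)$ extends, together with its relevant derivatives, continuously to the compact interval $\overline{C}$, which is what makes Arzel\`a--Ascoli applicable.

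For continuity I would bound each term of $\|g\|_{\mathcal{C}^{n,\beta}(C)}=\sum_{j=0}^{n}\|g^{(j)}\|_{L^\infty(C)}+[g^{(n)}]_{\mathcal{C}^{0,\beta}(C)}$ by $\|g\|_{\mathcal{C}^{n',\beta'}(C)}$. The sup-norms of the derivatives up to order $n\le n'$ already appear in the target norm, so only the top seminorm needs an argument, which splits into two cases. If $n<n'$, then $g^{(n)}\in\mathcal{C}^1(\overline{C})$ is Lipschitz and $[g^{(n)}]_{\mathcal{C}^{0,\beta}(C)}\le L^{1-\beta}\|g^{(n+1)}\|_{L^\infty(C)}$. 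If $n=n'$ (so $\beta\le\beta'$), then $|x-y|^{-\beta}\le L^{\beta'-\beta}|x-y|^{-\beta'}$ for $x\neq y$ in $C$ gives $[g^{(n)}]_{\mathcal{C}^{0,\beta}(C)}\le L^{\beta'-\beta}[g^{(n)}]_{\mathcal{C}^{0,\beta'}(C)}$. In every case $\|g\|_{\mathcal{C}^{n,\beta}(C)}\le K\|g\|_{\mathcal{C}^{n',\beta'}(C)}$ with $K=K(L,n,n',\beta,\beta')$.

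For compactness I would take a bounded sequence $(g_m)_m$ in $\mathcal{C}^{n',\beta'}(C)$. Each family $(g_m^{(j)})_m$, $j\le n'$, is uniformly bounded on $\overline{C}$; moreover $(g_m^{(n')})_m$ is uniformly $\beta'$-H\"older and each $(g_m^{(j)})_m$ with $j<n'$ is uniformly Lipschitz, hence all of them are equicontinuous. Arzel\`a--Ascoli and a diagonal extraction then produce a subsequence (not relabelled) and a limit $h\in\mathcal{C}^{n'}(\overline{C})$ with $g_m^{(j)}\to h^{(j)}$ uniformly on $\overline{C}$ for every $j\le n'$. It remains to upgrade this to a Cauchy property in $\mathcal{C}^{n,\beta}(C)$. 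When $n<n'$ this is immediate: via $[\psi]_{\mathcal{C}^{0,\beta}(C)}\le L^{1-\beta}\|\psi'\|_{L^\infty(C)}$, all terms of $\|g_m-g_{m'}\|_{\mathcal{C}^{n,\beta}(C)}$ are dominated by sup-norms of derivatives up to order $n+1\le n'$, which all tend to $0$. When $n=n'$ (then necessarily $\beta<\beta'$, the substantive content being $a<a'$) I would invoke the interpolation inequality
\[
[\psi]_{\mathcal{C}^{0,\beta}(C)}\le 2\,\|\psi\|_{L^\infty(C)}^{\,1-\beta/\beta'}\,[\psi]_{\mathcal{C}^{0,\beta'}(C)}^{\,\beta/\beta'},
\]
proved by bounding $|\psi(x)-\psi(y)|\,|x-y|^{-\beta}$ by $2\|\psi\|_{L^\infty(C)}|x-y|^{-\beta}$ for $|x-y|$ above a threshold and by $[\psi]_{\mathcal{C}^{0,\beta'}(C)}|x-y|^{\beta'-\beta}$ below it, then optimizing the threshold; applied to $\psi=g_m^{(n')}-g_{m'}^{(n')}$, whose $\beta'$-seminorm stays bounded and whose sup-norm tends to $0$, it yields $[\psi]_{\mathcal{C}^{0,\beta}(C)}\to0$. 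Either way $(g_m)_m$ is Cauchy, hence convergent, in $\mathcal{C}^{n,\beta}(C)$, so $i$ sends bounded sets to relatively compact sets. (When $a=a'$, $i$ is the identity and only the continuity assertion carries content.)

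The step I expect to be the crux is precisely this H\"older interpolation estimate in the case $n=n'$: it is the only point where genuine analysis, rather than manipulation of the definitions plus Arzel\`a--Ascoli, is needed, and it is also what pins down where compactness can actually fail.
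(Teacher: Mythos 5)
Your proof is correct and is the standard argument (norm comparison for continuity, Arzel\`a--Ascoli plus the H\"older interpolation inequality $[\psi]_{\mathcal{C}^{0,\beta}}\le 2\|\psi\|_{L^\infty}^{1-\beta/\beta'}[\psi]_{\mathcal{C}^{0,\beta'}}^{\beta/\beta'}$ for compactness); the paper itself gives no proof, as it only recalls this as a well-known fact, so there is no alternative argument to compare against. Your parenthetical caveat is also the right one: compactness genuinely requires $a<a'$ (for $a=a'$ the embedding is the identity, which is not compact in infinite dimensions), and this is consistent with how the paper actually uses the result, namely $\mathcal{C}^{n,\nicefrac{1}{2}}\hookrightarrow\mathcal{C}^{n,\beta}$ with $\beta<\tfrac{1}{2}$.
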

It will be clear later on that this problem naturally involves uniform
bounds in $\mathcal{C}^{0,\nicefrac{1}{2}}$ and in $\mathcal{C}^{2,\nicefrac{1}{2}}$.
Therefore, we fix once and for all $\beta\in\left(0,\frac{1}{2}\right)$
and we will use systematically the compact embeddings $\mathcal{C}^{n,\nicefrac{1}{2}}\hookrightarrow\mathcal{C}^{n,\beta}$,
meaning that uniform bounds in $\mathcal{C}^{n,\nicefrac{1}{2}}$
yield relative compactness in $\mathcal{C}^{n,\beta}$.

\subsubsection{Existence and uniqueness for the evolution system}
\begin{prop}
Let $k>0.$ Equipped with an initial non-negative condition $\left(u_{1,0},u_{2,0}\right)\in\mathcal{C}^{0,\nicefrac{1}{2}}\left(\mathbb{R},\mathbb{R}^{2}\right)$,
the problem $\left(\mathcal{P}\right)$ is well-posed: there exists
a unique non-negative entire solution $\left(u_{1},u_{2}\right)\in\mathcal{C}^{1,\nicefrac{1}{4}}\left([0,+\infty),\mathcal{C}^{2,\nicefrac{1}{2}}\left(\mathbb{R},\mathbb{R}^{2}\right)\right)$. 

Furthermore, if $\left(u_{1,0},u_{2,0}\right)>0$, then $\left(u_{1},u_{2}\right)\gg0$,
and if $\left(u_{1,0},u_{2,0}\right)\in\mathcal{C}_{per}\left(\mathbb{R},\mathbb{R}^{2}\right)$,
then $\left(u_{1},u_{2}\right)\in\mathcal{C}^{1}\left([0,+\infty),\mathcal{C}_{per}^{2}\left(\mathbb{R},\mathbb{R}^{2}\right)\right)$.
\end{prop}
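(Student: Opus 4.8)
The plan is to follow the classical route for semilinear parabolic problems — local solvability by a fixed point, globalisation by comparison, regularity by bootstrap — the only delicate points being the unboundedness of $\mathbb{R}$ and the fact that $f_i$ is merely $\mathcal{C}^{0,\nicefrac{1}{2}}$ in $x$. First I would extend each $f_i$ off $[0,+\infty)\times\mathbb{R}$ to some $\tilde f_i$ on $\mathbb{R}^2$, keeping it $\mathcal{C}^1$ in the first variable with bounded derivative, keeping it $L$-periodic and Hölder-$\nicefrac{1}{2}$ in the second variable, and agreeing with $f_i$ for $u\geq0$ (for instance $\tilde f_i(u,\cdot)=f_i(0,\cdot)$ when $u\leq0$); this is harmless since the solution produced will turn out to be non-negative. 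Writing the modified system in Duhamel form with the heat semigroups $e^{t\partial_{xx}}$, $e^{td\partial_{xx}}$ on the Banach space of bounded uniformly continuous functions $\mathbb{R}\to\mathbb{R}^2$ — on which the reaction map $(v_1,v_2)\mapsto(v_1\tilde f_1[v_1]-kv_1v_2,\,v_2\tilde f_2[v_2]-\alpha kv_1v_2)$ is locally Lipschitz (only the Lipschitz character of $\tilde f_i$ in its first variable enters, so the weakness of $(\mathcal{H}_1)$ in $x$ is no obstruction here) — a contraction-mapping argument on a short time interval yields a unique local mild solution, together with the usual blow-up alternative.

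Next I would establish non-negativity and uniform-in-time bounds. Writing $\tilde f_i[u_i]=\tilde f_i(0,\cdot)+u_i\int_0^1\partial_1\tilde f_i(su_i,\cdot)\,\mathrm{d}s$, each $u_i$ solves a linear equation $\partial_tu_i-d_i\partial_{xx}u_i=c_i(t,x)u_i$ with $c_i$ bounded (here the $L$-periodicity of $f_i$ in $x$ and the boundedness of $u_1,u_2$ are used); since $u_{i,0}\geq0$, the maximum principle on $\mathbb{R}$ for bounded solutions gives $u_i\geq0$, so that $\tilde f_i[u_i]=f_i[u_i]$ and $(u_1,u_2)$ genuinely solves $(\mathcal{P})$. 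Because $u_1u_2\geq0$, $u_1$ is then a bounded subsolution of the scalar equation $\partial_tw=\partial_{xx}w+wf_1[w]$, for which $(\mathcal{H}_3)$ makes the constant $\max(\|u_{1,0}\|_{L^\infty},a_1)$ a supersolution; the scalar comparison principle therefore bounds $u_1$ by this constant for all $t\geq0$, and symmetrically for $u_2$. These bounds being uniform in $t$, the blow-up alternative forces existence on all of $[0,+\infty)$.

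With the uniform $L^\infty$ bounds available, a standard bootstrap with interior parabolic estimates (Schauder estimates on unit space–time cylinders, hence with constants uniform in $x$) first makes $(u_1,u_2)$ Hölder and then Lipschitz in $x$, at which point the right-hand sides lie in a parabolic Hölder space with spatial exponent $\nicefrac{1}{2}$ — this uses exactly, and only, the $\mathcal{C}^{0,\nicefrac{1}{2}}$ regularity of $f_i$ in $x$, so $\nicefrac{1}{2}$ is the best one can expect — and one further Schauder step yields $(u_1,u_2)\in\mathcal{C}^{1,\nicefrac{1}{4}}([0,+\infty),\mathcal{C}^{2,\nicefrac{1}{2}}(\mathbb{R},\mathbb{R}^2))$, a classical entire solution. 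Strong positivity when $(u_{1,0},u_{2,0})>0$ follows from the \emph{strong} parabolic maximum principle applied to each of the linear equations $\partial_tu_i-d_i\partial_{xx}u_i-c_iu_i=0$: a non-negative solution not identically zero at $t=0$ is everywhere positive for $t>0$, hence $(u_1,u_2)\gg0$ on $(0,+\infty)\times\mathbb{R}$. Finally, if $(u_{1,0},u_{2,0})$ is $L$-periodic, then $(t,x)\mapsto(u_1(t,x+L),u_2(t,x+L))$ solves $(\mathcal{P})$ with the same datum (the coefficients being $L$-periodic in $x$), so uniqueness forces it to equal $(u_1,u_2)$; periodicity is thus propagated, and restricting the regularity to the compact cell $\overline{C}$ gives $(u_1,u_2)\in\mathcal{C}^1([0,+\infty),\mathcal{C}_{per}^2(\mathbb{R},\mathbb{R}^2))$.

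The step I expect to require the most care is the passage from local to global existence: one must invoke the parabolic maximum and comparison principles on the unbounded line, where they hold only for bounded sub/supersolutions, and one must make sure that $(\mathcal{H}_3)$ provides a bound on $\|u_i(t)\|_{L^\infty}$ that does not grow with $t$, since exactly this uniformity is what both precludes blow-up and will later be reused in the compactness arguments of the rest of the paper. The remainder is routine parabolic theory, and the spatial regularity exponent $\nicefrac{1}{2}$ is simply inherited from $(\mathcal{H}_1)$.
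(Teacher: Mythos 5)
Your argument is correct in substance, but it takes a genuinely different route from the one the paper sketches. The paper's remark proves existence by truncating the problem to the bounded intervals $\left(-n,n\right)$ with Dirichlet boundary conditions, invoking Pao's super- and sub-solution (monotone iteration) theorem for competitive systems there, and then passing to the limit $n\to+\infty$ by interior parabolic estimates, in the spirit of Berestycki--Hamel--Roques; that approach exploits the quasi-monotone (competitive) structure of the system and produces ordered barriers that are reused later. You instead work directly on the whole line: extension of $f_{i}$ to negative values of $u$, Duhamel formulation with the heat semigroups on bounded uniformly continuous functions, contraction mapping for local existence and uniqueness, the Phragm\'en--Lindel\"of-type maximum principle for bounded solutions to recover non-negativity (hence consistency with the unmodified $f_{i}$), comparison of each component with the scalar logistic equation and the constant supersolution $\max\left(\|u_{i,0}\|_{L^{\infty}},a_{i}\right)$ (admissible since $f_{i}\left(a_{i},\cdot\right)\leq0$ by $\left(\mathcal{H}_{3}\right)$ and continuity) to get uniform-in-time bounds and globality, Schauder bootstrap for the $\mathcal{C}^{2,1/2}$ spatial regularity, the strong maximum principle for $\left(u_{1},u_{2}\right)\gg0$, and translation invariance plus uniqueness for periodicity. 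Your route buys independence from the monotone structure (only local Lipschitz continuity of the reaction in $u$ is used) and avoids the exhaustion-and-extraction step; the paper's route buys, at no extra cost, the comparison framework for competitive systems that underlies the rest of the analysis (the transformation to $\left(\mathcal{M}\right)$, the corollary on invariance of the order interval $\left[\left(0,0\right),\left(\tilde{u}_{1},\tilde{u}_{2}\right)\right]$, and the monotonicity of the Poincar\'e map $Q_{t}$).

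Two small caveats, neither fatal. First, when you write the right-hand side as $c_{i}u_{i}$ with $c_{i}$ bounded in order to apply the maximum principle, you should apply it on the maximal local interval before globality is known, and you need the mild solution to be classical for $t>0$ (a preliminary bootstrap), which your ordering of steps leaves slightly implicit. Second, the regularity class $\mathcal{C}^{1,1/4}\left([0,+\infty),\mathcal{C}^{2,1/2}\left(\mathbb{R},\mathbb{R}^{2}\right)\right)$ cannot hold up to $t=0$ for data that are merely $\mathcal{C}^{0,1/2}$; your bootstrap yields it on $(0,+\infty)$ with continuity in $L^{\infty}$ at $t=0$, which is the reasonable reading of the statement and all that the rest of the paper uses.
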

\begin{rem*}
We do not give a fully detailed proof of this statement. Ideas similar
to those given in Berestycki\textendash Hamel\textendash Roques \cite[Remark 2.7]{Berestycki_Ham_1}
suffice. The existence of solutions for the truncated system in $\left(-n,n\right)$
with Dirichlet boundary conditions can be proved with Pao\textquoteright s
super- and sub-solutions theorem for competitive systems \cite{Pao_1981}. 
\end{rem*}

\subsubsection{Extinction states\label{subsec:Extinction-states}}
\begin{lem}
The periodic principal eigenvalues of $-\frac{\mbox{d}^{2}}{\mbox{d}x^{2}}-f_{1}\left[0\right]$
and $-d\frac{\mbox{d}^{2}}{\mbox{d}x^{2}}-f_{2}\left[0\right]$ are
negative.
\end{lem}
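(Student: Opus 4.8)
The plan is to exploit the fact that, carrying no drift term, both operators are self-adjoint on $L^{2}_{per}$, so that their periodic principal eigenvalue is a minimum of Rayleigh quotients, and then to test that minimum against the crude competitor $\varphi\equiv 1$. For a periodic potential $c\in\mathcal{C}^{0,\nicefrac{1}{2}}_{per}\left(\mathbb{R}\right)$ and a diffusion rate $D>0$, self-adjointness of $-D\frac{\mbox{d}^{2}}{\mbox{d}x^{2}}-c$ makes its periodic principal eigenvalue coincide with the bottom of its spectrum, which gives the variational formula
\[
\lambda_{1,per}\left(-D\tfrac{\mbox{d}^{2}}{\mbox{d}x^{2}}-c\right)=\min_{\varphi\in H^{1}_{per}\left(\mathbb{R}\right)\setminus\left\{0\right\}}\frac{\displaystyle\int_{C}\left(D\left(\varphi'\right)^{2}-c\,\varphi^{2}\right)\,\mbox{d}x}{\displaystyle\int_{C}\varphi^{2}\,\mbox{d}x}.
\]
Plugging in the admissible function $\varphi\equiv 1$ annihilates the gradient term, so $\lambda_{1,per}\left(-D\frac{\mbox{d}^{2}}{\mbox{d}x^{2}}-c\right)\leq-\tfrac{1}{L}\int_{C}c\,\mbox{d}x$. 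Specializing to $\left(D,c\right)=\left(1,f_{1}\left[0\right]\right)$ and to $\left(D,c\right)=\left(d,f_{2}\left[0\right]\right)$ and using $\left(\mathcal{H}_{2}\right)$ in the form $f_{i}\left[0\right]\geq m_{i}$ on $\overline{C}$ (equivalently the optimality convention $m_{i}=\min_{\overline{C}}f_{i}\left[0\right]>0$ from the remark), the right-hand side is at most $-m_{i}<0$, which is exactly the assertion.

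An equivalent and perhaps more idiomatic route, using only the characterization of $\lambda_{1,per}$ quoted from \cite{Berestycki_Ham_1} in the preliminaries, is the following. The constant function $1$ is periodic, positive, normalized and satisfies $\left(\frac{\mbox{d}^{2}}{\mbox{d}x^{2}}+f_{i}\left[0\right]\right)1=f_{i}\left[0\right]\geq m_{i}\cdot 1$, hence it is a positive periodic supersolution of the eigenvalue problem with parameter $-m_{i}$; the comparison property of the periodic principal eigenvalue with respect to such supersolutions --- equivalently, its monotonicity in the zeroth-order coefficient together with the trivial identity $\lambda_{1,per}\left(-D\frac{\mbox{d}^{2}}{\mbox{d}x^{2}}-m_{i}\right)=-m_{i}$, for which the constant $1$ is an eigenfunction --- yields $\lambda_{1,per}\left(-D\frac{\mbox{d}^{2}}{\mbox{d}x^{2}}-f_{i}\left[0\right]\right)\leq-m_{i}<0$.

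I do not expect any genuine obstacle in this lemma: it is precisely where $\left(\mathcal{H}_{2}\right)$ enters, and the argument collapses to one line once the right variational (or comparison) characterization of the periodic principal eigenvalue is invoked. The only point requiring a modicum of care is the sign convention, namely that $-D\frac{\mbox{d}^{2}}{\mbox{d}x^{2}}-f_{i}\left[0\right]$ must be read as \textquotedblleft $-\mathcal{L}$\textquotedblright{} with $\mathcal{L}=D\frac{\mbox{d}^{2}}{\mbox{d}x^{2}}+f_{i}\left[0\right]$, so that a positive potential $f_{i}\left[0\right]$ pushes the periodic principal eigenvalue strictly below $0$.
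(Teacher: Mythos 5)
Your proposal is correct, and your second argument (monotonicity of the periodic principal eigenvalue in the zeroth-order coefficient, combined with $\lambda_{1,per}\left(-D\frac{\mathrm{d}^{2}}{\mathrm{d}x^{2}}-m_{i}\right)=-m_{i}$ via the constant eigenfunction) is exactly the paper's proof. The Rayleigh-quotient variant with test function $\varphi\equiv 1$ is a legitimate equivalent here since the operators carry no drift, but it buys nothing beyond the one-line comparison argument.
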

\begin{proof}
This follows from $\left(\mathcal{H}_{2}\right)$ and the monotonicity
of the periodic principal eigenvalue with respect to the zeroth order
term of the elliptic operator. Indeed, for instance: 
\[
\lambda_{1,per}\left(-\frac{\mbox{d}^{2}}{\mbox{d}x^{2}}-f_{1}\left[0\right]\right)\leq\lambda_{1,per}\left(-\frac{\mbox{d}^{2}}{\mbox{d}x^{2}}-m_{1}\right)=-m_{1}<0.
\]
\end{proof}
From this lemma and hypotheses $\left(\mathcal{H}_{1}\right)$ and
$\left(\mathcal{H}_{3}\right)$, a fundamental result from Berestycki\textendash Hamel\textendash Roques
\cite{Berestycki_Ham_1} can be applied.
\begin{thm}
For any $\delta>0$ and any $i\in\left\{ 1,2\right\} $, the equation:
\[
-\delta z''=zf_{i}\left[z\right]
\]
admits a unique positive solution in $\mathcal{C}_{per}^{2}\left(\mathbb{R}\right)$.
\end{thm}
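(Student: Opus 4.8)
The plan is to recognize this statement as a direct application of the classical persistence theory for KPP-type scalar reaction--diffusion equations in periodic media (Berestycki--Hamel--Roques \cite{Berestycki_Ham_1}), so that the only work is to recast our equation in that framework and check the hypotheses. Fix $i\in\left\{ 1,2\right\} $ and $\delta>0$ and set $g_{i}:\left(s,x\right)\mapsto sf_{i}\left(s,x\right)$ for $s\geq0$, $x\in\mathbb{R}$. By $\left(\mathcal{H}_{1}\right)$, $g_{i}$ is $L$-periodic in $x$, of class $\mathcal{C}^{1}$ in $s$ up to $s=0$ and H\"older-continuous in $x$ with exponent $\geq\frac{1}{2}$; one has $g_{i}\left(0,\cdot\right)\equiv0$; by $\left(\mathcal{H}_{3}\right)$ the quotient $s\mapsto g_{i}\left(s,x\right)/s=f_{i}\left(s,x\right)$ is decreasing in $s$, which is exactly the KPP (sub-homogeneity) condition; and, again by $\left(\mathcal{H}_{3}\right)$, $g_{i}\left(s,x\right)<0$ for every $s>a_{i}$ and every $x$, which supplies the uniform a priori upper bound needed for the existence of a bounded positive solution. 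Since $\partial_{s}g_{i}\left(0,\cdot\right)=f_{i}\left[0\right]$, the linearization of $-\delta z''=g_{i}\left[z\right]$ at $z\equiv0$ is the operator $-\delta\frac{\mbox{d}^{2}}{\mbox{d}x^{2}}-f_{i}\left[0\right]$, whose periodic principal eigenvalue is, in the cited theory, precisely the threshold governing existence and (in conjunction with the KPP monotonicity) uniqueness of a positive periodic solution.

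It remains to check the sign of that eigenvalue. The previous lemma was phrased only for $\delta\in\left\{ 1,d\right\} $, but the argument does not use the value of $\delta$: by monotonicity of $\lambda_{1,per}$ with respect to the zeroth order coefficient and $\left(\mathcal{H}_{2}\right)$,
\[
\lambda_{1,per}\left(-\delta\frac{\mbox{d}^{2}}{\mbox{d}x^{2}}-f_{i}\left[0\right]\right)\leq\lambda_{1,per}\left(-\delta\frac{\mbox{d}^{2}}{\mbox{d}x^{2}}-m_{i}\right)=-m_{i}<0,
\]
the last equality because the constant function $1$ is a periodic positive principal eigenfunction of $-\delta\frac{\mbox{d}^{2}}{\mbox{d}x^{2}}-m_{i}$. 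Plugging this into the Berestycki--Hamel--Roques theorem then yields the unique positive solution in $\mathcal{C}_{per}^{2}\left(\mathbb{R}\right)$ and finishes the proof.

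I do not expect a genuine difficulty here; the only point deserving some care is to match $\left(\mathcal{H}_{1}\right)$--$\left(\mathcal{H}_{3}\right)$ to the exact assumptions of the cited reference (the precise H\"older class, whether the nonlinearity is required to be defined for negative arguments, and the strict versus non-strict form of the KPP monotonicity). If a self-contained argument is preferred, it is standard: for existence, run a monotone iteration started between the subsolution $\varepsilon\varphi$ --- with $\varphi\gg0$ the principal eigenfunction for the negative eigenvalue above and $\varepsilon>0$ small enough that $f_{i}\left[\varepsilon\varphi\right]-f_{i}\left[0\right]\geq\lambda_{1,per}$ --- and a large constant supersolution; for uniqueness, use a sliding/comparison argument exploiting the strict decrease of $s\mapsto f_{i}\left(s,x\right)$, which forbids two distinct ordered positive periodic solutions.
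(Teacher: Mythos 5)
Your proposal is correct and matches the paper's approach: the paper likewise treats this as a direct application of the Berestycki--Hamel--Roques result, justified by the preceding lemma on the negativity of the periodic principal eigenvalue together with hypotheses $\left(\mathcal{H}_{1}\right)$ and $\left(\mathcal{H}_{3}\right)$. Your extra check that the eigenvalue computation extends verbatim to arbitrary $\delta>0$ (the lemma being stated only for $\delta\in\left\{1,d\right\}$) is a sensible, if minor, refinement of what the paper leaves implicit.
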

Hereafter, $\tilde{u}_{1}$ and $\tilde{u}_{2}$ are the respective
unique positive periodic solutions of: 
\[
-z''=zf_{1}\left[z\right],
\]
\[
-dz''=zf_{2}\left[z\right].
\]

$\left(\tilde{u}_{1},0\right)$ and $\left(0,\tilde{u}_{2}\right)$
are indeed the extinction states of any $\left(\mathcal{P}_{k}\right)$. 

\subsubsection{Monotone evolution system}

One of the most important specificities of two-species competitive
systems is that, up to a slight transformation, they are monotone
systems. It is the key behind the results of Fang\textendash Zhao
\cite{Fang_Zhao_2011} and Weinberger \cite{Weinberger_200}. Let
us recall this transformation.
\begin{lem}
Let $J:z\mapsto\tilde{u}_{2}-z$, for any $z\in\mathcal{C}_{per}^{2}\left(\mathbb{R}\right)$
or $z\in\mathcal{C}^{1}\left([0,+\infty),\mathcal{C}_{per}^{2}\left(\mathbb{R}\right)\right)$
(with a slight abuse of notation). Let $k>k^{\star}$ and let $\left(u_{1},u_{2}\right)$
be a solution of $\left(\mathcal{P}\right)$ and $v_{2}=J\left(u_{2}\right)$. 

Then $\left(u_{1},v_{2}\right)$ satisfies the following cooperative
problem with periodicity conditions:
\[
\left\{ \begin{matrix}\partial_{t}u_{1}-\partial_{xx}u_{1}=u_{1}f_{1}\left[u_{1}\right]+ku_{1}\left(-\tilde{u}_{2}+v_{2}\right)\\
\partial_{t}v_{2}-d\partial_{xx}v_{2}=\tilde{u}_{2}f_{2}\left[\tilde{u}_{2}\right]-\left(\tilde{u}_{2}-v_{2}\right)f_{2}\left[\tilde{u}_{2}-v_{2}\right]+\alpha ku_{1}\left(\tilde{u}_{2}-v_{2}\right).
\end{matrix}\right.\quad\left(\mathcal{M}_{k}\right)
\]
\end{lem}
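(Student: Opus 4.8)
The plan is a direct substitution. The two facts that make it work are that $\tilde{u}_{2}$ is independent of the time variable and that it solves the ordinary differential equation $-d\tilde{u}_{2}''=\tilde{u}_{2}f_{2}[\tilde{u}_{2}]$; everything then follows by rewriting the two equations of $(\mathcal{P})$ in terms of the pair $(u_{1},v_{2})$, using $v_{2}=\tilde{u}_{2}-u_{2}$, equivalently $u_{2}=\tilde{u}_{2}-v_{2}$.

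First I would handle the equation for $u_{1}$, which is unchanged by the change of unknown except in its coupling term: substituting $u_{2}=\tilde{u}_{2}-v_{2}$ into $-ku_{1}u_{2}$ produces $ku_{1}(-\tilde{u}_{2}+v_{2})$, that is, the first line of $(\mathcal{M}_{k})$. Then I would handle the equation for $v_{2}$. Since $\partial_{t}v_{2}=-\partial_{t}u_{2}$ and $\partial_{xx}v_{2}=\tilde{u}_{2}''-\partial_{xx}u_{2}$, one has
\[
\partial_{t}v_{2}-d\partial_{xx}v_{2}=-\left(\partial_{t}u_{2}-d\partial_{xx}u_{2}\right)-d\tilde{u}_{2}''.
\]
Plugging in $\partial_{t}u_{2}-d\partial_{xx}u_{2}=u_{2}f_{2}[u_{2}]-\alpha ku_{1}u_{2}$ from $(\mathcal{P})$ and $-d\tilde{u}_{2}''=\tilde{u}_{2}f_{2}[\tilde{u}_{2}]$ from the equation defining $\tilde{u}_{2}$, and then replacing every remaining $u_{2}$ by $\tilde{u}_{2}-v_{2}$, I obtain
\[
\partial_{t}v_{2}-d\partial_{xx}v_{2}=\tilde{u}_{2}f_{2}[\tilde{u}_{2}]-(\tilde{u}_{2}-v_{2})f_{2}[\tilde{u}_{2}-v_{2}]+\alpha ku_{1}(\tilde{u}_{2}-v_{2}),
\]
which is exactly the second line of $(\mathcal{M}_{k})$. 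Periodicity is preserved because $\tilde{u}_{2}$ is periodic, so $v_{2}=\tilde{u}_{2}-u_{2}$ is periodic whenever $u_{2}$ is; and the same computation with $\partial_{t}\equiv0$ handles the stationary case in $\mathcal{C}_{per}^{2}(\mathbb{R})$.

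It remains to record why $(\mathcal{M}_{k})$ deserves to be called cooperative, since this is really the point of the lemma. On the region where $u_{1}\geq0$ and $v_{2}\leq\tilde{u}_{2}$, which is exactly where the pair $(u_{1},\tilde{u}_{2}-v_{2})$ is non-negative and therefore contains all non-negative solutions of $(\mathcal{P})$, the partial derivative of the right-hand side of the $u_{1}$-equation with respect to $v_{2}$ equals $ku_{1}\geq0$, while the partial derivative of the right-hand side of the $v_{2}$-equation with respect to $u_{1}$ equals $\alpha k(\tilde{u}_{2}-v_{2})=\alpha ku_{2}\geq0$. There is no genuine difficulty here; the only thing requiring a little care is the bookkeeping of signs when going from $u_{2}$ to $v_{2}$, together with the fact that $\tilde{u}_{2}$ is time-independent so that no spurious term arises from a time derivative of $\tilde{u}_{2}$.
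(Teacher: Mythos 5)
Your direct substitution is correct and is exactly the intended argument: the paper states this transformation without proof precisely because it reduces to the computation you carry out, namely using that $\tilde{u}_{2}$ is time-independent and solves $-d\tilde{u}_{2}''=\tilde{u}_{2}f_{2}\left[\tilde{u}_{2}\right]$, then rewriting both equations in the variables $\left(u_{1},v_{2}\right)$. Your closing check of the off-diagonal signs on $\left[\left(0,0\right),\left(\tilde{u}_{1},\tilde{u}_{2}\right)\right]$ correctly justifies the word ``cooperative,'' so nothing is missing.
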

\begin{cor}
Any solution $\left(u_{1},u_{2}\right)$ of $\mathcal{\left(P\right)}$
with initial condition $\left(0,0\right)<\left(u_{1,0},u_{2,0}\right)<\left(\tilde{u}_{1},\tilde{u}_{2}\right)$
satisfies $\left(0,0\right)\ll\left(u_{1},u_{2}\right)\ll\left(\tilde{u}_{1},\tilde{u}_{2}\right)$.
\end{cor}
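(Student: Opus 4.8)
The plan is to establish the two strict inequalities separately. The lower one is essentially already available: from the hypothesis $u_{1,0}$ and $u_{2,0}$ are non-negative and non-zero, so $\left(u_{1,0},u_{2,0}\right)>0$ and the well-posedness proposition above gives $\left(u_{1},u_{2}\right)\gg0$ for $t>0$. If one prefers a self-contained argument, $u_{1}$ solves the linear parabolic equation $\partial_{t}u_{1}-\partial_{xx}u_{1}-\left(f_{1}\left[u_{1}\right]-ku_{2}\right)u_{1}=0$ whose zeroth-order coefficient is bounded (the solution is bounded and classical by that same proposition, $f_{1}$ is continuous and periodic in $x$), $u_{1}\geq0$, and $u_{1}\left(0,\cdot\right)$ is non-negative non-zero; the parabolic strong maximum principle yields $u_{1}\gg0$ for $t>0$, and likewise $u_{2}\gg0$. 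So only the upper bound requires work, and the guiding idea is that the competition term $-ku_{1}u_{2}\leq0$ only helps.

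\emph{Upper bound, non-strict form.} Since $u_{1}\geq0$ and $u_{2}\geq0$,
\[
\partial_{t}u_{1}-\partial_{xx}u_{1}=u_{1}f_{1}\left[u_{1}\right]-ku_{1}u_{2}\leq u_{1}f_{1}\left[u_{1}\right],
\]
so $u_{1}$ is a subsolution of the scalar equation $\partial_{t}w-\partial_{xx}w=wf_{1}\left[w\right]$, of which the time-independent function $\tilde{u}_{1}$ is a solution — hence a supersolution — by its defining ODE $-\tilde{u}_{1}''=\tilde{u}_{1}f_{1}\left[\tilde{u}_{1}\right]$; moreover $u_{1,0}\leq\tilde{u}_{1}$. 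The map $w\mapsto wf_{1}\left(w,x\right)$ is Lipschitz in $w$, uniformly in $x$, on the bounded range covered by $u_{1}$ and $\tilde{u}_{1}$ (by $\left(\mathcal{H}_{1}\right)$), so the scalar parabolic comparison principle — on the compact torus for periodic data, or on $\mathbb{R}$ for bounded functions — gives $u_{1}\leq\tilde{u}_{1}$ for all $t\geq0$. The same argument with $d$ and $f_{2}$ gives $u_{2}\leq\tilde{u}_{2}$.

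\emph{Strictness.} Put $\psi=\tilde{u}_{1}-u_{1}\geq0$ and write $\tilde{u}_{1}f_{1}\left[\tilde{u}_{1}\right]-u_{1}f_{1}\left[u_{1}\right]=c\,\psi$, where $c\left(t,x\right)$ is the difference quotient of $w\mapsto wf_{1}\left(w,x\right)$ between $u_{1}\left(t,x\right)$ and $\tilde{u}_{1}\left(x\right)$ — a bounded function, again by $\left(\mathcal{H}_{1}\right)$ and boundedness of the solution. Then
\[
\partial_{t}\psi-\partial_{xx}\psi-c\,\psi=ku_{1}u_{2}\geq0,
\]
with $\psi\left(0,\cdot\right)=\tilde{u}_{1}-u_{1,0}$ non-negative and non-zero (from $u_{1,0}<\tilde{u}_{1}$). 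The parabolic strong maximum principle forces $\psi\gg0$ for every $t>0$, i.e. $u_{1}\ll\tilde{u}_{1}$; similarly $u_{2}\ll\tilde{u}_{2}$. Combined with the lower bound, this is exactly $\left(0,0\right)\ll\left(u_{1},u_{2}\right)\ll\left(\tilde{u}_{1},\tilde{u}_{2}\right)$ for $t>0$.

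\emph{Main obstacle.} There is no deep difficulty here; the only point deserving care is the scalar comparison principle on the \emph{unbounded} line, but this is harmless since $\left(\mathcal{P}\right)$ is solved in $\mathcal{C}^{0,\nicefrac{1}{2}}\left(\mathbb{R}\right)$, so every function in sight is bounded and the reaction is globally Lipschitz on the relevant range, and for periodic data the statement is just the maximum principle on a bounded interval. An alternative route, closer to the monotone structure just recalled, avoids the scalar reductions altogether: the constant pair $\left(\left(0,0\right),\left(\tilde{u}_{1},\tilde{u}_{2}\right)\right)$ is an ordered sub--supersolution pair for the cooperative system $\left(\mathcal{M}_{k}\right)$ satisfied by $\left(u_{1},\tilde{u}_{2}-u_{2}\right)$ — the two quasimonotone differential inequalities reduce exactly to $u_{2}\geq0$ and $u_{1}\leq\tilde{u}_{1}$ — so Pao's comparison theorem for cooperative systems (already used for well-posedness) confines $\left(u_{1},\tilde{u}_{2}-u_{2}\right)$ to that rectangle, and strictness follows once more from the strong maximum principle. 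I would present the scalar route as the main proof, since it is self-contained.
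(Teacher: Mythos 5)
Your argument is correct, but note that the paper itself offers no separate proof: the statement is placed as a corollary of the lemma introducing $J$ and the cooperative system $\left(\mathcal{M}_{k}\right)$, so the intended proof is exactly what you sketch as your ``alternative route'' \textemdash{} $\left(0,0\right)$ and $\left(\tilde{u}_{1},\tilde{u}_{2}\right)$ are stationary states of $\left(\mathcal{M}\right)$, and the (strong) comparison principle for this monotone system traps $\left(u_{1},\tilde{u}_{2}-u_{2}\right)$ strictly inside the order interval. Your main, scalar route is a genuinely different and more elementary decomposition: you use only the sign of the coupling ($-ku_{1}u_{2}\leq0$ makes $u_{1}$ a subsolution of its own KPP equation, and symmetrically for $u_{2}$), scalar comparison against $\tilde{u}_{i}$, and the strong maximum principle applied to $\psi=\tilde{u}_{1}-u_{1}$; this avoids invoking the quasimonotone machinery at all and is self-contained, at the modest cost of treating the components separately and of the Phragm\'en\textendash Lindel\"of caveat on the unbounded line, which you correctly dispose of by boundedness. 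One small point deserves care: in the strictness step you justify $\psi\left(0,\cdot\right)\neq0$ by ``$u_{1,0}<\tilde{u}_{1}$'', which tacitly reads the hypothesis $\left(u_{1,0},u_{2,0}\right)<\left(\tilde{u}_{1},\tilde{u}_{2}\right)$ componentwise; under the paper's vector order one could a priori have $u_{1,0}\equiv\tilde{u}_{1}$ with $u_{2,0}<\tilde{u}_{2}$. This is harmless \textemdash{} the same componentwise reading is needed for the well-posedness proposition you quote for the lower bound, and even in the degenerate case strictness still follows from your own identity, since the source term $ku_{1}u_{2}$ is positive for $t>0$ and the strong maximum principle then forbids $\psi$ from vanishing at a later time \textemdash{} but it would be worth one sentence making that explicit.
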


\subsubsection{Segregated reaction terms}

As $k\to+\infty$, the following functions will naturally appear:

\[
\eta:\left(z,x\right)\mapsto f_{1}\left(\frac{z}{\alpha},x\right)z^{+}-\frac{1}{d}f_{2}\left(-\frac{z}{d},x\right)z^{-},
\]
\[
\gamma:\left(z,x\right)\mapsto f_{1}\left(0,x\right)z^{+}-\frac{1}{d}f_{2}\left(0,x\right)z^{-},
\]
 where $z^{+}=\max\left(z,0\right)$ and $z^{-}=-\min\left(z,0\right)$
so that $z=z^{+}-z^{-}$.

\subsubsection{Derivatives of the reaction terms}

We will denote $g_{i}$ the partial derivative of $\left(u,x\right)\mapsto uf_{i}\left(u,x\right)$
with respect to $u$:
\[
g_{i}:\left(u,x\right)\mapsto f_{i}\left(u,x\right)+u\partial_{1}f_{i}\left(u,x\right)\text{ for all }i\in\left\{ 1,2\right\} .
\]

\section{Existence of pulsating fronts}

\subsection{Aim: Fang\textendash Zhao\textquoteright s theorem}

We recall that, for any $k>k^{\star}$ and any $t>0$, the Poincaré\textquoteright s
map $Q_{t}$ associated with $\left(\mathcal{M}\right)$ is defined
as the operator: 
\[
Q_{t}:\mathcal{C}\left(\mathbb{R},\mathbb{R}^{2}\right)\cap\left[\left(0,0\right),\left(\tilde{u}_{1},\tilde{u}_{2}\right)\right]\to\mathcal{C}\left(\mathbb{R},\mathbb{R}^{2}\right)\cap\left[\left(0,0\right),\left(\tilde{u}_{1},\tilde{u}_{2}\right)\right]
\]
 which associates with some initial condition $\left(u_{1,0},v_{2,0}\right)$
the solution $\left(u_{1},v_{2}\right)$ of $\left(\mathcal{M}\right)$evaluated
at time $t>0$.

From Fang and Zhao \cite{Fang_Zhao_2011}, we know that $\left(\mathcal{M}\right)$
admits a pulsating front solution connecting $\left(\tilde{u}_{1},\tilde{u}_{2}\right)$
to $\left(0,0\right)$ if:
\begin{enumerate}
\item $\left(0,0\right)$ and $\left(\tilde{u}_{1},\tilde{u}_{2}\right)\gg\left(0,0\right)$
are locally asymptotically stable periodic stationary states of $\left(\mathcal{M}\right)$
and all intermediate periodic stationary states of $\left(\mathcal{M}\right)$
are unstable;
\item for any intermediate periodic stationary state $\left(u_{1},v_{2}\right)$,
the sum of the spreading speeds associated with front-like initial
data connecting respectively $\left(\tilde{u}_{1},\tilde{u}_{2}\right)$
to $\left(u_{1},v_{2}\right)$ and $\left(u_{1},v_{2}\right)$ to
$\left(0,0\right)$ is positive (notice that these sub-problems are
of monostable type);
\item and if, for any $t>0$, $Q_{t}$ satisfies the following hypotheses:
\begin{enumerate}
\item $Q_{t}$ is spatially periodic;
\item $Q_{t}$ is continuous with respect to the topology of the locally
uniform convergence;
\item $Q_{t}$ is strongly monotone, in the sense that if $\left(u_{1},v_{2}\right)>\left(u^{1},v^{2}\right)$,
then: 
\[
Q_{t}\left(\left(u_{1},v_{2}\right)\right)\gg Q_{t}\left(\left(u^{1},v^{2}\right)\right);
\]
\item $Q_{t}$ is compact with respect to the topology of the locally uniform
convergence;
\end{enumerate}
\end{enumerate}
It is quite standard to check that the last four hypotheses are indeed
satisfied. The verification of the first two, on the contrary, is
the object of the remaining of this paper.

\subsection{Stability of all extinction states}
\begin{prop}
Provided $k^{\star}$ is large enough, $\left(\tilde{u}_{1},0\right)$
and $\left(0,\tilde{u}_{2}\right)$ are locally asymptotically stable. 
\end{prop}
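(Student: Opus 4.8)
The plan is to linearize $\left(\mathcal{P}\right)$ about each extinction state and to show that the resulting linear operator has negative spectral bound; since the competition terms $-ku_1u_2$ and $-\alpha ku_1u_2$ are quadratic, hence of higher order near a stationary state having a vanishing component, this gives local asymptotic stability by standard parabolic theory. The feature that makes this elementary is that, an extinction state having one null component, its linearized system is \emph{triangular}: the spectrum of the linearization is the union of the spectra of the two diagonal scalar elliptic operators, so the matter reduces to two periodic-principal-eigenvalue inequalities, one holding for every $k$ and one only for $k$ large.

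Take $\left(\tilde{u}_1,0\right)$. Writing $u_1=\tilde{u}_1+v_1$, $u_2=v_2$ and discarding quadratic terms, the linearized evolution is
\[
\left\{\begin{matrix}
\partial_t v_1=\partial_{xx}v_1+g_1\left[\tilde{u}_1\right]v_1-k\tilde{u}_1v_2\\
\partial_t v_2=d\,\partial_{xx}v_2+\left(f_2\left[0\right]-\alpha k\tilde{u}_1\right)v_2 .
\end{matrix}\right.
\]
The $v_2$-equation being autonomous, it suffices to prove
\[
\lambda_{1,per}\left(-\frac{\mathrm{d}^2}{\mathrm{d}x^2}-g_1\left[\tilde{u}_1\right]\right)>0
\quad\text{and}\quad
\lambda_{1,per}\left(-d\frac{\mathrm{d}^2}{\mathrm{d}x^2}-f_2\left[0\right]+\alpha k\tilde{u}_1\right)>0 .
\]
The first inequality is the linear stability of the positive periodic steady state of the scalar logistic equation: since $\tilde{u}_1\gg0$ and $-\tilde{u}_1''=\tilde{u}_1 f_1\left[\tilde{u}_1\right]$, the function $\tilde{u}_1$ is a positive periodic eigenfunction of $-\frac{\mathrm{d}^2}{\mathrm{d}x^2}-f_1\left[\tilde{u}_1\right]$ for the eigenvalue $0$, so $\lambda_{1,per}\left(-\frac{\mathrm{d}^2}{\mathrm{d}x^2}-f_1\left[\tilde{u}_1\right]\right)=0$ by uniqueness; as $\left(\mathcal{H}_3\right)$ yields $g_1\left[\tilde{u}_1\right]=f_1\left[\tilde{u}_1\right]+\tilde{u}_1\partial_1 f_1\left[\tilde{u}_1\right]\leq f_1\left[\tilde{u}_1\right]$ with strict inequality somewhere, the strict monotonicity of $\lambda_{1,per}$ in the zeroth-order coefficient gives the claim, uniformly in $k$. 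For the second inequality I would use that same monotonicity in the opposite direction: $\min_{\overline{C}}\tilde{u}_1>0$ and $f_2\left[0\right]\leq M_2$, so comparison with a constant-coefficient operator gives
\[
\lambda_{1,per}\left(-d\frac{\mathrm{d}^2}{\mathrm{d}x^2}-f_2\left[0\right]+\alpha k\tilde{u}_1\right)\geq-M_2+\alpha k\min_{\overline{C}}\tilde{u}_1\longrightarrow+\infty\quad(k\to+\infty),
\]
which is positive once $k^{\star}$ is enlarged suitably.

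The state $\left(0,\tilde{u}_2\right)$ is handled by the mirror argument: the linearization is again triangular, now with autonomous first equation $\partial_t v_1=\partial_{xx}v_1+\left(f_1\left[0\right]-k\tilde{u}_2\right)v_1$ and diagonal second operator $-d\frac{\mathrm{d}^2}{\mathrm{d}x^2}-g_2\left[\tilde{u}_2\right]$; the eigenvalue of the latter is positive for every $k$ exactly as before (using $-d\tilde{u}_2''=\tilde{u}_2 f_2\left[\tilde{u}_2\right]$ and $\partial_1 f_2\leq0$), while $\lambda_{1,per}\left(-\frac{\mathrm{d}^2}{\mathrm{d}x^2}-f_1\left[0\right]+k\tilde{u}_2\right)\geq-M_1+k\min_{\overline{C}}\tilde{u}_2\longrightarrow+\infty$, positive for $k$ large. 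Combining the inequalities in each case, the triangular linearization about each extinction state has negative spectral bound, hence both extinction states are locally asymptotically stable provided $k^{\star}$ is large enough.

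I do not expect a genuine obstacle. The only two points deserving a line of care are the strict inequality in the $g_1\left[\tilde{u}_1\right]$-eigenvalue bound (one uses that $\partial_1 f_1$ does not vanish identically along $u=\tilde{u}_1(x)$, which follows from $\left(\mathcal{H}_3\right)$) and the passage from ``negative spectral bound of the triangular linearization'' to local asymptotic stability of the nonlinear periodic stationary state in the relevant topology. The latter is classical — the analytic semigroup of a triangular elliptic operator decomposes blockwise and inherits the exponential decay of its diagonal parts, the quadratic competition terms being an admissible higher-order perturbation — and, if one prefers to remain inside the monotone-semiflow framework of $\left(\mathcal{M}\right)$, one may instead build an ordered contracting pair of sub- and supersolutions from the relevant principal eigenfunctions and conclude by the parabolic comparison principle.
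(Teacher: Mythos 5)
Your proposal is correct and follows essentially the paper's own route: linearize at each extinction state, exploit the triangular structure to reduce to two periodic principal eigenvalues, get positivity of the "own-species" eigenvalue from $\lambda_{1,per}\left(-\frac{\mathrm{d}^2}{\mathrm{d}x^2}-f_1\left[\tilde{u}_1\right]\right)=0$ together with $(\mathcal{H}_3)$ and strict monotonicity, and get positivity of the "competitor" eigenvalue for $k$ large (you compare with the constant $-M_2+\alpha k\min_{\overline{C}}\tilde{u}_1$, the paper compares with $f_2\left[\tilde{u}_2\right]$, an immaterial difference). The only cosmetic divergence is that the paper phrases the final step via Mora's theorem applied to the linearization of the cooperative system $(\mathcal{M})$, while you invoke standard linearized stability for $(\mathcal{P})$ directly; the eigenvalue computations are the same.
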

\begin{rem*}
For the case $k=1$, the proof of the local asymptotic stability of
the extinction states was done by Dockery and his coauthors \cite{Dockery_1998}
with the help of Mora\textquoteright s theorem \cite{Mora_1983}.
It works here too with a very slight adaptation; we give the proof
for the sake of completeness.
\end{rem*}
\begin{proof}
Thanks to Mora\textquoteright s theorem \cite{Mora_1983}, we know
that $\left(\tilde{u}_{1},0\right)$ is asymptotically stable if the
periodic principal eigenvalue of the elliptic part of the monotone
problem $\left(\mathcal{M}\right)$ linearized at $\left(\tilde{u}_{1},\tilde{u}_{2}\right)=\left(u,J\left(0\right)\right)$
is positive. Therefore we consider the differential operator $\mathcal{A}_{\left(\tilde{u}_{1},0\right)}:\mathcal{C}_{per}^{2}\left(\mathbb{R}\right)\to\mathcal{C}_{per}\left(\mathbb{R}\right)$
defined as:
\[
\mathcal{A}_{\left(\tilde{u}_{1},0\right)}=\left(\begin{matrix}\frac{\mbox{d}^{2}}{\mbox{d}x^{2}}+g_{1}\left[\tilde{u}_{1}\right] & k\tilde{u}_{1}\\
0 & d\frac{\mbox{d}^{2}}{\mbox{d}x^{2}}+f_{2}\left[0\right]-\alpha k\tilde{u}_{1}
\end{matrix}\right)
\]

From the special \textquotedblleft triangular\textquotedblright{}
form of $\mathcal{A}_{\left(\tilde{u}_{1},0\right)}$, it is clear
that: 
\[
\min\left(\mbox{sp}\left(-\mathcal{A}_{\left(\tilde{u}_{1},0\right)}\right)\right)=\min\left(\lambda_{1,per}\left(-\frac{\mbox{d}^{2}}{\mbox{d}x^{2}}-g_{1}\left[\tilde{u}_{1}\right]\right),\lambda_{1,per}\left(-d\frac{\mbox{d}^{2}}{\mbox{d}x^{2}}-\left(f_{2}\left[0\right]-\alpha k\tilde{u}_{1}\right)\right)\right).
\]

By monotonicity of the periodic principal eigenvalue and $\left(\mathcal{H}_{3}\right)$,
we obtain: 
\[
\lambda_{1,per}\left(-\frac{\mbox{d}^{2}}{\mbox{d}x^{2}}-g_{1}\left[\tilde{u}_{1}\right]\right)>\lambda_{1,per}\left(-\frac{\mbox{d}^{2}}{\mbox{d}x^{2}}-f_{1}\left[\tilde{u}_{1}\right]\right).
\]

For any $k$ large enough, $f_{2}\left[0\right]-\alpha k\tilde{u}_{1}<f_{2}\left[\tilde{u}_{2}\right]$
holds, so that:
\[
\lambda_{1,per}\left(-d\frac{\mbox{d}^{2}}{\mbox{d}x^{2}}-\left(f_{2}\left[0\right]-\alpha k\tilde{u}_{1}\right)\right)>\lambda_{1,per}\left(-d\frac{\mbox{d}^{2}}{\mbox{d}x^{2}}-f_{2}\left[\tilde{u}_{2}\right]\right).
\]

Moreover, from the equation solved by $\tilde{u}_{1}$, $\tilde{u}_{1}$
is actually an eigenfunction for the following eigenvalue: 
\[
\lambda_{1,per}\left(-\frac{\mbox{d}^{2}}{\mbox{d}x^{2}}-f_{1}\left[\tilde{u}_{1}\right]\right)=0.
\]
Similarly, 
\[
\lambda_{1,per}\left(-d\frac{\mbox{d}^{2}}{\mbox{d}x^{2}}-f_{2}\left[\tilde{u}_{2}\right]\right)=0.
\]

Thus:
\[
\lambda_{1,per}\left(-\mathcal{A}_{\left(\tilde{u}_{1},0\right)}\right)>0.
\]

The same proof holds for $\left(0,\tilde{u}_{2}\right)$.
\end{proof}

\subsection{Instability of all periodic coexistence states}

In this subsection, we prove that $\left(\mathcal{M}\right)$ admits
no stable periodic stationary states in $\left\langle \left(0,0\right),\left(\tilde{u}_{1},\tilde{u}_{2}\right)\right\rangle $.

For any $k>k^{\star}$, let: 
\[
S_{k}\subset\mathcal{C}_{per}^{2}\left(\mathbb{R},\mathbb{R}^{2}\right)
\]
 be the set of periodic solutions of the following problem: 
\[
\left\{ \begin{matrix}-u_{1}''=u_{1}f_{1}\left[u_{1}\right]-ku_{1}u_{2}\\
-du_{2}''=u_{2}f_{2}\left[u_{2}\right]-\alpha ku_{1}u_{2}\\
u_{1}\in\left\langle 0,\tilde{u}_{1}\right\rangle \\
u_{2}\in\left\langle 0,\tilde{u}_{2}\right\rangle .
\end{matrix}\right.
\]

Any $\left(u_{1},u_{2}\right)\in S$ is a periodic coexistence state. 

\subsubsection{Basic properties of periodic coexistence states}
\begin{lem}
\label{lem:max_principles_coexistence_states} Let $k>k^{\star}$.
Any $\left(u_{1},u_{2}\right)\in S$ satisfies:
\[
\left\{ \begin{matrix}k\min u_{2}\leq\max f_{1}\left[\max u_{1}\right]\\
\alpha k\min u_{1}\leq\max f_{2}\left[\max u_{2}\right]\\
\min f_{1}\left[\min u_{1}\right]\leq k\max u_{2}\\
\min f_{2}\left[\min u_{2}\right]\leq\alpha k\max u_{1},
\end{matrix}\right.
\]

each extrema being implicitly over $\overline{C}$.
\end{lem}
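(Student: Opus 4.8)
The plan is to apply the elliptic maximum principle to each scalar equation of the stationary system, once at a maximum point and once at a minimum point of each component. Recall that, by definition of $S$, any $\left(u_{1},u_{2}\right)\in S$ lies in $\mathcal{C}_{per}^{2}\left(\mathbb{R},\mathbb{R}^{2}\right)$, satisfies $u_{1}\gg0$ and $u_{2}\gg0$ (because $u_{i}\in\left\langle 0,\tilde{u}_{i}\right\rangle $), and solves
\[
-u_{1}''=u_{1}\left(f_{1}\left[u_{1}\right]-ku_{2}\right),\qquad-du_{2}''=u_{2}\left(f_{2}\left[u_{2}\right]-\alpha ku_{1}\right).
\]
By periodicity and $\mathcal{C}^{2}$ regularity, the extrema of $u_{1}$ and $u_{2}$ over $\overline{C}$ are global extrema over $\mathbb{R}$ and are attained at critical points, where the usual second-derivative sign test applies.

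First I would fix $x_{0}\in\overline{C}$ with $u_{1}\left(x_{0}\right)=\max u_{1}$. Then $u_{1}''\left(x_{0}\right)\leq0$, so $-u_{1}''\left(x_{0}\right)\geq0$, and dividing the first equation at $x_{0}$ by $u_{1}\left(x_{0}\right)>0$ gives $f_{1}\left(\max u_{1},x_{0}\right)-ku_{2}\left(x_{0}\right)\geq0$, whence
\[
k\min u_{2}\leq ku_{2}\left(x_{0}\right)\leq f_{1}\left(\max u_{1},x_{0}\right)\leq\max f_{1}\left[\max u_{1}\right],
\]
the first inequality. Symmetrically, choosing $x_{1}\in\overline{C}$ with $u_{1}\left(x_{1}\right)=\min u_{1}$ yields $u_{1}''\left(x_{1}\right)\geq0$, hence $f_{1}\left(\min u_{1},x_{1}\right)-ku_{2}\left(x_{1}\right)\leq0$, so
\[
\min f_{1}\left[\min u_{1}\right]\leq f_{1}\left(\min u_{1},x_{1}\right)\leq ku_{2}\left(x_{1}\right)\leq k\max u_{2},
\]
the third inequality.

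Running exactly the same argument on the second equation — at a maximum point and then at a minimum point of $u_{2}$, dividing by $u_{2}>0$ and using $du_{2}''\leq0$ (resp. $\geq0$) there — produces $\alpha k\min u_{1}\leq\max f_{2}\left[\max u_{2}\right]$ and $\min f_{2}\left[\min u_{2}\right]\leq\alpha k\max u_{1}$, which are the remaining two inequalities. There is no genuine obstacle here: the only points deserving care are that the strict positivity $u_{i}\gg0$ (guaranteed by $u_{i}\in\left\langle 0,\tilde{u}_{i}\right\rangle $) is what permits the division, and that periodicity upgrades the extrema over $\overline{C}$ to honest interior critical points; in particular neither the monotonicity in $\left(\mathcal{H}_{3}\right)$ nor $\left(\mathcal{H}_{freq}\right)$ is used.
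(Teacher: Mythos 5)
Your proof is correct and follows essentially the same route as the paper: evaluate the stationary equations at extremum points of $u_{1}$ and $u_{2}$ (which are global by periodicity), use the sign of the second derivative there, and divide by the positive value of the component to obtain each inequality. The paper only writes out the first inequality and notes the other three are analogous, so your fully spelled-out version adds nothing new but is accurate.
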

\begin{proof}
We only prove the first inequality, the three others being proved
similarly.

Let $\overline{x}\in\overline{C}$ such that $u_{1}\left(\overline{x}\right)=\max u_{1}$.
Since $u_{1}\in\mathcal{C}^{2}\left(\mathbb{R}\right)$, $u_{1}''\left(\overline{x}\right)\leq0$,
that is: 
\[
\max u_{1}f_{1}\left[\max u_{1}\right]\geq\max u_{1}ku_{2}\left(\overline{x}\right).
\]

Since $u_{1}>0$, we can divide by $\max u_{1}$. The claimed result
easily follows. 
\end{proof}
\begin{rem*}
This lemma will be used together with $m>0$ to prove that $ku_{1}$
and $ku_{2}$ stay non-zero as $k\to+\infty$. Thus, for the forthcoming
study, it is not sufficient to merely assume that $\lambda_{1,per}\left(-\frac{\mbox{d}^{2}}{\mbox{d}x^{2}}-f_{1}\left[0\right]\right)$
and $\lambda_{1,per}\left(-d\frac{\mbox{d}^{2}}{\mbox{d}x^{2}}-f_{2}\left[0\right]\right)$
are negative (as was done for instance by Dockery and his collaborators
\cite{Dockery_1998}).
\end{rem*}
\begin{prop}
\label{prop:limit_points_coexistence_states} As $k\to+\infty$, the
family $\left(S_{k}\right)_{k>k^{\star}}$ is relatively compact in
$\mathcal{C}_{per}^{0,\beta}\left(\mathbb{R},\mathbb{R}^{2}\right)$.
$\left(0,0\right)$ is one of its limit points. Any other limit point
$\left(u_{1,seg},u_{2,seg}\right)\in\mathcal{C}_{per}^{0,\beta}\left(\mathbb{R},\mathbb{R}^{2}\right)$
is called a periodic segregated state and is such that $\alpha u_{1,seg}-du_{2,seg}$
is a non-zero sign-changing solution in $\mathcal{C}_{per}^{2,\beta}\left(\mathbb{R}\right)$
of the following elliptic equation:
\[
-z''=\eta\left[z\right].
\]
\end{prop}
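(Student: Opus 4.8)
The plan is to run the usual spatial-segregation machinery, the twist being that here the singular coupling term happens to carry a favourable sign.

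First I would prove the relative compactness through a uniform-in-$k$ Lipschitz bound on $\bigcup_{k>k^{\star}}S_{k}$. Fix $\left(u_{1},u_{2}\right)\in S_{k}$. Since $ku_{1}u_{2}\geq0$ and, by $\left(\mathcal{H}_{3}\right)$, $u_{i}f_{i}\left[u_{i}\right]\leq M_{i}u_{i}\leq M_{i}\|\tilde{u}_{i}\|_{L^{\infty}\left(C\right)}$, the two equations give $u_{1}''=ku_{1}u_{2}-u_{1}f_{1}\left[u_{1}\right]\geq-C_{1}$ and $du_{2}''\geq-C_{2}$ with $C_{1},C_{2}$ independent of $k$. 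A one-sided bound on the second derivative together with $0\leq u_{i}\leq\|\tilde{u}_{i}\|_{L^{\infty}\left(C\right)}$ on $\mathbb{R}$ forces a Lipschitz bound: if $\phi''\geq-C$ and $0\leq\phi\leq A$ on $\mathbb{R}$ then, comparing $\phi$ at any point with the downward parabola tangent to it there, one gets $\left|\phi'\right|\leq\sqrt{2CA}$. Hence $\bigcup_{k>k^{\star}}S_{k}$ is bounded in $\mathcal{C}_{per}^{0,1}\left(\mathbb{R},\mathbb{R}^{2}\right)$, thus in $\mathcal{C}_{per}^{0,1/2}\left(\mathbb{R},\mathbb{R}^{2}\right)$, and the compact embedding $\mathcal{C}_{per}^{0,1/2}\hookrightarrow\mathcal{C}_{per}^{0,\beta}$ gives the claimed relative compactness. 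That $\left(0,0\right)$ is one of the limit points is the degenerate regime (coexistence states of size $O\left(1/k\right)$, obtained by the rescaling $v_{i}=ku_{i}$ and a perturbation argument on the limit system $-v_{1}''=v_{1}\left(f_{1}\left[0\right]-v_{2}\right)$, $-dv_{2}''=v_{2}\left(f_{2}\left[0\right]-\alpha v_{1}\right)$, explicitly solvable in the constant-coefficient case); I would treat it as the easy separate case and concentrate on the nontrivial limit points.

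So fix $\left(u_{1,k},u_{2,k}\right)\in S_{k}$ with $k\to+\infty$ converging in $\mathcal{C}_{per}^{0,\beta}$ to $\left(u_{1,\infty},u_{2,\infty}\right)$. Integrating the $u_{1}$-equation over one period kills $\int_{C}u_{1}''$ and gives $k\int_{C}u_{1}u_{2}=\int_{C}u_{1}f_{1}\left[u_{1}\right]\leq C$, so $\int_{C}u_{1,k}u_{2,k}\leq C/k\to0$; passing to the uniform limit yields $\int_{C}u_{1,\infty}u_{2,\infty}=0$, hence $u_{1,\infty}u_{2,\infty}\equiv0$. Writing $w_{k}:=\alpha u_{1,k}-du_{2,k}$ and $w_{\infty}:=\alpha u_{1,\infty}-du_{2,\infty}$, disjointness of the supports gives the pointwise identities $u_{1,\infty}=w_{\infty}^{+}/\alpha$ and $u_{2,\infty}=w_{\infty}^{-}/d$. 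Now $\alpha\times$(first equation)$-$(second equation) cancels the coupling terms, so $-w_{k}''=\alpha u_{1,k}f_{1}\left[u_{1,k}\right]-u_{2,k}f_{2}\left[u_{2,k}\right]=:F_{k}$, and $F_{k}$ is bounded in $\mathcal{C}_{per}^{0,1/2}$ uniformly in $k$ (by the Lipschitz bound above and the regularity of $f_{i}$); Schauder estimates then bound $w_{k}$ in $\mathcal{C}_{per}^{2,1/2}$, so $w_{\infty}\in\mathcal{C}_{per}^{2,\beta}\left(\mathbb{R}\right)$ and, passing to the limit in the equation (using $u_{i,k}\to u_{i,\infty}$ uniformly and the continuity of $f_{i}$), $-w_{\infty}''=F_{\infty}:=\alpha u_{1,\infty}f_{1}\left[u_{1,\infty}\right]-u_{2,\infty}f_{2}\left[u_{2,\infty}\right]$. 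Substituting $u_{1,\infty}=w_{\infty}^{+}/\alpha$, $u_{2,\infty}=w_{\infty}^{-}/d$ and comparing with the definition of $\eta$ shows $F_{\infty}=\eta\left[w_{\infty}\right]$, i.e. $-w_{\infty}''=\eta\left[w_{\infty}\right]$.

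It remains to see that, unless the limit point is $\left(0,0\right)$, $w_{\infty}$ is non-zero and changes sign. If $w_{\infty}\equiv0$, disjointness forces $u_{1,\infty}=u_{2,\infty}\equiv0$. If $w_{\infty}\geq0$ on $\mathbb{R}$, then $u_{2,\infty}\equiv0$ and $u_{1,\infty}=w_{\infty}/\alpha$ solves $-u_{1,\infty}''=u_{1,\infty}f_{1}\left[u_{1,\infty}\right]$; by the strong maximum principle and the uniqueness of the positive periodic solution recalled above, $u_{1,\infty}\in\left\{0,\tilde{u}_{1}\right\}$, while Lemma \ref{lem:max_principles_coexistence_states} gives $\min_{\overline{C}}u_{1,k}\leq M_{2}/\left(\alpha k\right)\to0$, so $\min_{\overline{C}}u_{1,\infty}=0$, excluding $u_{1,\infty}=\tilde{u}_{1}\gg0$; hence $u_{1,\infty}\equiv0$. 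Symmetrically $w_{\infty}\leq0$ on $\mathbb{R}$ forces the limit point to be $\left(0,0\right)$. Therefore every limit point distinct from $\left(0,0\right)$ yields a $w_{\infty}=\alpha u_{1,\infty}-du_{2,\infty}$ in $\mathcal{C}_{per}^{2,\beta}\left(\mathbb{R}\right)$ that is non-zero, sign-changing, and solves $-w_{\infty}''=\eta\left[w_{\infty}\right]$, which is what we want. I expect the main obstacle to be the relative compactness step: no uniform bound on $u_{i,k}''$ is available — that is exactly what segregation means — so compactness cannot come from elliptic regularity on the system itself, and the point that rescues the argument is precisely that the singular term has the sign bounding $u_{i}''$ from below, after which a one-sided second-derivative bound together with the $L^{\infty}$ bound already forces equi-Lipschitz continuity. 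The sign-changing assertion is the other delicate spot, since it requires combining the segregation with the quantitative lower bounds of Lemma \ref{lem:max_principles_coexistence_states} and the rigidity of the scalar problem.
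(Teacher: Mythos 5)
Your proof is correct and follows essentially the same route as the paper: uniform H\"older/Lipschitz equicontinuity exploiting the favourable sign of the competition term, $\int_C u_{1,k}u_{2,k}\to 0$ giving segregation of the limit, the combination $\alpha u_{1,k}-du_{2,k}$ whose limit lies in $\mathcal{C}_{per}^{2,\beta}$ by elliptic regularity and solves $-z''=\eta\left[z\right]$, and exclusion of the one-signed limits $\left(\tilde{u}_1,0\right)$, $\left(0,\tilde{u}_2\right)$ via Lemma \ref{lem:max_principles_coexistence_states} (the paper gets equicontinuity from the energy estimate $\|u_{1,k}'\|_{L^{2}(C)}\le M_1\|\tilde{u}_1\|_{L^{2}(C)}$ rather than your one-sided second-derivative bound, and kills the product with test functions rather than integration over a period --- immaterial differences). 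One caveat: your sketched perturbation construction of $O\left(1/k\right)$ coexistence states, meant to show that $\left(0,0\right)$ is actually attained as a limit point, is neither rigorous as stated nor needed --- the proposition, as proved in the paper and as used later in Corollary \ref{cor:only_limit_stationary_state}, only requires the classification that every limit point is either $\left(0,0\right)$ or a sign-changing segregated state, which your argument does establish.
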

\begin{proof}
Let $k>k^{\star}$. 

Multiplying by $u_{1,k}$ the first equation of the stationary system
and integrating over $C$ yields easily: 
\begin{eqnarray*}
\|u_{1,k}'\|_{L^{2}\left(C\right)} & \leq & M_{1}\|u_{1,k}\|_{L^{2}\left(C\right)}\\
 & \leq & M_{1}\|\tilde{u}_{1}\|_{L^{2}\left(C\right)},
\end{eqnarray*}
whence, for all $\left(x,y\right)\in C^{2}$:
\[
\left|u_{1,k}\left(x\right)-u_{1,k}\left(y\right)\right|\leq M_{1}\|\tilde{u}_{1}\|_{L^{2}\left(C\right)}\left|x-y\right|^{\nicefrac{1}{2}}.
\]
Moreover, $\|u_{1,k}\|_{L^{\infty}\left(C\right)}\leq\|\tilde{u}_{1}\|_{L^{\infty}\left(C\right)}$,
and therefore $\left(u_{1,k}\right)_{k>k^{\star}}$ is uniformly bounded
in $\mathcal{C}^{0,\nicefrac{1}{2}}\left(C\right)$ and relatively
compact in $\mathcal{C}^{0,\beta}\left(C\right)$. The same proof
holds for $\left(u_{2}\right)_{k>k^{\star}}$. 

Let $\left(u_{1,\infty},u_{2,\infty}\right)\in\mathcal{C}_{per}^{0,\beta}\left(\mathbb{R},\mathbb{R}^{2}\right)$
be a limit point of $\left(S_{k}\right)_{k>k^{\star}}$. There exists
a sequence of periodic coexistence states $\left(\left(u_{1,k},u_{2,k}\right)\right)_{k>k^{\star}}$
whose limit in $\mathcal{C}_{per}^{0,\beta}\left(\mathbb{R},\mathbb{R}^{2}\right)$
is $\left(u_{1,\infty},u_{2,\infty}\right)$. By elliptic regularity
and thanks to the following equation:
\[
-\alpha u_{1,k}''+dv_{2,k}''=\alpha u_{1,k}f_{1}\left[u_{1,k}\right]-u_{2,k}f_{2}\left[u_{2,k}\right],
\]
which holds for any $k>k^{\star}$ and is obtained by linear combination
of the equations of the stationary system, $\left(\alpha u_{1,k}-du_{2,k}\right)$
converge in $\mathcal{C}_{per}^{2,\beta}\left(\mathbb{R}\right)$
to $v=\alpha u_{1,\infty}-du_{2,\infty}\in\mathcal{C}_{per}^{2,\beta}\left(\mathbb{R}\right)$.

Multiplying by a test function $\varphi\in\mathcal{D}\left(\mathbb{R}\right)$
the equation defining $u_{1,k}$, integrating and dividing by $k$,
we obtain easily that $\left(u_{1,k}u_{2,k}\right)$ converges as
$k\to+\infty$ in $\mathcal{D}'\left(\mathbb{R}\right)$ to $0$.
Hence $u_{1,\infty}u_{2,\infty}=0$ and then $\alpha u_{1,\infty}=v^{+}$
and $du_{2,\infty}=v^{-}$. In particular, $v$ satisfies as claimed:
\[
-v''=\eta\left[v\right]
\]

Let: 
\[
C_{1}=\left\{ x\in C\ |\ v\left(x\right)>0\right\} ,
\]
\[
C_{2}=\left\{ x\in C\ |\ v\left(x\right)<0\right\} ,
\]
\[
\Gamma=\left\{ x\in C\ |\ v\left(x\right)=0\right\} ,
\]
so that: 
\[
C\subset C_{1}\cup C_{2}\cup\Gamma\subset\overline{C}.
\]

Exactly four cases are a priori possible:
\begin{enumerate}
\item $C_{1}=C$: then by continuity $v=\alpha u_{1,\infty}$ in $\overline{C}$
whereas $u_{2,\infty}=0$ in $\overline{C}$, hence $u_{1,\infty}\in\mathcal{C}_{per}^{2,\beta}\left(\mathbb{R}\right)$
is a non-negative non-zero solution of
\[
-u_{1,\infty}''=u_{1,\infty}f_{1}\left[u_{1,\infty}\right]
\]
 in $\mathbb{R}$, and eventually by the elliptic strong minimum principle
$u_{1,\infty}\gg0$, meaning that $u_{1,\infty}=\tilde{u}_{1}$, and
$C_{2}=\Gamma=\emptyset$;
\item $C_{2}=C$: then similarly $C_{1}=\Gamma=\emptyset$, $u_{1,\infty}=0$
and $u_{2,\infty}=\tilde{u}_{2}$;
\item $C_{1}\neq\emptyset$ and $C_{2}\neq\emptyset$.
\item $C_{1}=\emptyset$ and $C_{2}=\emptyset$: $\Gamma=C$, $u_{1,\infty}$
and $v_{2,\infty}$ are uniformly $0$;
\end{enumerate}
It is easily seen that Lemma \ref{lem:max_principles_coexistence_states}
excludes the cases $1$ (use the second inequality) and $2$ (use
the first inequality).
\end{proof}
\begin{prop}
\label{prop:high_frequency_consequence}The following set equalities
hold:
\[
\left\{ z\in\mathcal{C}_{per}^{2}\left(\mathbb{R}\right)\ |\ -z''=\gamma\left[z\right]\right\} =\left\{ 0\right\} ,
\]
\[
\left\{ z\in\mathcal{C}_{per}^{2}\left(\mathbb{R}\right)\ |\ -z''=\eta\left[z\right]\right\} =\left\{ -d\tilde{u}_{2},0,\alpha\tilde{u}_{1}\right\} .
\]
\end{prop}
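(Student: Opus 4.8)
The plan is to prove both set equalities by the same two-step argument: check the easy inclusions, then rule out every other periodic solution by distinguishing whether it changes sign. The inclusions $\supseteq$ are immediate from the definitions of $\eta$, $\gamma$ and the equations defining $\tilde{u}_{1}$, $\tilde{u}_{2}$: the function $0$ solves both equations; $z=\alpha\tilde{u}_{1}\geq 0$ has $z^{-}=0$, so $\eta[z]=f_{1}[\tilde{u}_{1}]\,\alpha\tilde{u}_{1}=-\alpha\tilde{u}_{1}''=-z''$; and $z=-d\tilde{u}_{2}\leq 0$ has $z^{+}=0$, so $\eta[z]=-\frac{1}{d}f_{2}[\tilde{u}_{2}]\,d\tilde{u}_{2}=d\tilde{u}_{2}''=-z''$. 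Only the reverse inclusions require work.

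First I would treat a periodic solution $z$ that does not change sign. If $z\geq 0$ and $-z''=\eta[z]$, then $z^{-}=0$, so $w=z/\alpha\geq 0$ is a periodic solution of $-w''=wf_{1}[w]$; if $z\not\equiv 0$ the elliptic strong maximum principle forces $z\gg 0$, and then $w=\tilde{u}_{1}$ by the Berestycki--Hamel--Roques uniqueness theorem recalled above, so $z=\alpha\tilde{u}_{1}$. Symmetrically, $z\leq 0$ gives $z\in\{0,-d\tilde{u}_{2}\}$. For $\gamma$ the same dichotomy is quicker still: a non-negative non-zero periodic solution of $-z''=\gamma[z]$ is a positive periodic solution of $-z''=f_{1}[0]z$, hence a periodic principal eigenfunction of $-\frac{d^{2}}{dx^{2}}-f_{1}[0]$ associated with the eigenvalue $0$, which contradicts the negativity of that eigenvalue established above; likewise on the negative side with $-d\frac{d^{2}}{dx^{2}}-f_{2}[0]$. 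So a non-sign-changing periodic solution of either equation already lies in the claimed set.

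The heart of the proof, and the only place where $(\mathcal{H}_{freq})$ enters, is the exclusion of sign-changing periodic solutions. Let $z$ be such a solution, of either equation. On the open set $\{z>0\}$ one has $-z''=c(x)\,z$, with $c=f_{1}(z/\alpha,\cdot)$ in the $\eta$ case and $c=f_{1}(0,\cdot)$ in the $\gamma$ case; in both cases $c\leq M_{1}$ since $f_{1}$ is decreasing in its first variable and $M_{1}=\max_{\overline{C}}f_{1}[0]$. Fix a connected component $(a,b)$ of $\{z>0\}$; a sign-changing $L$-periodic function cannot be positive on an interval of length $\geq L$, so $(a,b)$ is bounded, $z>0$ in $(a,b)$ and $z(a)=z(b)=0$. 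Then $z$ is a positive Dirichlet eigenfunction of $-\frac{d^{2}}{dx^{2}}-c$ on $(a,b)$ for the eigenvalue $0$, i.e. $\lambda_{1,Dir}(-\frac{d^{2}}{dx^{2}}-c,(a,b))=0$; monotonicity of the Dirichlet principal eigenvalue in the zeroth-order coefficient yields $\frac{\pi^{2}}{(b-a)^{2}}-M_{1}=\lambda_{1,Dir}(-\frac{d^{2}}{dx^{2}}-M_{1},(a,b))\leq 0$, that is $b-a\geq\frac{\pi}{\sqrt{M_{1}}}$. Repeating this on a component of $\{z<0\}$, where after replacing $z$ by $-z$ and dividing the equation by $d$ the relevant coefficient is $\leq\frac{M_{2}}{d}$, produces a component of length $\geq\pi\sqrt{\frac{d}{M_{2}}}$. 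Viewing $z$ on the circle $\mathbb{R}/L\mathbb{Z}$, one now has a positive arc of length $\geq\frac{\pi}{\sqrt{M_{1}}}$ and a negative arc of length $\geq\pi\sqrt{\frac{d}{M_{2}}}$; these arcs are disjoint, so their lengths sum to at most $L$, whence $L\geq\pi(\frac{1}{\sqrt{M_{1}}}+\sqrt{\frac{d}{M_{2}}})$, contradicting $(\mathcal{H}_{freq})$. Hence no periodic solution of either equation changes sign, and both equalities follow from the previous paragraph.

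The main obstacle is this last step: recognizing each nodal interval as a Dirichlet nodal domain on which $z$ is a principal eigenfunction, bounding its length from below by a constant-coefficient eigenvalue comparison, and fitting one positive and one negative nodal interval into a single period as disjoint arcs --- this is precisely the ``high frequency or small amplitudes'' mechanism encoded in $(\mathcal{H}_{freq})$. Everything else is a direct appeal to the scalar results already in hand (the Berestycki--Hamel--Roques uniqueness theorem and the negativity of the two periodic principal eigenvalues). One minor point to handle is the degenerate case where the zero set of $z$ has positive Lebesgue measure, but this is harmless since the arc-length count uses only a single component of $\{z>0\}$ and a single component of $\{z<0\}$.
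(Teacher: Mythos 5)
Your proof is correct and follows essentially the same route as the paper: constant-sign periodic solutions are handled via the negativity of the periodic principal eigenvalues and the Berestycki--Hamel--Roques uniqueness theorem, and sign-changing ones are excluded by viewing each nodal interval as a Dirichlet principal-eigenvalue domain, bounding its length below by $\pi/\sqrt{M_1}$ (resp. $\pi\sqrt{d/M_2}$) through comparison with constant coefficients, and playing this against $L$ via $\left(\mathcal{H}_{freq}\right)$. The only difference is cosmetic: the paper measures all nodal components (using the Hopf lemma to count them), whereas you use a single component of each sign with the explicit eigenvalue $\pi^{2}/(b-a)^{2}$, which slightly streamlines the same argument.
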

\begin{proof}
In the $\gamma$ case, solutions of constant sign are excluded by:
\[
\lambda_{1,per}\left(-\frac{\mbox{d}^{2}}{\mbox{d}x^{2}}-f_{1}\left[0\right]\right)<0,
\]
\[
\lambda_{1,per}\left(-d\frac{\mbox{d}^{2}}{\mbox{d}x^{2}}-f_{2}\left[0\right]\right)<0.
\]

In the $\eta$ case, solutions of constant sign are unique (see Berestycki\textendash Hamel\textendash Roques
\cite{Berestycki_Ham_1}) and are exactly $\alpha\tilde{u}_{1}$ and
$-d\tilde{u}_{2}$. It only remains to prove that non-zero sign-changing
solutions are excluded, and up to a shift of $C$ it suffices to prove
that non-zero sign-changing solutions which are equal to $0$ at $0$
and $L$ are excluded. 

For any $x\in\mathbb{R}$, any $f\in\mathcal{C}_{per}^{0}\left(\mathbb{R},\left[m,M\right]\right)$
and any $\delta\in\left\{ 1,d\right\} $, let $R\left(x,f,\delta\right)>0$
such that: 
\[
\lambda_{1,Dir}\left(-\delta\frac{\mbox{d}^{2}}{\mbox{d}x^{2}}-f,B\left(x,R\left(x,f,\delta\right)\right)\right)=0.
\]
Since the following function: 
\[
R\mapsto\lambda_{1,Dir}\left(-\delta\frac{\mbox{d}^{2}}{\mbox{d}x^{2}}-f,B\left(x,R\right)\right)
\]
 is continuous, decreasing and has positive and negative values (its
limits as $R\to0$ or $R\to+\infty$ are respectively $+\infty$ and
$\lambda_{1,per}\left(-\delta\frac{\mbox{d}^{2}}{\mbox{d}x^{2}}-f\right)<0$,
as proved in \cite{Berestycki_Ham_1}), $R\left(x,f,\delta\right)$
is uniquely defined. Since $\lambda_{1,Dir}\left(-\delta\frac{\mbox{d}^{2}}{\mbox{d}x^{2}}-f,B\left(x,R\right)\right)$
is non-increasing with respect to $f$ and decreasing with respect
to $R$, it is easy to check that $f\mapsto R\left(x,f,\delta\right)$
is non-increasing. 

Remark that $R\left(x,f,\delta\right)$ and $\lambda_{1,Dir}\left(-\delta\frac{\mbox{d}^{2}}{\mbox{d}x^{2}}-f,B\left(x,R\left(x,f,\delta\right)\right)\right)$
do not depend on $x$ if $f$ does not depend on $x$. Remark that,
in such a case, $R\left(0,f,\delta\right)$ can be easily determined
analytically and is equal to $\frac{\pi}{2}\sqrt{\frac{\delta}{f}}$.

With these notations, $\left(\mathcal{H}_{freq}\right)$ means: 
\[
L<2\left(R\left(0,M_{1},1\right)+R\left(0,M_{2},d\right)\right).
\]

Let $z$ be a solution of $-z''=\gamma\left[z\right]$ or a solution
of $-z''=\eta\left[z\right]$. Let: 
\[
C_{+}=z^{-1}\left(\left(0,+\infty\right)\right)\cap C,
\]
\[
C_{-}=z^{-1}\left(\left(-\infty,0\right)\right)\cap C.
\]
Assume by contradiction that both are non-empty. Let $n$ be the number
of zeros of $z$ in $C$. Then:
\begin{itemize}
\item in virtue of the Hopf lemma, of:
\[
\min\left(\min_{x\in\overline{C}}R\left(x,f_{1}\left[0\right],1\right),\min_{x\in\overline{C}}R\left(x,f_{2}\left[0\right],d\right)\right)>0
\]
 and of the continuity of $z$, $n$ is finite and odd, say $n=2p+1$
with $p$ a non-negative integer, and $C_{+}$ and $C_{-}$ both have
precisely $p+1$ connected components, each of them being a one-dimensional
ball (that is an interval); let $\left(x_{i}^{+}\right)_{1\leq i\leq p+1}$
(resp. $\left(x_{i}^{-}\right)_{1\leq i\leq p+1}$) be the ordered
centers of the connected components of $C_{+}$ (resp. $C_{-}$); 
\item in the $\gamma$ case: 
\begin{eqnarray*}
\left|C_{+}\right| & = & 2\sum_{i=1}^{p+1}R\left(x_{i}^{+},f_{1}\left[0\right],1\right)\\
 & \geq & 2\sum_{i=1}^{p+1}R\left(x_{i}^{+},M_{1},1\right)\\
 & \geq & 2\left(p+1\right)R\left(0,M_{1},1\right)\\
 & \geq & 2R\left(0,M_{1},1\right),
\end{eqnarray*}
and similarly: 
\begin{eqnarray*}
\left|C_{-}\right| & = & 2\sum_{i=1}^{p+1}R\left(x_{i}^{-},f_{2}\left[0\right],d\right)\\
 & \geq & 2R\left(0,M_{2},d\right),
\end{eqnarray*}
 whence we get the contradiction;
\item in the $\eta$ case: 
\begin{align*}
\left|C_{+}\right| & =2\sum_{i=1}^{p+1}R\left(x_{i}^{+},f_{1}\left[\frac{z}{\alpha}\right],1\right)\\
 & \geq2\sum_{i=1}^{p+1}R\left(x_{i}^{+},f_{1}\left[0\right],1\right),
\end{align*}
\begin{align*}
\left|C_{-}\right| & =2\sum_{i=1}^{p+1}R\left(x_{i}^{-},f_{2}\left[-\frac{z}{d}\right],d\right)\\
 & \geq2\sum_{i=1}^{p+1}R\left(x_{i}^{-},f_{2}\left[0\right],d\right)
\end{align*}
 yield a similar contradiction.
\end{itemize}
\end{proof}
\begin{cor}
\label{cor:only_limit_stationary_state} Any family $\left(u_{1,k},u_{2,k}\right)_{k>k^{\star}}$
of periodic coexistence states converges in $\mathcal{C}_{per}^{0,\beta}\left(\mathbb{R},\mathbb{R}^{2}\right)$
as $k\to+\infty$ to $\left(0,0\right)$.
\end{cor}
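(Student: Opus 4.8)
The plan is to feed Proposition \ref{prop:high_frequency_consequence} into Proposition \ref{prop:limit_points_coexistence_states} and then close by a soft compactness argument. First I would note that any periodic coexistence state of $\left(\mathcal{P}_{k}\right)$ belongs to $S_{k}$: if $\left(u_{1},u_{2}\right)$ is such a state, then $u_{1}>0$ and $u_{2}>0$ are periodic, and from $-u_{1}''=u_{1}f_{1}\left[u_{1}\right]-ku_{1}u_{2}\leq u_{1}f_{1}\left[u_{1}\right]$ together with $\left(\mathcal{H}_{3}\right)$ and comparison with the unique positive periodic solution $\tilde{u}_{1}$ of $-z''=zf_{1}\left[z\right]$ one gets $u_{1}\leq\tilde{u}_{1}$, hence $0\ll u_{1}\ll\tilde{u}_{1}$ by the strong maximum principle (because $ku_{1}u_{2}\not\equiv 0$); symmetrically $0\ll u_{2}\ll\tilde{u}_{2}$. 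Thus the family $\left(u_{1,k},u_{2,k}\right)_{k>k^{\star}}$ lies in $\bigcup_{k>k^{\star}}S_{k}$.

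By Proposition \ref{prop:limit_points_coexistence_states}, this family is relatively compact in $\mathcal{C}_{per}^{0,\beta}\left(\mathbb{R},\mathbb{R}^{2}\right)$ and each of its limit points $\left(u_{1,\infty},u_{2,\infty}\right)$ is either $\left(0,0\right)$ or a periodic segregated state, in which latter case $v=\alpha u_{1,\infty}-du_{2,\infty}\in\mathcal{C}_{per}^{2,\beta}\left(\mathbb{R}\right)$ is a non-zero sign-changing solution of $-z''=\eta\left[z\right]$. But by Proposition \ref{prop:high_frequency_consequence} the only $\mathcal{C}_{per}^{2}\left(\mathbb{R}\right)$ solutions of $-z''=\eta\left[z\right]$ are $-d\tilde{u}_{2}$, $0$ and $\alpha\tilde{u}_{1}$, and since $\tilde{u}_{1}\gg 0$ and $\tilde{u}_{2}\gg 0$, none of them is at once non-zero and sign-changing. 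Therefore the second alternative is impossible, and $\left(0,0\right)$ is the unique limit point of $\left(u_{1,k},u_{2,k}\right)_{k>k^{\star}}$.

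It then remains to invoke the elementary fact that a relatively compact family in a metric space having a single limit point converges to it: were $\left(u_{1,k},u_{2,k}\right)$ not to converge to $\left(0,0\right)$ in $\mathcal{C}_{per}^{0,\beta}\left(\mathbb{R},\mathbb{R}^{2}\right)$, there would be $\varepsilon>0$ and a sequence $k_{n}\to+\infty$ with $\|\left(u_{1,k_{n}},u_{2,k_{n}}\right)\|_{\mathcal{C}_{per}^{0,\beta}\left(\mathbb{R},\mathbb{R}^{2}\right)}\geq\varepsilon$, and extracting a convergent subsequence by relative compactness would exhibit a limit point at distance at least $\varepsilon$ from $\left(0,0\right)$, a contradiction. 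I do not expect a genuine obstacle here: the whole substance is carried by Propositions \ref{prop:limit_points_coexistence_states} and \ref{prop:high_frequency_consequence}, and the only point deserving a line of care is this passage from \textquotedblleft unique limit point\textquotedblright{} to \textquotedblleft convergence\textquotedblright , which is precisely where the relative compactness of $\left(S_{k}\right)_{k>k^{\star}}$ is used.
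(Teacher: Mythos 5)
Your proposal is correct and follows exactly the route the paper intends for this corollary: combine Proposition \ref{prop:limit_points_coexistence_states} (relative compactness plus the classification of limit points as either $\left(0,0\right)$ or a segregated state whose difference $\alpha u_{1,\infty}-du_{2,\infty}$ is a non-zero sign-changing solution of $-z''=\eta\left[z\right]$) with Proposition \ref{prop:high_frequency_consequence} (which excludes such sign-changing solutions), and pass from uniqueness of the limit point to convergence by the standard compactness argument. Your preliminary verification that every periodic coexistence state indeed lies in $S_{k}$ is a small extra detail the paper leaves implicit, and it is argued correctly.
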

\begin{rem*}
This result has a very natural interpretation from an ecological point
of view: if the wavelength of the distribution of resources is small
enough, or if the resources are rare enough even in the most favorable
areas, the species are not able to settle periodically in a favorable
habitat smaller than the wavelength. Either one of them is strong
enough to overcome unfavorable areas while eliminating the competitor
and then it settles in the whole habitat, either both go extinct.
Basically, at a given average intrinsic growth rate, the more fragmented
the habitat is, the higher the chances of extinction are. 
\end{rem*}
\begin{lem}
\label{lem:ku/kv_bounded}There exists $R_{1}\in\left(0,+\infty\right)$
and $R_{2}\in\left(R_{1},+\infty\right)$ such that, provided $k^{\star}$
is large enough, for any $k>k^{\star}$ and any $\left(u_{1,k},u_{2,k}\right)\in S_{k}$:
\[
R_{1}\leq\frac{\|u_{2,k}\|_{L^{\infty}\left(C\right)}}{\alpha\|u_{1,k}\|_{L^{\infty}\left(C\right)}}\leq R_{2}.
\]
\end{lem}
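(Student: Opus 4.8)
The plan is to argue by contradiction, rescaling the coexistence states by the sup-norm of their larger component, and then exploiting the fact that the combination $\alpha u_1-du_2$ solves a \emph{scalar} equation from which the troublesome coupling term has vanished.

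First I would record the consequences of Lemma~\ref{lem:max_principles_coexistence_states} that will be used. By Corollary~\ref{cor:only_limit_stationary_state} any family of periodic coexistence states tends to $(0,0)$ uniformly as $k\to+\infty$, so $\min_{\overline C}f_i[\min u_{i,k}]\to m_i>0$ and $\max_{\overline C}f_i[\max u_{i,k}]\to M_i$; feeding this into the third and fourth inequalities of the lemma yields, provided $k^\star$ is large enough, $k\,\|u_{2,k}\|_{L^\infty(C)}\ge m_1/2$ and $\alpha k\,\|u_{1,k}\|_{L^\infty(C)}\ge m_2/2$ — in particular both components are non-zero — whereas the first two inequalities only control $k\min u_{2,k}$ and $\alpha k\min u_{1,k}$. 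It is precisely the absence of an upper bound on $k\max u_{2,k}$, $\alpha k\max u_{1,k}$ (a uniform Harnack inequality being unavailable, since the zeroth order coefficients of the scalar equations satisfied by $u_{1,k}$ and $u_{2,k}$ are unbounded) that forces a compactness argument.

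Now write $a_k=\|u_{1,k}\|_{L^\infty(C)}$, $b_k=\|u_{2,k}\|_{L^\infty(C)}$ and suppose the upper bound $R_2$ fails: there are $k_n>k^\star$ and periodic coexistence states $(u_{1,n},u_{2,n})$ with $b_n/(\alpha a_n)\to+\infty$. Since $b_n\le\|\tilde u_2\|_{L^\infty(C)}$ one has $a_n\to0$, hence $u_{1,n}\to0$ uniformly; and $u_{2,n}\to0$ uniformly as well — if $k_n\to+\infty$ this is again Corollary~\ref{cor:only_limit_stationary_state}, and if $(k_n)_n$ stays bounded it follows from elliptic regularity (the coupling coefficient $\alpha k_nu_{1,n}$ being then uniformly bounded), from the uniqueness of the positive periodic solution of $-dz''=zf_2[z]$, and from the freedom to have chosen $k^\star$ so large that $b_k<\tfrac{1}{2}\|\tilde u_2\|_{L^\infty(C)}$. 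Next, rescale by setting $\bar u_{i,n}=u_{i,n}/b_n$, so that $\|\bar u_{1,n}\|_{L^\infty(C)}=a_n/b_n\to0$ and $\|\bar u_{2,n}\|_{L^\infty(C)}=1$. Forming $\alpha\times(\text{first stationary equation})-(\text{second stationary equation})$ the terms $ku_1u_2$ cancel, so that $z_n:=\alpha\bar u_{1,n}-d\bar u_{2,n}$ solves
\[
-z_n''=\alpha\,\bar u_{1,n}f_1[u_{1,n}]-\bar u_{2,n}f_2[u_{2,n}],
\]
an equation in which $k$ no longer appears. Testing the scalar equations satisfied by $\bar u_{1,n}$ and $\bar u_{2,n}$ against themselves and integrating over $C$ (exactly as in the proof of Proposition~\ref{prop:limit_points_coexistence_states}) bounds $\bar u_{1,n}$, $\bar u_{2,n}$, and hence $z_n$, in $\mathcal C^{0,\nicefrac{1}{2}}_{per}(\mathbb R)$; the right-hand side above is then bounded in $\mathcal C^{0,\nicefrac{1}{2}}_{per}(\mathbb R)$ and elliptic regularity makes $z_n$ relatively compact in $\mathcal C^{2,\beta}_{per}(\mathbb R)$. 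Along a subsequence $z_n\to z$ in $\mathcal C^{2,\beta}_{per}(\mathbb R)$; since $\bar u_{1,n}\to0$ uniformly, $z=-d\psi$ with $\psi:=-z/d\in\mathcal C^{2,\beta}_{per}(\mathbb R)$, $\psi\ge0$, $\|\psi\|_{L^\infty(C)}=1$; and since $u_{i,n}\to0$ uniformly, $f_i[u_{i,n}]\to f_i[0]$, so passing to the limit gives $-d\psi''=\psi f_2[0]$. By the elliptic strong minimum principle $\psi\gg0$, whence $\lambda_{1,per}\!\left(-d\frac{\mbox{d}^{2}}{\mbox{d}x^{2}}-f_2[0]\right)=0$, contradicting the negativity established in Subsection~\ref{subsec:Extinction-states}. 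The failure of the lower bound $R_1$ is excluded by the symmetric argument: supposing instead $b_n/(\alpha a_n)\to0$, one rescales by $a_n$ and reaches $\lambda_{1,per}\!\left(-\frac{\mbox{d}^{2}}{\mbox{d}x^{2}}-f_1[0]\right)=0$, again impossible. Hence $b_k/(\alpha a_k)$ lies in a fixed compact subinterval $[R_1,R_2]$ of $(0,+\infty)$.

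The main obstacle is that $k\to+\infty$: one cannot pass to the limit in the full stationary \emph{system}, since $k u_{1,n}u_{2,n}$ is only known to be bounded in, say, $L^1$, so the usual elliptic-estimates-and-extraction scheme does not apply directly to the system. The device that resolves this is that $\alpha u_1-du_2$ satisfies a $k$-free scalar equation, which brings the analysis back to the scalar periodic principal eigenvalue picture already settled in Subsection~\ref{subsec:Extinction-states}. The remaining delicate point is the bookkeeping that guarantees $u_{i,n}\to0$ uniformly even along subsequences on which $k_n$ stays bounded — this is where the uniqueness of the $\tilde u_i$, Corollary~\ref{cor:only_limit_stationary_state}, and the latitude in enlarging $k^\star$ are used.
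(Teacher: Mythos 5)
Your proof is correct, but it takes a genuinely different route from the paper's. The paper argues by contradiction assuming (without loss of generality) that the ratio tends to $0$ and splits according to whether $\alpha k\|u_{1,k}\|_{L^{\infty}\left(C\right)}$ stays bounded: in the bounded case it reads the $u_{1}$-equation as a periodic logistic equation with harvesting term $ku_{2,k}\to0$ and uses the continuous dependence of its positive periodic solution on that term (the Berestycki\textendash Rossi result) to force $u_{1,k}\to\tilde{u}_{1}$, incompatible with the boundedness of $k\|u_{1,k}\|_{L^{\infty}\left(C\right)}$; in the unbounded case it normalizes each component by its own sup norm and shows, by testing against $\varphi\in\mathcal{D}\left(C\right)$, that $\hat{u}_{1,\infty}\hat{u}_{2,\infty}=0$ while the cross term in the $\hat{u}_{1}$-equation vanishes (ratio times a bounded quantity), so that $\hat{u}_{1,\infty}\gg0$, contradicting $\|\hat{u}_{2,\infty}\|_{L^{\infty}\left(C\right)}=1$. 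You instead normalize both components by the sup norm of the dominant one and exploit the $k$-free equation satisfied by $\alpha u_{1}-du_{2}$ (the same device the paper uses in Proposition \ref{prop:limit_points_coexistence_states} and Lemma \ref{lem:(ku,kv)_bounded}), then pass to the limit using Corollary \ref{cor:only_limit_stationary_state} to replace $f_{i}\left[u_{i,n}\right]$ by $f_{i}\left[0\right]$, and contradict the negativity of $\lambda_{1,per}\left(-\frac{\mbox{d}^{2}}{\mbox{d}x^{2}}-f_{1}\left[0\right]\right)$ and $\lambda_{1,per}\left(-d\frac{\mbox{d}^{2}}{\mbox{d}x^{2}}-f_{2}\left[0\right]\right)$ from Subsection \ref{subsec:Extinction-states}. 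This is legitimate: the corollary precedes the lemma and its proof rests only on Propositions \ref{prop:limit_points_coexistence_states} and \ref{prop:high_frequency_consequence}, so there is no circularity. Your version buys a more self-contained argument (no case split on $\alpha k\|u_{1,k}\|_{L^{\infty}\left(C\right)}$, no continuity-in-the-harvesting-term input, everything reduced to the scalar periodic eigenvalue picture; in fact your limit equations can be dispatched even more cheaply, since a non-negative periodic solution of $-\phi''=\phi f_{1}\left[0\right]$ is concave, hence constant, hence zero because $f_{1}\left[0\right]\geq m_{1}>0$); the paper's version never needs the uniform smallness of the coexistence states given by Corollary \ref{cor:only_limit_stationary_state} and its unbounded case yields the segregation relation directly. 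Two minor remarks: your side-case of bounded $\left(k_{n}\right)$ is vacuous, since negating a statement prefixed by \textquotedblleft provided $k^{\star}$ is large enough\textquotedblright{} already produces violating indices $k_{n}\to+\infty$; and the uniform H\"older bound you need on the right-hand side of the $z_{n}$-equation uses the same implicit uniformity of $\left(\mathcal{H}_{1}\right)$ that the paper itself invokes in Proposition \ref{prop:limit_points_coexistence_states}, so it is consistent with the paper's standards.
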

\begin{rem*}
Proof inspired by Dancer\textendash Du \cite[Lemma 2.1]{Dancer_Du_1994}. 
\end{rem*}
\begin{proof}
By contradiction, assume that there exists a sequence of periodic
coexistence states $\left(\left(u_{1,k},u_{2,k}\right)\right)_{k>k^{\star}}$
such that $\left(\frac{\|u_{2,k}\|_{L^{\infty}\left(C\right)}}{\alpha\|u_{1,k}\|_{L^{\infty}\left(C\right)}}\right)_{k>k^{\star}}$
is neither bounded from above nor from below by a positive constant.
By symmetry, we can assume without loss of generality that it is not
bounded from below by a positive constant. Up to extraction, $\frac{\|u_{2,k}\|_{L^{\infty}\left(C\right)}}{\alpha\|u_{1,k}\|_{L^{\infty}\left(C\right)}}\to0$
as $k\to+\infty$. 

Suppose first that $\left(\alpha k\|u_{1,k}\|_{L^{\infty}\left(C\right)}\right)_{k>k^{\star}}$
is bounded. Necessarily, $k\|u_{2,k}\|_{L^{\infty}\left(C\right)}\to0$
as $k\to+\infty$.

For any non-negative $f\in\mathcal{C}\left(\mathbb{R},\mathbb{R}\right)$,
the following problem: 
\[
-z''=zf_{1}\left[z\right]-zf
\]
 with periodicity conditions has a unique positive periodic solution
$z_{f}$ if and only if: 
\[
\lambda_{1,per}\left(-\frac{\mbox{d}^{2}}{\mbox{d}x^{2}}-\left(f_{1}-f\right)\right)<0
\]
 (see Berestycki\textendash Hamel\textendash Roques \cite{Berestycki_Ham_1}).
Moreover, $z_{f}$ depends continuously on $f$ as a map from $\mathcal{C}_{per}\left(C\right)$
into itself (see Berestycki\textendash Rossi \cite{Berestycki_Ros}).
Hence $u_{1,k}=z_{ku_{2,k}}\to z_{0}$ as $k\to+\infty$, where $z_{0}$
solves: 
\[
-z_{0}''=z_{0}f_{1}\left[z_{0}\right]
\]
 with periodicity conditions (that is $u\left[0\right]=\tilde{u}_{1}$).
Since $k\|\tilde{u}_{1}\|_{L^{\infty}\left(C\right)}\to+\infty$,
we get a contradiction.

Hence $\left(\alpha k\|u_{1,k}\|_{L^{\infty}\left(C\right)}\right)_{k>k^{\star}}$
is unbounded. Up to extraction, we can assume that $k\|u_{1,k}\|_{L^{\infty}\left(C\right)}\to+\infty$. 

For any $k>k^{\star}$, let $\hat{u}_{1,k}=\frac{u_{1,k}}{\|u_{1,k}\|_{L^{\infty}\left(C\right)}}$,
$\hat{u}_{2,k}=\frac{u_{2,k}}{\|u_{2,k}\|_{L^{\infty}\left(C\right)}}$.
Clearly, $\left(\hat{u}_{1,k},\hat{u}_{2,k}\right)$ satisfies:
\[
\left\{ \begin{matrix}-\hat{u}_{1,k}''=\hat{u}_{1,k}f_{1}\left[\|u_{1,k}\|_{L^{\infty}\left(C\right)}\hat{u}_{1,k}\right]-k\|u_{2,k}\|_{L^{\infty}\left(C\right)}\hat{u}_{1,k}\hat{u}_{2,k}\\
-d\hat{u}_{2,k}''=\hat{u}_{2,k}f_{2}\left[\|u_{2,k}\|_{L^{\infty}\left(C\right)}\hat{u}_{2,k}\right]-\alpha k\|u_{1,k}\|_{L^{\infty}\left(C\right)}\hat{u}_{1,k}\hat{u}_{2,k}.
\end{matrix}\right.
\]

From there, it follows with the same estimates as in the proof of
Proposition \ref{prop:limit_points_coexistence_states} that $\hat{u}_{1,k}$
and $\hat{u}_{2,k}$ converge up to extraction in $\mathcal{C}_{per}^{0,\beta}\left(\mathbb{R}\right)$.
Let $\hat{u}_{1,\infty}$ and $\hat{u}_{2,\infty}$ be their limits;
for any $i\in\left\{ 1,2\right\} $ $\|\hat{u}_{i,\infty}\|_{L^{\infty}\left(C\right)}=1$,
hence $u_{i,\infty}\neq0$. 

Then, we consider the system above in $\mathcal{D}'\left(C\right)$.
Let $\varphi\in\mathcal{D}\left(C\right)$ and use it as a test function.
On the second line, we see that, since: 
\[
\int\left(d\hat{u}_{2,k}''+\hat{u}_{2,k}f_{2}\left[\|u_{2,k}\|_{L^{\infty}\left(C\right)}\hat{u}_{2,k}\right]\right)\varphi
\]
 is $k$-uniformly bounded, the same is true of: 
\[
\int\alpha k\|u_{1,k}\|_{L^{\infty}\left(C\right)}\hat{u}_{1,k}\hat{u}_{2,k}\varphi.
\]

Thus: 
\[
\int k\|u_{2,k}\|_{L^{\infty}\left(C\right)}\hat{u}_{1,k}\hat{u}_{2,k}\varphi=\frac{\|u_{2,k}\|_{L^{\infty}\left(C\right)}}{\alpha\|u_{1,k}\|_{L^{\infty}\left(C\right)}}\int\left(\alpha k\|u_{1,k}\|_{L^{\infty}\left(C\right)}\hat{u}_{1,k}\hat{u}_{2,k}\varphi\right)\to0
\]

Therefore, considering the first line, we see that, by dominated convergence,
the limit satisfies in the distributional sense:
\[
-\hat{u}_{1,\infty}''=\hat{u}_{1,\infty}f_{1}\left[\|u_{1,\infty}\|_{L^{\infty}\left(C\right)}\hat{u}_{1,\infty}\right].
\]

Since $\hat{u}_{1,\infty}$ is in $\mathcal{C}_{per}^{0,\beta}\left(\mathbb{R}\right)$,
it is actually a solution in $\mathcal{C}_{per}^{2,\beta}\left(\mathbb{R}\right)$
by classical elliptic regularity. In virtue of the elliptic strong
minimum principle, $\hat{u}_{1,\infty}\gg0$. But it is also true,
using the same arguments as before, that $\hat{u}_{1,\infty}\hat{u}_{2,\infty}=0$,
hence $\hat{u}_{2,\infty}=0$, which is indeed a contradiction.
\end{proof}
\begin{lem}
\label{lem:(ku,kv)_bounded} Let $\left(\left(u_{1,k},u_{2,k}\right)\right)_{k>k^{\star}}$
be a sequence of periodic coexistence states. Then $\left(\left(ku_{1,k},ku_{2,k}\right)\right)_{k>k^{\star}}$
is $k$-uniformly bounded in $L^{\infty}\left(C\right)$.
\end{lem}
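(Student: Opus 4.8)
The plan is to argue by contradiction, following the rescaling technique already used for Lemma~\ref{lem:ku/kv_bounded}. Suppose $\left(\left(ku_{1,k},ku_{2,k}\right)\right)_{k>k^{\star}}$ is not $k$-uniformly bounded in $L^{\infty}\left(C\right)$. By Lemma~\ref{lem:ku/kv_bounded} the quantities $k\|u_{1,k}\|_{L^{\infty}\left(C\right)}$ and $k\|u_{2,k}\|_{L^{\infty}\left(C\right)}$ stay comparable, so up to extraction we may assume that both tend to $+\infty$ and that $\rho_{k}:=\|u_{2,k}\|_{L^{\infty}\left(C\right)}/\|u_{1,k}\|_{L^{\infty}\left(C\right)}$ converges to some $\rho_{\infty}\in\left[\alpha R_{1},\alpha R_{2}\right]\subset\left(0,+\infty\right)$. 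Setting $\hat{u}_{i,k}:=u_{i,k}/\|u_{i,k}\|_{L^{\infty}\left(C\right)}$, the pair $\left(\hat{u}_{1,k},\hat{u}_{2,k}\right)$ solves
\begin{align*}
-\hat{u}_{1,k}''&=\hat{u}_{1,k}f_{1}\left[u_{1,k}\right]-k\|u_{2,k}\|_{L^{\infty}\left(C\right)}\hat{u}_{1,k}\hat{u}_{2,k},\\
-d\hat{u}_{2,k}''&=\hat{u}_{2,k}f_{2}\left[u_{2,k}\right]-\alpha k\|u_{1,k}\|_{L^{\infty}\left(C\right)}\hat{u}_{1,k}\hat{u}_{2,k}.
\end{align*}
Multiplying each equation by the corresponding $\hat{u}_{i,k}\ge0$ and integrating over $C$, the competitive terms being non-negative, yields a $k$-uniform $L^{2}\left(C\right)$ bound on $\hat{u}_{i,k}'$, exactly as in the proof of Proposition~\ref{prop:limit_points_coexistence_states}. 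Hence $\left(\hat{u}_{1,k},\hat{u}_{2,k}\right)$ is $k$-uniformly bounded in $\mathcal{C}_{per}^{0,\nicefrac{1}{2}}\left(\mathbb{R},\mathbb{R}^{2}\right)$ and, up to a further extraction, converges in $\mathcal{C}_{per}^{0,\beta}\left(\mathbb{R},\mathbb{R}^{2}\right)$ to some $\left(\hat{u}_{1,\infty},\hat{u}_{2,\infty}\right)$ with $\|\hat{u}_{i,\infty}\|_{L^{\infty}\left(C\right)}=1$ for $i\in\left\{1,2\right\}$.

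Then I would first recover the segregation identity $\hat{u}_{1,\infty}\hat{u}_{2,\infty}=0$: dividing the first rescaled equation by $k\|u_{2,k}\|_{L^{\infty}\left(C\right)}$, testing against $\varphi\in\mathcal{D}\left(\mathbb{R}\right)$, and using that $\hat{u}_{1,k}$ and $\hat{u}_{1,k}f_{1}\left[u_{1,k}\right]$ are $L^{\infty}$-bounded while $k\|u_{2,k}\|_{L^{\infty}\left(C\right)}\to+\infty$ and $\hat{u}_{1,k}\hat{u}_{2,k}\to\hat{u}_{1,\infty}\hat{u}_{2,\infty}$ uniformly, the limit equation reads $0=-\hat{u}_{1,\infty}\hat{u}_{2,\infty}$ in $\mathcal{D}'\left(\mathbb{R}\right)$. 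Second, I would pass to the limit in the linear combination
\[
-\left(\alpha u_{1,k}-du_{2,k}\right)''=\alpha u_{1,k}f_{1}\left[u_{1,k}\right]-u_{2,k}f_{2}\left[u_{2,k}\right],
\]
where the singular terms cancel: dividing by $\|u_{1,k}\|_{L^{\infty}\left(C\right)}$, invoking Corollary~\ref{cor:only_limit_stationary_state} (so that $u_{i,k}\to0$ uniformly, hence $f_{i}\left[u_{i,k}\right]\to f_{i}\left[0\right]$) and then elliptic regularity, one gets that $\alpha\hat{u}_{1,k}-d\rho_{k}\hat{u}_{2,k}$ converges in $\mathcal{C}_{per}^{2,\beta}\left(\mathbb{R}\right)$ to $w:=\alpha\hat{u}_{1,\infty}-d\rho_{\infty}\hat{u}_{2,\infty}$, which solves $-w''=\alpha\hat{u}_{1,\infty}f_{1}\left[0\right]-\rho_{\infty}\hat{u}_{2,\infty}f_{2}\left[0\right]$. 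Since $\hat{u}_{1,\infty}\hat{u}_{2,\infty}=0$ and both factors are non-negative, $w^{+}=\alpha\hat{u}_{1,\infty}$ and $w^{-}=d\rho_{\infty}\hat{u}_{2,\infty}$, so this is precisely $-w''=\gamma\left[w\right]$.

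To conclude, Proposition~\ref{prop:high_frequency_consequence} forces $w\equiv0$, whence $\hat{u}_{1,\infty}=\hat{u}_{2,\infty}=0$, in contradiction with $\|\hat{u}_{i,\infty}\|_{L^{\infty}\left(C\right)}=1$. I expect the only genuinely delicate point to be keeping track of the competitive term through the two complementary renormalizations: using its non-negativity to obtain the $H^{1}$ bound and hence compactness of $\left(\hat{u}_{1,k},\hat{u}_{2,k}\right)$, its smallness after division by $k\|u_{2,k}\|_{L^{\infty}\left(C\right)}$ to obtain segregation, and its exact cancellation in the combination $\alpha u_{1,k}-du_{2,k}$ to obtain the limiting profile equation $-w''=\gamma\left[w\right]$; the remaining estimates are routine repetitions of those already carried out for Proposition~\ref{prop:limit_points_coexistence_states} and Lemma~\ref{lem:ku/kv_bounded}.
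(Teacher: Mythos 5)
Your proof is correct and follows essentially the same route as the paper's: blow-up by the sup-norms, compactness of the normalized profiles, segregation of the limits, derivation of a limiting equation $-w''=\gamma\left[w\right]$, and the contradiction via Proposition \ref{prop:high_frequency_consequence}. The only (harmless, in fact slightly cleaner) deviation is that you pass to the limit in the exact combination $\alpha u_{1,k}-du_{2,k}$, where the competition terms cancel identically, whereas the paper combines the normalized equations with the limiting ratio $l$ of $\frac{\alpha\|u_{1,k}\|_{L^{\infty}\left(C\right)}}{\|u_{2,k}\|_{L^{\infty}\left(C\right)}}$ and must then kill the residual competition term using a distributional bound on $\int k\|u_{2,k}\|_{L^{\infty}\left(C\right)}\hat{u}_{1,k}\hat{u}_{2,k}\varphi$, a step your version avoids.
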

\begin{proof}
From Lemma \ref{lem:ku/kv_bounded}, it suffices to assume that there
exists a sequence $\left(\left(u_{1},u_{2}\right)\right)_{k>k^{\star}}$
such that $k\|u_{1,k}\|_{L^{\infty}\left(C\right)}\to+\infty$ as
$k\to+\infty$ and to get a contradiction.

With the same notations as in the proof of Lemma \ref{lem:ku/kv_bounded},
up to extraction we can assume that $\hat{u}_{1,k}\to\hat{u}_{1,\infty}$
and $\hat{u}_{2,k}\to\hat{u}_{2,\infty}$ in $\mathcal{C}_{per}^{0,\beta}\left(\mathbb{R}\right)$.
We have for any $i\in\left\{ 1,2\right\} $ $\|\hat{u}_{i,\infty}\|_{L^{\infty}\left(C\right)}=1$,
hence $u_{i,\infty}\neq0$. Considering the limit of the equation
satisfied by $\hat{u}_{2,k}$ in $\mathcal{D}'\left(C\right)$ shows
that $\hat{u}_{1,\infty}\hat{u}_{2,\infty}=0$. Thanks to Lemma \ref{lem:ku/kv_bounded},
up to extraction, we can assume that there exists $l>0$ such that
$\frac{\alpha\|u_{1,k}\|_{L^{\infty}\left(C\right)}}{\|u_{2,k}\|_{L^{\infty}\left(C\right)}}\to l$.
Moreover, considering the equation satisfied by $\hat{u}_{1,k}$ in
$\mathcal{D}'\left(C\right)$ shows that, for any $\varphi\in\mathcal{D}\left(C\right)$:
\[
\int k\|u_{2,k}\|_{L^{\infty}\left(C\right)}\hat{u}_{1,k}\hat{u}_{2,k}\varphi
\]
 is $k$-uniformly bounded.

Multiplying the equation defining $\hat{u}_{1,k}$ by $l$ and subtracting
from it the equation defining $\hat{u}_{2,k}$ yields:
\begin{eqnarray*}
-l\hat{u}_{1,k}''+d\hat{u}_{2,k}'' & = & l\hat{u}_{1,k}f_{1}\left[\|u_{1,k}\|_{L^{\infty}\left(C\right)}\hat{u}_{1,k}\right]-\hat{u}_{2,k}f_{2}\left[\|u_{2,k}\|_{L^{\infty}\left(C\right)}\hat{u}_{2,k}\right]\\
 &  & +\left(\frac{\alpha\|u_{1,k}\|_{L^{\infty}\left(C\right)}}{\|u_{2,k}\|_{L^{\infty}\left(C\right)}}-l\right)k\|u_{2,k}\|_{L^{\infty}\left(C\right)}\hat{u}_{1,k}\hat{u}_{2,k}.
\end{eqnarray*}

Considering it in $\mathcal{D}'\left(C\right)$, passing to the limit
(with, in virtue of Corollary \ref{cor:only_limit_stationary_state},
$\|u_{i,k}\|_{L^{\infty}\left(C\right)}\to0$) and defining $v=l\hat{u}_{1,\infty}-d\hat{u}_{2,\infty}$,
it becomes:
\[
-v''=\gamma\left[v\right].
\]

By classical elliptic regularity, $v$ is actually a solution in $\mathcal{C}_{per}^{2,\beta}\left(\mathbb{R}\right)$.
Then Proposition \ref{prop:high_frequency_consequence} implies $l\hat{u}_{1,\infty}=d\hat{u}_{2,\infty}$,
but together with $\hat{u}_{1,\infty}\hat{u}_{2,\infty}=0$ and the
fact that the pair $\left(u_{1,\infty},u_{2,\infty}\right)$ is non-zero,
this is a contradiction.
\end{proof}
\begin{lem}
\label{lem:ku,kv_bounded_from_below} Provided $k^{\star}$ is large
enough, the following lower bound holds:

\[
\inf_{k>k^{\star}}\inf_{\left(u_{1},u_{2}\right)\in S_{k}}\min\left\{ \min_{\overline{C}}\left(ku_{1}\right),\min_{\overline{C}}\left(ku_{2}\right)\right\} >0
\]
\end{lem}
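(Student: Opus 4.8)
The statement asserts a uniform-in-$k$ lower bound $\inf_{k}\inf_{(u_1,u_2)\in S_k}\min\{\min_{\overline C}(ku_1),\min_{\overline C}(ku_2)\}>0$. My plan is to argue by contradiction: suppose there is a sequence $\big((u_{1,k},u_{2,k})\big)_{k>k^\star}$ of periodic coexistence states and (say, by symmetry, reasoning on the first component, or passing to a subsequence along which $\min_{\overline C}(ku_{1,k})$ is the smaller of the two minima) such that $\min_{\overline C}(ku_{1,k})\to 0$. I will then produce a contradiction with the maximum-principle inequalities of Lemma~\ref{lem:max_principles_coexistence_states} combined with the lower bound $m>0$ on $f_i[0]$, exactly as the remark following that lemma anticipates.

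First I would record the consequences of the previous lemmas. By Lemma~\ref{lem:(ku,kv)_bounded}, $\big((ku_{1,k},ku_{2,k})\big)_{k>k^\star}$ is uniformly bounded in $L^\infty(C)$; by Corollary~\ref{cor:only_limit_stationary_state}, $(u_{1,k},u_{2,k})\to(0,0)$ uniformly, so $\max u_{i,k}\to 0$ and hence $f_i[\max u_{i,k}]\to f_i[0]$ uniformly (by $(\mathcal H_1)$ and periodicity), in particular $\min f_i[\min u_{i,k}]\to\min f_i[0]=m_i\ge m>0$ eventually. Now consider $w_{i,k}:=ku_{i,k}$. These satisfy, from the stationary system,
\[
-w_{1,k}''=w_{1,k}\,f_1[u_{1,k}]-w_{1,k}w_{2,k},\qquad
-d\,w_{2,k}''=w_{2,k}\,f_2[u_{2,k}]-\alpha\,w_{2,k}w_{1,k}.
\]
Since $\big(w_{1,k},w_{2,k}\big)$ is bounded in $L^\infty$, elliptic regularity gives a uniform $\mathcal C^{2,\beta}_{per}$ bound, hence (up to extraction) $w_{i,k}\to w_{i,\infty}$ in $\mathcal C^{2,\beta}_{per}(\mathbb R)$ with $w_{i,\infty}\ge 0$ and
\[
-w_{1,\infty}''=w_{1,\infty}\big(f_1[0]-w_{2,\infty}\big),\qquad
-d\,w_{2,\infty}''=w_{2,\infty}\big(f_2[0]-\alpha\,w_{1,\infty}\big).
\]
The contradiction assumption says $\min_{\overline C}w_{1,\infty}=0$. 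If $w_{1,\infty}\equiv 0$, then feeding this into the first inequality of Lemma~\ref{lem:max_principles_coexistence_states}, $k\min u_{2,k}\le\max f_1[\max u_{1,k}]\le M_1$, shows $w_{2,\infty}$ is bounded, and into the third inequality, $\min f_1[\min u_{1,k}]\le k\max u_{2,k}$; since the left side is $\ge m>0$ eventually we get $\max w_{2,\infty}\ge m>0$; but then the equation for $w_{1,k}$ cannot have its minimum tending to $0$ — more directly, apply the strong maximum principle / Harnack to $w_{1,k}$: it is a non-negative non-zero solution of $-w_{1,k}''+c_k w_{1,k}=0$ with $c_k:=w_{2,k}-f_1[u_{1,k}]$ uniformly bounded, so by the periodic Harnack inequality $\max_{\overline C}w_{1,k}\le \kappa\min_{\overline C}w_{1,k}$ for a $k$-independent constant $\kappa$, forcing $\max w_{1,k}\to 0$, hence $w_{1,\infty}\equiv 0$; but the fourth inequality of Lemma~\ref{lem:max_principles_coexistence_states} reads $\min f_2[\min u_{2,k}]\le\alpha k\max u_{1,k}=\alpha\max w_{1,k}\to 0$, contradicting $\min f_2[\min u_{2,k}]\ge m>0$.

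If instead $w_{1,\infty}\not\equiv 0$ but still has a zero, then $w_{1,\infty}$ is a non-negative non-zero $\mathcal C^2_{per}$ solution of $-w_{1,\infty}''+\big(w_{2,\infty}-f_1[0]\big)w_{1,\infty}=0$ with bounded zeroth-order coefficient, so the elliptic strong minimum principle forces $w_{1,\infty}\gg 0$, contradicting the existence of a zero. Thus in all cases we reach a contradiction, and symmetrically for the second component; the same extraction handles whichever component realizes the smaller minimum. The cleanest unified way to phrase it is: the Harnack inequality makes $\min_{\overline C}w_{i,k}$ comparable to $\|w_{i,k}\|_{L^\infty(C)}$ uniformly in $k$, so it suffices to bound the latter from below, and $\|w_{1,k}\|_{L^\infty(C)}\to 0$ would force (via the fourth inequality of Lemma~\ref{lem:max_principles_coexistence_states}) $\min f_2[\min u_{2,k}]\to 0$, impossible; symmetrically for $w_{2,k}$ via the second inequality.

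\textbf{Main obstacle.} The delicate point is the uniform Harnack / comparability step: one must make sure the constant in $\max_{\overline C}w_{i,k}\le\kappa\min_{\overline C}w_{i,k}$ is genuinely independent of $k$, which requires the uniform $L^\infty$ bound on the zeroth-order coefficients $w_{2,k}-f_1[u_{1,k}]$ (resp. $\alpha w_{1,k}-f_2[u_{2,k}]$) — this is exactly what Lemma~\ref{lem:(ku,kv)_bounded} supplies, together with the uniform bound $|f_i|\le$ const that follows from $(\mathcal H_1)$–$(\mathcal H_3)$ and Corollary~\ref{cor:only_limit_stationary_state}. Everything else is a routine combination of Lemma~\ref{lem:max_principles_coexistence_states}, the positivity $m>0$, and elliptic regularity already used repeatedly above.
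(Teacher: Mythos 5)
Your proof is correct, and its streamlined version takes a genuinely more direct route than the paper's. The paper also rescales to $U_{i,k}=ku_{i,k}$, extracts limits $U_{i,\infty}$ in $\mathcal{C}_{per}^{0,\beta}$, rules out $U_{i,\infty}=0$ with the third and fourth inequalities of Lemma \ref{lem:max_principles_coexistence_states}, gets $U_{i,\infty}\gg0$ from the strong minimum principle, and then obtains the quantitative bound by dividing the limit equation by $U_{1,\infty}$ and integrating over a period, which yields $\int_{C}f_{1}\left[0\right]\leq\int_{C}U_{2,\infty}$ (and symmetrically), hence an $L^{1}$ lower bound independent of the limit point; Harnack is applied to the \emph{limit} functions and the bound is transferred back to $ku_{i,k}$ by uniform convergence plus a standard compactness argument. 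You instead apply Harnack directly to $w_{i,k}=ku_{i,k}$ at finite $k$ \textemdash{} legitimate because the zeroth-order coefficients $w_{2,k}-f_{1}\left[u_{1,k}\right]$ and $\alpha w_{1,k}-f_{2}\left[u_{2,k}\right]$ are uniformly bounded thanks to Lemma \ref{lem:(ku,kv)_bounded} and $\left(\mathcal{H}_{1}\right)$\textendash$\left(\mathcal{H}_{3}\right)$ \textemdash{} and you lower-bound $\|w_{i,k}\|_{L^{\infty}\left(C\right)}$ straight from the third and fourth inequalities of Lemma \ref{lem:max_principles_coexistence_states} combined with Corollary \ref{cor:only_limit_stationary_state} and $m>0$. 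This bypasses the limit system and the logarithmic-derivative identity entirely, so your ``unified'' closing paragraph is by itself a complete and shorter proof; the preceding two-case discussion and the extraction of $\mathcal{C}^{2,\beta}$ limits are redundant. Two small remarks: for $w_{2,k}$ the relevant estimate is the \emph{third} inequality of Lemma \ref{lem:max_principles_coexistence_states} (namely $\min f_{1}\left[\min u_{1}\right]\leq k\max u_{2}$), not the second one that you cite in the last paragraph (you do use the correct one earlier); and the paper's detour through the limit pair $\left(U_{1,\infty},U_{2,\infty}\right)$ is not wasted effort, since that positive pair is reused verbatim in the proof of Lemma \ref{lem:A_strongly_positive}, so with your shortcut that construction would have to be reinstated there.
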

\begin{proof}
Let $\left(\left(u_{1,k},u_{2,k}\right)\right)_{k>k^{\star}}$. For
any $i\in\left\{ 1,2\right\} $ and any $k>k^{\star}$, let $U_{i,k}=ku_{i,k}$.
$\left(U_{1,k},U_{2,k}\right)$ satisfies the following system:
\[
\left\{ \begin{matrix}-U_{1,k}''=U_{1,k}f_{1}\left[\frac{U_{1,k}}{k}\right]-U_{1,k}U_{2,k}\\
-dU_{2,k}''=U_{2,k}f_{2}\left[\frac{U_{2,k}}{k}\right]-\alpha U_{1,k}U_{2,k}.
\end{matrix}\right.
\]

Since $U_{1,k}$ and $U_{2,k}$ are $k$-uniformly bounded in $L^{\infty}\left(C\right)$
in virtue of Lemma \ref{lem:(ku,kv)_bounded}, we can prove with the
same arguments as before that, for any $i\in\left\{ 1,2\right\} $
and up to extraction, $U_{i,k}$ converges in $\mathcal{C}_{per}^{0,\beta}\left(\mathbb{R}\right)$
to some $U_{i,\infty}\geq0$, and by Lemma \ref{lem:max_principles_coexistence_states}
(third and fourth inequalities), $U_{i,\infty}\neq0$. The limits
satisfy the remarkable following system:
\[
\left\{ \begin{matrix}-U_{1,\infty}''=U_{1,\infty}f_{1}\left[0\right]-U_{1,\infty}U_{2,\infty}\\
-dU_{2,\infty}''=U_{2,\infty}f_{2}\left[0\right]-\alpha U_{1,\infty}U_{2,\infty}.
\end{matrix}\right.
\]

At first this system is to be understood in the distributional sense,
but once more thanks to classical elliptic regularity $U_{1,\infty}$
and $U_{2,\infty}$ are actually in $\mathcal{C}_{per}^{2,\beta}\left(\mathbb{R}\right)$.
Thanks to the elliptic strong minimum principle, for any $i\in\left\{ 1,2\right\} $,
$U_{i,\infty}\gg0$.

In $C$, $-\frac{U_{1,\infty}''}{U_{1,\infty}}=f_{1}\left[0\right]-U_{2,\infty}\leq M_{1}$.
Integration over $C$ yields:
\[
\int_{C}f_{1}\left[0\right]=-\int_{C}\left|\frac{U_{1,\infty}'}{U_{1,\infty}}\right|^{2}+\int_{C}U_{2,\infty}\leq\int_{C}U_{2,\infty}.
\]

Similarly,
\[
\int_{C}f_{2}\left[0\right]\leq\int_{C}U_{1,\infty}.
\]

Then $\left(\mathcal{H}_{2}\right)$ shows that $\left(U_{1,\infty},U_{2,\infty}\right)$
is at positive distance of the origin in $L^{1}\left(C\right)$, and
then in $L^{\infty}\left(C\right)$ by classical embeddings. Harnack\textquoteright s
inequality yields eventually that $\min\left(\min\limits _{\overline{C}}\left(U_{1,\infty}\right),\min\limits _{\overline{C}}\left(U_{2,\infty}\right)\right)$
is bounded from below by a real number $\epsilon>0$. By uniform convergence
and provided $k^{\star}$ is large enough, the infimum of the sequence
$\left(\min\left\{ \min\limits _{\overline{C}}\left(ku_{1,k}\right),\min\limits _{\overline{C}}\left(ku_{2,k}\right)\right\} \right)_{k>k^{\star}}$
is greater than, say, $\frac{3\epsilon}{4}$. This $\epsilon$ depends
on $m$, $C$, but neither on the limit point $\left(U_{1,\infty},U_{2,\infty}\right)$
nor on the choice of a convergent subsequence of $\left(\left(u_{1},u_{2}\right)\right)_{k>k^{\star}}$,
whence the bound holds for any convergent subsequence of $\left(\left(u_{1},u_{2}\right)\right)_{k>k^{\star}}$.
Furthermore, the bound does not depend on the choice of the sequence
$\left(\left(u_{1},u_{2}\right)\right)_{k>k^{\star}}$ itself, whence
it holds for any convergent subsequence of any sequence.

The conclusion on the whole set is a standard compactness argument%
.
\end{proof}

\subsubsection{Instability of periodic coexistence states close to $\left(0,0\right)$}
\begin{lem}
\label{lem:A_strongly_positive}Provided $k^{\star}$ is large enough,
for any $\left(u_{1},u_{2}\right)\in S$, the differential operator
$\mathcal{A}_{\left(u_{1},u_{2}\right)}:\mathcal{C}_{per}^{2}\left(\mathbb{R}\right)\to\mathcal{C}_{per}\left(\mathbb{R}\right)$
defined as: 
\[
\mathcal{A}_{\left(u_{1},u_{2}\right)}=\left(\begin{matrix}\frac{\mbox{d}^{2}}{\mbox{d}x^{2}}+g_{1}\left[u_{1}\right]-ku_{2} & ku_{1}\\
\alpha ku_{2} & d\frac{\mbox{d}^{2}}{\mbox{d}x^{2}}+g_{2}\left[u_{2}\right]-\alpha ku_{1}
\end{matrix}\right)
\]
 is strongly positive.
\end{lem}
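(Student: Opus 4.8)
The plan is to read $\mathcal{A}_{(u_1,u_2)}$ as the elliptic part of the cooperative system $(\mathcal{M})$ linearized at the stationary state corresponding to $(u_1,u_2)$, and to deduce strong positivity of the semigroup it generates (equivalently, strong positivity of $\mathcal{A}_{(u_1,u_2)}$ in the operator-theoretic sense used for Mora's and Krein--Rutman's theorems) from the scalar parabolic strong maximum principle together with the full coupling of the system. The two structural facts that matter are that the off-diagonal coefficients of $\mathcal{A}_{(u_1,u_2)}$, namely $ku_1$ in the first line and $\alpha ku_2$ in the second, are nonnegative, so the system is cooperative, and are in fact everywhere positive, so the system is fully coupled; the latter holds precisely because any $(u_1,u_2)\in S$ satisfies $u_1\in\langle 0,\tilde{u}_1\rangle$ and $u_2\in\langle 0,\tilde{u}_2\rangle$, hence $u_1\gg 0$ and $u_2\gg 0$.

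Concretely, I would argue as follows. First, by $(\mathcal{H}_1)$ and $u_i\in\mathcal{C}_{per}^2(\mathbb{R})$ the coefficients $g_i[u_i]-ku_j$ and $ku_i$ are bounded, periodic and continuous, so the Cauchy problem $\partial_t(w_1,w_2)=\mathcal{A}_{(u_1,u_2)}(w_1,w_2)$ with periodicity conditions is well posed on $\mathcal{C}_{per}(\mathbb{R},\mathbb{R}^2)$ and, being cooperative (quasimonotone), generates a positive semigroup $(T_t)_{t>0}$ (the nonnegative cone is invariant, by quasimonotonicity and the weak maximum principle). Next, pick $(\varphi_1,\varphi_2)>(0,0)$, let $(w_1,w_2)$ be the corresponding solution, so $(w_1,w_2)\geq(0,0)$, and suppose for contradiction that $w_1(t_0,x_0)=0$ for some $t_0>0$ and some $x_0$. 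Since $ku_1w_2\geq 0$, the scalar function $w_1$ is a nonnegative supersolution of $\partial_t-\partial_{xx}-(g_1[u_1]-ku_2)$, so the scalar parabolic strong maximum principle, in the periodic setting ($C$ being compact and boundaryless), forces $w_1\equiv 0$ on $[0,t_0]\times\mathbb{R}$; plugging this into the first equation gives $ku_1w_2\equiv 0$ there, and $u_1\gg 0$ then yields $w_2\equiv 0$ on $[0,t_0]\times\mathbb{R}$, so that $(\varphi_1,\varphi_2)=(0,0)$, a contradiction. Hence $w_1(t,\cdot)\gg 0$ for every $t>0$, and symmetrically $w_2(t,\cdot)\gg 0$; thus $T_t(\varphi_1,\varphi_2)\gg(0,0)$ for every $t>0$, which is the claimed strong positivity. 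I would also record, for use in the stability discussion to come, that since $C$ is bounded the parabolic smoothing makes each $T_t$ compact, so by the Krein--Rutman theorem $\mathcal{A}_{(u_1,u_2)}$ possesses a simple dominant periodic principal eigenvalue with a positive eigenfunction.

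I do not expect a genuine obstacle: this is the standard strong maximum principle for cooperative, fully coupled parabolic systems, and the only points requiring a little care are to invoke it in the periodic (boundaryless) setting rather than citing a Dirichlet statement, and to keep clear that full coupling — the step ruling out the ``diagonal'' scenario $w_2\not\equiv 0$, $w_1\equiv 0$ — is exactly where the strict positivity $u_1\gg 0$, $u_2\gg 0$ of a coexistence state is used. The hypothesis that $k^\star$ be large enough plays no substantive role here beyond fixing the range $k>k^\star$ on which $S_k$ is considered; cooperativity and full coupling of $\mathcal{A}_{(u_1,u_2)}$ hold for every $k>k^\star$ and every $(u_1,u_2)\in S_k$.
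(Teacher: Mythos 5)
Your argument is correct, but it takes a genuinely different route from the paper's. The paper does not go through the parabolic semiflow: it invokes a criterion from Figueiredo--Mitidieri for cooperative elliptic systems (strong positivity follows from the existence of a pair of positive functions whose image under $-\mathcal{A}_{\left(u_{1},u_{2}\right)}$ is non-negative) and verifies it asymptotically, applying $-\mathcal{A}_{\left(u_{1,k},u_{2,k}\right)}$ to the rescaled limits $\left(U_{1,\infty},U_{2,\infty}\right)$ of $\left(ku_{1,k},ku_{2,k}\right)$ built in the proof of Lemma \ref{lem:ku,kv_bounded_from_below} and concluding by uniform convergence and a compactness argument; this is precisely where ``$k^{\star}$ large enough'' enters the paper's proof, so your closing remark that largeness of $k$ is not substantive is accurate for your argument but marks a real difference with the paper's. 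You instead derive strong positivity directly from the structure of $\mathcal{A}_{\left(u_{1},u_{2}\right)}$: cooperativity plus full coupling (the off-diagonal coefficients $ku_{1}$ and $\alpha ku_{2}$ are everywhere positive because a coexistence state satisfies $u_{1}\gg0$, $u_{2}\gg0$), the scalar parabolic strong maximum principle in the periodic boundaryless setting, and the standard bootstrap ruling out a component vanishing at a positive time; the two-step contradiction you run ($w_{1}\equiv0$ backwards in time, then $ku_{1}w_{2}\equiv0$ forces $w_{2}\equiv0$) is sound, and the same argument applied to $\left(c-\mathcal{A}_{\left(u_{1},u_{2}\right)}\right)^{-1}$ for $c$ large gives the elliptic formulation of strong positivity if one prefers to match the paper's wording. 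Your route buys robustness and economy of hypotheses: it works for every $k>k^{\star}$ and every $\left(u_{1},u_{2}\right)\in S$ without any segregation asymptotics, and it delivers in passing exactly what Proposition \ref{prop:unstable_s_s} consumes (a compact, strongly positive linearized Poincar\'e map, hence the periodic principal eigenvalue via Krein--Rutman). The paper's route buys brevity by outsourcing the maximum principle to the cited reference and recycling objects already constructed, at the price of a sign-sensitive verification of the supersolution criterion in the limit $k\to+\infty$ (with the conventions of Proposition \ref{prop:unstable_s_s}, the limit of $-\mathcal{A}_{\left(u_{1,k},u_{2,k}\right)}$ applied to $\left(U_{1,\infty},U_{2,\infty}\right)$ comes out with the opposite sign to the one displayed, so that step requires care), a delicacy your direct argument avoids altogether.
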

\begin{proof}
It is well-known that $\mathcal{A}_{\left(u_{1},u_{2}\right)}$ is
strongly positive (i.e. satisfies the strong minimum principle) if
there exists a pair of positive functions whose image by $-\mathcal{A}_{\left(u_{1},u_{2}\right)}$
is itself non-negative (see for instance Figueiredo\textendash Mitidieri
\cite{Figueiredo_Mit}). From $\left(\mathcal{H}_{1}\right)$, if
$k$ is large enough, there exists a constant $R>0$ which depends
only on $x\mapsto\partial_{1}f_{1}\left(0,x\right)$ and $x\mapsto\partial_{1}f_{2}\left(0,x\right)$
 such that: 
\[
\left\{ \begin{matrix}\partial_{1}f_{1}\left[u_{1}\right]\in\left[-R,0\right]\,\\
\partial_{1}f_{2}\left[u_{2}\right]\in\left[-R,0\right].
\end{matrix}\right.
\]

From here, it is easy to check that, up to extraction and using the
notations of the proof of Lemma \ref{lem:ku,kv_bounded_from_below},
\[
-\mathcal{A}_{\left(u_{1,k},u_{2,k}\right)}\left(\begin{matrix}U_{1,\infty}\\
U_{2,\infty}
\end{matrix}\right)\to\left(\begin{matrix}U_{1,\infty}U_{2,\infty}\\
\alpha U_{1,\infty}U_{2,\infty}
\end{matrix}\right)
\]
uniformly in $C$ as $k\to+\infty$. 

This limit being positive, thanks to standard compactness arguments,
we get indeed the claimed statement. 
\end{proof}
\begin{prop}
\label{prop:unstable_s_s} For any $k>k^{\star}$, any $\left(u_{1},u_{2}\right)\in S$
is unstable.
\end{prop}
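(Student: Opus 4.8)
The plan is to show that the periodic principal eigenvalue $\lambda_{1,per}\left(-\mathcal{A}_{\left(u_{1},u_{2}\right)}\right)$ of the operator of Lemma~\ref{lem:A_strongly_positive} is negative, and to deduce instability from this. Indeed, a direct computation of the linearization of $\left(\mathcal{M}\right)$ at the intermediate state $\left(u_{1},\tilde{u}_{2}-u_{2}\right)$ associated with $\left(u_{1},u_{2}\right)$ shows that its elliptic part is exactly $\mathcal{A}_{\left(u_{1},u_{2}\right)}$ (the cancellations are the same as those arising in the passage from $\left(\mathcal{P}\right)$ to $\left(\mathcal{M}\right)$); by the principle of linearized instability, read off Mora's theorem just as in the stability proof for the extinction states but in the opposite direction, a periodic stationary state of $\left(\mathcal{M}\right)$ whose linearization admits a positive principal eigenvalue is unstable. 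Since $\left(\mathcal{P}\right)$ and $\left(\mathcal{M}\right)$ are conjugate through the affine change of unknown $u_{2}\mapsto\tilde{u}_{2}-u_{2}$, this instability transfers to $\left(u_{1},u_{2}\right)$ for $\left(\mathcal{P}\right)$. So everything reduces to the sign of $\lambda_{1,per}\left(-\mathcal{A}_{\left(u_{1},u_{2}\right)}\right)$.

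The heart of the argument is to apply $\mathcal{A}_{\left(u_{1},u_{2}\right)}$ to the positive periodic pair $\left(u_{1},u_{2}\right)$ itself. Using the stationary equations solved by $u_{1}$ and $u_{2}$ and the identity $g_{i}=f_{i}+u\partial_{1}f_{i}$, the off-diagonal couplings of $\mathcal{A}_{\left(u_{1},u_{2}\right)}$ exactly kill the competition terms, leaving
\[
\mathcal{A}_{\left(u_{1},u_{2}\right)}\left(u_{1},u_{2}\right)=\left(u_{1}^{2}\,\partial_{1}f_{1}\left[u_{1}\right]+ku_{1}u_{2}\,,\ u_{2}^{2}\,\partial_{1}f_{2}\left[u_{2}\right]+\alpha ku_{1}u_{2}\right).
\]
I then invoke the uniform a priori estimates on $S_{k}$: by Lemmas~\ref{lem:(ku,kv)_bounded} and~\ref{lem:ku,kv_bounded_from_below} (together with the usual compactness argument), there are constants $0<\epsilon\leq C<+\infty$, independent of $k>k^{\star}$ and of $\left(u_{1},u_{2}\right)\in S_{k}$, with $\epsilon\leq ku_{i}\leq C$ on $\overline{C}$; and by $\left(\mathcal{H}_{1}\right)$, for $k$ large $\left|\partial_{1}f_{i}\left[u_{i}\right]\right|\leq R$ with $R$ depending only on $\partial_{1}f_{i}\left(0,\cdot\right)$, as in the proof of Lemma~\ref{lem:A_strongly_positive}. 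Hence $ku_{1}u_{2}=\tfrac{1}{k}\left(ku_{1}\right)\left(ku_{2}\right)\geq\tfrac{\epsilon^{2}}{k}$ whereas $\left|u_{i}^{2}\partial_{1}f_{i}\left[u_{i}\right]\right|\leq\tfrac{RC^{2}}{k^{2}}$, so each component of $\mathcal{A}_{\left(u_{1},u_{2}\right)}\left(u_{1},u_{2}\right)$ is at least $\tfrac{\epsilon^{2}}{k}-\tfrac{RC^{2}}{k^{2}}$. Updating $k^{\star}$ one last time, this is positive for every $k>k^{\star}$, uniformly, so that $\mathcal{A}_{\left(u_{1},u_{2}\right)}\left(u_{1},u_{2}\right)\gg\left(0,0\right)$.

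It remains to pass from this positive periodic strict supersolution to $\lambda_{1,per}\left(-\mathcal{A}_{\left(u_{1},u_{2}\right)}\right)<0$. Let $\varphi\gg\left(0,0\right)$ be a periodic principal eigenfunction, $-\mathcal{A}_{\left(u_{1},u_{2}\right)}\varphi=\lambda_{1,per}\left(-\mathcal{A}_{\left(u_{1},u_{2}\right)}\right)\varphi$, and set $t^{\star}=\max\left\{ t>0\ :\ t\varphi\geq\left(u_{1},u_{2}\right)\right\}$, which is well defined and positive since both $\varphi$ and $\left(u_{1},u_{2}\right)$ are periodic and positive. If $\lambda_{1,per}\left(-\mathcal{A}_{\left(u_{1},u_{2}\right)}\right)\geq0$, then $w:=t^{\star}\varphi-\left(u_{1},u_{2}\right)\geq\left(0,0\right)$ satisfies $-\mathcal{A}_{\left(u_{1},u_{2}\right)}w=t^{\star}\lambda_{1,per}\left(-\mathcal{A}_{\left(u_{1},u_{2}\right)}\right)\varphi+\mathcal{A}_{\left(u_{1},u_{2}\right)}\left(u_{1},u_{2}\right)\gg\left(0,0\right)$, although $w$ vanishes somewhere in at least one component; the strong maximum principle granted by the strong positivity of $\mathcal{A}_{\left(u_{1},u_{2}\right)}$ (Lemma~\ref{lem:A_strongly_positive}) then forces $w\equiv\left(0,0\right)$, i.e.\ $\left(u_{1},u_{2}\right)=t^{\star}\varphi$, whence $\mathcal{A}_{\left(u_{1},u_{2}\right)}\left(u_{1},u_{2}\right)=-t^{\star}\lambda_{1,per}\left(-\mathcal{A}_{\left(u_{1},u_{2}\right)}\right)\varphi\leq\left(0,0\right)$, a contradiction. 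Therefore $\lambda_{1,per}\left(-\mathcal{A}_{\left(u_{1},u_{2}\right)}\right)<0$ and $\left(u_{1},u_{2}\right)$ is unstable. The only genuinely delicate step is the strict positivity of $\mathcal{A}_{\left(u_{1},u_{2}\right)}\left(u_{1},u_{2}\right)$: it rests entirely on the uniform lower bound $ku_{1}u_{2}\gtrsim1/k$ of Lemma~\ref{lem:ku,kv_bounded_from_below}, the point being that the competition contribution is of order $1/k$ whereas the curvature correction $u_{i}^{2}\partial_{1}f_{i}\left[u_{i}\right]$ is only of order $1/k^{2}$, so the sign is forced once $k$ is large; the rest is bookkeeping.
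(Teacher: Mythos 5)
Your proposal is correct and, in its core, is the same argument as the paper's: you reduce instability (via Mora's theorem applied to the linearization of $(\mathcal{M})$) to the sign of $\lambda_{1,per}\left(-\mathcal{A}_{\left(u_{1},u_{2}\right)}\right)$, you apply $\mathcal{A}_{\left(u_{1},u_{2}\right)}$ to $\left(u_{1},u_{2}\right)$ itself so that the competition couplings cancel and the stationary equations leave $\left(u_{1}^{2}\partial_{1}f_{1}\left[u_{1}\right]+ku_{1}u_{2},\,u_{2}^{2}\partial_{1}f_{2}\left[u_{2}\right]+\alpha ku_{1}u_{2}\right)$, and you use the a priori bounds of Lemmas \ref{lem:(ku,kv)_bounded} and \ref{lem:ku,kv_bounded_from_below} to see that the $O\left(1/k\right)$ competition term beats the $O\left(1/k^{2}\right)$ correction. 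The only real divergence is the final step: the paper concludes in one line from the Krein--Rutman characterization $\lambda_{1,per}\left(-\mathcal{A}\right)=\inf\left\{ \lambda\,:\,\exists\varphi\gg0,\ \left(-\mathcal{A}-\lambda\right)\varphi\leq0\right\}$, taking $\varphi=\left(u_{1},u_{2}\right)$ and $\lambda=-\min\left\{ \min_{\overline{C}}\left(ku_{2}-Ru_{1}\right),\min_{\overline{C}}\left(\alpha ku_{1}-Ru_{2}\right)\right\} <0$, whereas you re-prove the implication ``positive strict subsolution implies $\lambda_{1,per}<0$'' by a sliding/touching argument against the principal eigenfunction combined with the strong maximum principle of Lemma~\ref{lem:A_strongly_positive}; both are valid, the paper's route being shorter while yours only needs the qualitative fact $\mathcal{A}_{\left(u_{1},u_{2}\right)}\left(u_{1},u_{2}\right)\gg0$ rather than the pointwise comparison with $\left(u_{1},u_{2}\right)$. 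Two small repairs: your $t^{\star}$ must be $\min\left\{ t>0\,:\,t\varphi\geq\left(u_{1},u_{2}\right)\right\}$, not the maximum (the set you wrote is upward-closed, so it has no maximum); it is this minimality that produces the touching point you then use. And, as in the paper, a word is needed on why the periodic principal eigenpair of $-\mathcal{A}_{\left(u_{1},u_{2}\right)}$ exists at all (Krein--Rutman together with the compact embedding, which the paper records before its eigenvalue estimate).
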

\begin{proof}
Thanks to Mora\textquoteright s theorem \cite{Mora_1983}, we know
that $\left(u_{1},u_{2}\right)$ is unstable if the principal eigenvalue
of the elliptic part of the monotone problem $\left(\mathcal{M}\right)$
linearized at $\left(u_{1},J\left(u_{2}\right)\right)$ is negative.
It is easy to verify that the linearized operator is in fact:
\[
\mathcal{A}_{\left(u_{1},u_{2}\right)}=\left(\begin{matrix}\frac{\mbox{d}^{2}}{\mbox{d}x^{2}}+g_{1}\left[u_{1}\right]-ku_{2} & ku_{1}\\
\alpha ku_{2} & d\frac{\mbox{d}^{2}}{\mbox{d}x^{2}}+g_{2}\left[u_{2}\right]-\alpha ku_{1}
\end{matrix}\right)
\]

$\mathcal{A}_{\left(u_{1},u_{2}\right)}$ being strongly positive
(see Lemma \ref{lem:A_strongly_positive}), it is injective and, up
to a restriction of its codomain, it is invertible. Krein\textendash Rutman\textquoteright s
theorem and a well-known routine involving the compact canonical embedding
$\mathcal{C}^{2,\beta}\left(C\right)\hookrightarrow\mathcal{C}_{loc}^{0,\beta}\left(C\right)$
prove the existence of the periodic principal eigenvalue $\lambda_{1,per}\left(-\mathcal{A}_{\left(u_{1},u_{2}\right)}\right)$. 

Now, we have to prove that $\lambda_{1,per}\left(-\mathcal{A}_{\left(u_{1},u_{2}\right)}\right)<0$.
Recall the following characterization from Krein\textendash Rutman\textquoteright s
theorem:
\[
\lambda_{1,per}\left(-\mathcal{A}_{\left(u_{1},u_{2}\right)}\right)=\inf\left\{ \lambda\in\mathbb{R}\ |\ \exists\varphi\in\mathcal{C}_{per}^{2}\left(\mathbb{R},\left(0,+\infty\right)^{2}\right)\ \left(-\mathcal{A}_{\left(u_{1},u_{2}\right)}-\lambda\right)\varphi\leq0\ \mbox{in}\ \mathbb{R}\right\} .
\]

Therefore, we only need to find some $\lambda<0$ and some $\varphi\in\mathcal{C}_{per}^{2}\left(\mathbb{R},\left(0,+\infty\right)^{2}\right)$
satisfying: 
\[
\left(-\mathcal{A}_{\left(u_{1},u_{2}\right)}-\lambda\right)\varphi\leq0.
\]

Using $\left(\mathcal{H}_{1}\right)$, it is easy to check that there
exists a constant $R>0$ which depends only on $x\mapsto\partial_{1}f_{1}\left(0,x\right)$
and $x\mapsto\partial_{1}f_{2}\left(0,x\right)$ such that:

\begin{eqnarray*}
\left(-\mathcal{A}_{\left(u_{1},u_{2}\right)}\right)\left(\begin{matrix}u_{1}\\
u_{2}
\end{matrix}\right) & = & \left(\begin{matrix}-u_{1}^{2}\partial_{1}f_{1}\left[u_{1}\right]-ku_{1}u_{2}\\
-u_{2}^{2}\partial_{1}f_{2}\left[u_{2}\right]-\alpha ku_{1}u_{2}
\end{matrix}\right)\\
 & \leq & \left(\begin{matrix}\left(Ru_{1}-ku_{2}\right)u_{1}\\
\left(Ru_{2}-\alpha ku_{1}\right)u_{2}
\end{matrix}\right)\\
 & \leq & -\min\left\{ \min_{\overline{C}}\left(ku_{2}-Ru_{1}\right),\min_{\overline{C}}\left(\alpha ku_{1}-Ru_{2}\right)\right\} \left(\begin{matrix}u_{1}\\
u_{2}
\end{matrix}\right).
\end{eqnarray*}

In virtue of Lemma \ref{lem:ku,kv_bounded_from_below}, provided $k^{\star}$
is large enough, for any $K>k^{\star}$ and any $\left(u_{1,K},u_{2,K}\right)\in S_{K}$:
\[
\min\left\{ \min_{\overline{C}}\left(Ku_{2,K}-Ru_{1,K}\right),\min_{\overline{C}}\left(\alpha Ku_{1,K}-Ru_{2,K}\right)\right\} >0.
\]

Consequently it holds for $k$ and $\left(u_{1},u_{2}\right)$.

Now, if we define $\lambda$ as $-\min\left\{ \min\limits _{\overline{C}}\left(ku_{2}-Ru_{1}\right),\min\limits _{\overline{C}}\left(\alpha ku_{1}-Ru_{2}\right)\right\} $
and $\varphi$ as $\left(u_{1},u_{2}\right)$, it is obvious that
$\left(-\mathcal{A}_{\left(u_{1},u_{2}\right)}-\lambda\right)\varphi\leq0$.
Therefore, $\left(u_{1},u_{2}\right)$ is unstable.
\end{proof}

\subsection{Counter-propagation}

In this subsection, we prove the so-called counter-propagation hypothesis.
Let us recall from Fang\textendash Zhao \cite{Fang_Zhao_2011} that,
since every intermediate periodic stationary state is unstable (Proposition
\ref{prop:unstable_s_s}), their set is totally unordered.
\begin{prop}
Let $k>k^{\star}$ and $\left(u_{1},u_{2}\right)\in S$. 

Let $c_{+}^{\star}\left(\left(u_{1},\tilde{u}_{2}-u_{2}\right),\left(\tilde{u}_{1},\tilde{u}_{2}\right)\right)\in\mathbb{R}$
and $c_{-}^{\star}\left(\left(u_{1},\tilde{u}_{2}-u_{2}\right),\left(0,0\right)\right)\in\mathbb{R}$
be the spreading speeds associated with front-like initial data connecting
respectively $\left(\tilde{u}_{1},\tilde{u}_{2}\right)$ to $\left(u_{1},\tilde{u}_{2}-u_{2}\right)$
and $\left(u_{1},\tilde{u}_{2}-u_{2}\right)$ to $\left(0,0\right)$. 

Then:
\[
c_{+}^{\star}\left(\left(u_{1},\tilde{u}_{2}-u_{2}\right),\left(\tilde{u}_{1},\tilde{u}_{2}\right)\right)+c_{-}^{\star}\left(\left(u_{1},\tilde{u}_{2}-u_{2}\right),\left(0,0\right)\right)>0.
\]
\end{prop}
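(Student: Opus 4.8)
I would prove this through the linearisation of the cooperative system $\left(\mathcal{M}\right)$ at the intermediate state, reducing both spreading speeds to one-sided linear spreading speeds and then invoking the strict instability already established.

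\smallskip

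The starting observation is that the two monostable sub-problems in the statement are invasions \emph{of the same state} $\left(u_{1},\tilde{u}_{2}-u_{2}\right)$: in the order interval $[(u_{1},\tilde{u}_{2}-u_{2}),(\tilde{u}_{1},\tilde{u}_{2})]$ the stable upper state $\left(\tilde{u}_{1},\tilde{u}_{2}\right)$ invades the unstable state $\left(u_{1},\tilde{u}_{2}-u_{2}\right)$ by expanding to the right, while in $[(0,0),(u_{1},\tilde{u}_{2}-u_{2})]$ the stable lower state $\left(0,0\right)$ invades the unstable state $\left(u_{1},\tilde{u}_{2}-u_{2}\right)$ by expanding to the left; only the direction of propagation differs. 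Hence the relevant linear operator is, in both cases, $\mathcal{A}_{\left(u_{1},u_{2}\right)}$ of Lemma \ref{lem:A_strongly_positive}. I would introduce, for $\nu\in\mathbb{R}$, the periodic principal eigenvalue (growth rate) $K\left(\nu\right)$ of the conjugated operator $\phi\mapsto e^{-\nu x}\mathcal{A}_{\left(u_{1},u_{2}\right)}\bigl(e^{\nu x}\phi\bigr)$, which is well defined by the same Krein--Rutman routine as in the proof of Proposition \ref{prop:unstable_s_s}; standard arguments give that $\nu\mapsto K\left(\nu\right)$ is convex and tends to $+\infty$ as $\nu\to\pm\infty$, and Proposition \ref{prop:unstable_s_s} gives
\[
K\left(0\right)=-\lambda_{1,per}\left(-\mathcal{A}_{\left(u_{1},u_{2}\right)}\right)>0.
\]

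\smallskip

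Then, by the lower estimates on spreading speeds for monotone monostable semiflows (Weinberger \cite{Weinberger_200}; see also Fang--Zhao \cite{Fang_Zhao_2011} and, in the scalar periodic case, Berestycki--Hamel--Roques \cite{Berestycki_Ham_1}), the rightward sub-problem is governed from below by the $\nu>0$ branch of $K$ and the leftward sub-problem by the $\nu<0$ branch:
\[
c_{+}^{\star}\left(\left(u_{1},\tilde{u}_{2}-u_{2}\right),\left(\tilde{u}_{1},\tilde{u}_{2}\right)\right)\geq\inf_{\nu>0}\frac{K\left(\nu\right)}{\nu},\qquad c_{-}^{\star}\left(\left(u_{1},\tilde{u}_{2}-u_{2}\right),\left(0,0\right)\right)\geq-\sup_{\nu<0}\frac{K\left(\nu\right)}{\nu}.
\]
It remains to see that the right-hand sides sum to a positive number, which is an elementary convexity fact driven entirely by $K(0)>0$: writing $c=\inf_{\nu>0}K(\nu)/\nu$, the affine map $\ell(\nu)=c\nu$ satisfies $K\geq\ell$ on $[0,+\infty)$ with $K(0)>0=\ell(0)$, so $K-\ell$ is convex, nonnegative on $[0,+\infty)$ and vanishes at its minimiser $\nu_{*}>0$, hence is nonincreasing on $\left(-\infty,\nu_{*}\right)$; thus $\left(K-\ell\right)(\nu)\geq K(0)>0$ for every $\nu<0$, and dividing by $\nu<0$ yields $K(\nu)/\nu<c$ for all $\nu<0$, while $\left(K-\ell\right)(\nu)/\nu\to-\infty$ as $\nu\to0^{-}$ and as $\nu\to-\infty$, so $\sup_{\nu<0}K(\nu)/\nu$ is attained and strictly below $c$. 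Therefore $\inf_{\nu>0}K(\nu)/\nu-\sup_{\nu<0}K(\nu)/\nu>0$, and the two displayed bounds give the conclusion.

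\smallskip

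The main obstacle is not the convexity computation but the bookkeeping of the middle step: one has to verify that Fang--Zhao's abstract spreading speeds $c_{+}^{\star}$ and $c_{-}^{\star}$ --- with their orientation and sign conventions, and read through the change of variables $v_{2}=\tilde{u}_{2}-u_{2}$ --- are indeed controlled from below by, respectively, the $\nu>0$ and $\nu<0$ branches of $K$, \emph{both} originating from the linearisation at the intermediate state $\left(u_{1},\tilde{u}_{2}-u_{2}\right)$ rather than at either stable endpoint. This amounts to writing out the linearised semiflow of each sub-problem and applying the principal-eigenvalue/spreading-speed correspondence of \cite{Weinberger_200,Fang_Zhao_2011} in the present periodic two-component framework; once that correspondence is pinned down, positivity of $c_{+}^{\star}+c_{-}^{\star}$ is forced purely by the strict instability $\lambda_{1,per}\left(-\mathcal{A}_{\left(u_{1},u_{2}\right)}\right)<0$ of Proposition \ref{prop:unstable_s_s}.
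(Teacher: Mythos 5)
Your proposal is sound and, for its first half, coincides with the paper's strategy: like the paper, you linearise $\left(\mathcal{M}\right)$ at the intermediate state $\left(u_{1},\tilde{u}_{2}-u_{2}\right)$, so that both monostable sub-problems are governed by the same operator $\mathcal{A}_{\left(u_{1},u_{2}\right)}$, and you feed the instability $\lambda_{1,per}\left(-\mathcal{A}_{\left(u_{1},u_{2}\right)}\right)<0$ of Proposition \ref{prop:unstable_s_s} into Weinberger's lower bound \cite[Theorem 2.4]{Weinberger_200}. The endgame, however, is genuinely different. The paper bounds \emph{both} $c_{+}^{\star}$ and $c_{-}^{\star}$ from below by the single, direction-independent quantity $\inf_{\mu>0}\frac{-\lambda_{1,per}\left(-\mu^{2}\mathrm{diag}\left(1,d\right)-\mathcal{A}_{\left(u_{1},u_{2}\right)}\right)}{\mu}$ and estimates it explicitly by $2\sqrt{\min\left(1,d\right)\left|\lambda_{1,per}\left(-\mathcal{A}_{\left(u_{1},u_{2}\right)}\right)\right|}>0$ via monotonicity of the principal eigenvalue in the zeroth-order term; this yields the stronger conclusion that each speed is individually positive. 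You instead keep the true conjugated eigenvalue $K\left(\nu\right)$, with the first-order term produced by the conjugation, and run the classical ``sum of one-sided linear speeds'' convexity argument driven by $K\left(0\right)>0$. Your convexity computation is correct (both extrema are attained for the reasons you give, and $K-\ell\geq K\left(0\right)$ on $\left(-\infty,0\right)$ does follow since $\nu_{*}$ is a global minimiser of the convex function $K-\ell$), it only controls the sum rather than each speed, but that is exactly what Fang--Zhao requires, and it has the merit of never having to discard the drift term coming from the exponential weight.

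One caveat: the step you dismiss as bookkeeping is where the paper spends most of its effort. Weinberger's Theorem 2.4 does not apply to the linearisation for free: one must produce $\delta\in\left(0,1\right)$ and $\eta>0$ such that $Q_{t}\left[\left(v_{1},v_{2}\right)+\left(u_{1},\tilde{u}_{2}-u_{2}\right)\right]-\left(u_{1},\tilde{u}_{2}-u_{2}\right)\geq\left(1-\delta\right)Q_{t}^{u,lin}\left[\left(v_{1},v_{2}\right)\right]$ for all $\left(0,0\right)\leq\left(v_{1},v_{2}\right)\leq\left(\eta,\eta\right)$, together with the mirror inequality below the intermediate state. The paper obtains this by absorbing the $\varepsilon\max\left(\max_{\overline{C}}v_{1},\max_{\overline{C}}v_{2}\right)$ linearisation error into $\delta\min_{\overline{C}}Q_{t}^{u,lin}\left[\left(v_{1},v_{2}\right)\right]$, which relies on the strong positivity of $\mathcal{A}_{\left(u_{1},u_{2}\right)}$ from Lemma \ref{lem:A_strongly_positive} (so that the minimum of the linear flow is comparable to the maximum of the datum), not merely on a check of orientation and sign conventions. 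With that verification written out, your argument is complete.
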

\begin{rem*}
At least formally, since $\left(u_{1},u_{2}\right)$ vanishes as $k\to+\infty$,
we have:
\[
c_{+}^{\star}\left(\left(u_{1},\tilde{u}_{2}-u_{2}\right),\left(\tilde{u}_{1},\tilde{u}_{2}\right)\right)\to c_{+}^{\star}\left(\left(0,\tilde{u}_{2}\right),\left(\tilde{u}_{1},\tilde{u}_{2}\right)\right),
\]
\[
c_{-}^{\star}\left(\left(u_{1},\tilde{u}_{2}-u_{2}\right),\left(0,0\right)\right)\to c_{-}^{\star}\left(\left(0,\tilde{u}_{2}\right),\left(0,0\right)\right).
\]

It is easily seen that the first limit is in fact the spreading speed
of the scalar KPP pulsating front connecting $\tilde{u}_{1}$ to $0$
for the equation $\partial_{t}u_{1}-\partial_{xx}u_{1}=u_{1}f_{1}\left[u_{1}\right]$
whereas the second one is in fact the spreading speed of the scalar
KPP pulsating front connecting $\tilde{u}_{2}$ to $0$ for the equation
$\partial_{t}u_{2}-d\partial_{xx}u_{2}=u_{2}f_{2}\left[u_{2}\right]$.
These limiting speeds are both positive. Hence, heuristically, we
expect that both $c_{+}^{\star}\left(\left(u_{1},\tilde{u}_{2}-u_{2}\right),\left(\tilde{u}_{1},\tilde{u}_{2}\right)\right)$
and $c_{-}^{\star}\left(\left(u_{1},\tilde{u}_{2}-u_{2}\right),\left(0,0\right)\right)$
are positive whenever $k$ is large enough, and this is indeed what
we will prove. 
\end{rem*}
\begin{proof}
Let $k>k^{\star}$, $\left(u_{1},u_{2}\right)\in S$, $\mathcal{A}_{\left(u_{1},u_{2}\right)}$
be the associated linear elliptic operator defined as in Lemma \ref{lem:A_strongly_positive},
$t>0$, $Q_{t}$ be the semiflow associated with $\left(\mathcal{M}\right)$
and $Q_{t}^{u,lin}$ be the linear semiflow associated with $\partial_{t}-\mathcal{A}_{\left(u_{1},u_{2}\right)}$.
We intend to use Weinberger\textquoteright s theory \cite[Theorem 2.4]{Weinberger_200}
in order to establish that:
\[
c_{+}^{\star}\left(\left(u_{1},\tilde{u}_{2}-u_{2}\right),\left(\tilde{u}_{1},\tilde{u}_{2}\right)\right)\geq\inf_{\mu>0}\frac{-\lambda_{1,per}\left(-\mu^{2}\text{diag}\left(1,d\right)-\mathcal{A}_{\left(u_{1},u_{2}\right)}\right)}{\mu}.
\]

(The exponential relation between the periodic principal eigenvalue
of the elliptic operator $\mathcal{A}_{\left(u_{1},u_{2}\right)}$
and that of the semiflow $Q_{t}^{u,lin}$ is classical and not detailed
here.)

On one hand, to apply \cite[Theorem 2.4]{Weinberger_200}, we have
to find $\delta\in\left(0,1\right)$ and $\eta_{+}>0$ such that,
for all $\left(v_{1},v_{2}\right)\in\left[\left(0,0\right),\left(\eta_{+},\eta_{+}\right)\right]$:
\[
Q_{t}\left[\left(v_{1},v_{2}\right)+\left(u_{1},\tilde{u}_{2}-u_{2}\right)\right]-\left(u_{1},\tilde{u}_{2}-u_{2}\right)\geq\left(1-\delta\right)Q_{t}^{u,lin}\left[\left(v_{1},v_{2}\right)\right],
\]
 that is such that:
\[
\delta Q_{t}^{u,lin}\left[\left(v_{1},v_{2}\right)\right]\geq Q_{t}^{u,lin}\left[\left(v_{1},v_{2}\right)\right]+\left(u_{1},\tilde{u}_{2}-u_{2}\right)-Q_{t}\left[\left(v_{1},v_{2}\right)+\left(u_{1},\tilde{u}_{2}-u_{2}\right)\right].
\]
On the other hand, by definition of $Q^{u,lin}$, for all $\varepsilon>0$,
we have the existence of $\eta_{\varepsilon}>0$ such that, if $\left(v_{1},v_{2}\right)\in\left[\left(0,0\right),\left(\eta_{\varepsilon},\eta_{\varepsilon}\right)\right]$:
\[
\left|Q_{t}^{u,lin}\left[\left(v_{1},v_{2}\right)\right]+\left(u_{1},\tilde{u}_{2}-u_{2}\right)-Q_{t}\left[\left(v_{1},v_{2}\right)+\left(u_{1},\tilde{u}_{2}-u_{2}\right)\right]\right|\leq\varepsilon\max\left(\max_{\overline{C}}v_{1},\max_{\overline{C}}v_{2}\right).
\]

Hence it would be sufficient to show, for all $\left(v_{1},v_{2}\right)\in\left[\left(0,0\right),\left(\eta_{\varepsilon},\eta_{\varepsilon}\right)\right]$,
the following inequality:
\[
\varepsilon\max\left(\max_{\overline{C}}v_{1},\max_{\overline{C}}v_{2}\right)\leq\delta\min\left(\min_{\overline{C}}Q_{t}^{u,lin}\left[\left(v_{1},v_{2}\right)\right]_{1},\min_{\overline{C}}Q_{t}^{u,lin}\left[\left(v_{1},v_{2}\right)\right]_{2}\right),
\]
 which is a straightforward consequence of the positivity of $\mathcal{A}_{\left(u_{1},u_{2}\right)}$
and of the instability of $\left(u_{1},u_{2}\right)$ (fixing for
instance $\delta=\frac{1}{2}$ and then choosing $\varepsilon$ small
enough). Finally we define $\eta_{+}=\eta_{\varepsilon}$.

Applying the same sketch of proof and being careful with the signs,
we prove the existence of $\eta_{-}>0$ such that, for all $\left(v_{1},v_{2}\right)\in\left[\left(0,0\right),\left(\eta_{-},\eta_{-}\right)\right]$:
\[
-Q_{t}\left[-\left(v_{1},v_{2}\right)+\left(u_{1},\tilde{u}_{2}-u_{2}\right)\right]+\left(u_{1},\tilde{u}_{2}-u_{2}\right)\geq\frac{1}{2}Q_{t}^{u,lin}\left[\left(v_{1},v_{2}\right)\right],
\]
whence a second inequality is established:
\[
c_{-}^{\star}\left(\left(u_{1},\tilde{u}_{2}-u_{2}\right),\left(0,0\right)\right)\geq\inf_{\mu>0}\frac{-\lambda_{1,per}\left(-\mu^{2}\text{diag}\left(1,d\right)-\mathcal{A}_{\left(u_{1},u_{2}\right)}\right)}{\mu}.
\]

It is worthy to point out that both spreading speeds are estimated
from below by the same quantity.

To conclude, we just have to notice the following inequality, true
for all $\mu>0$:
\[
\lambda_{1,per}\left(-\mu^{2}\text{diag}\left(1,d\right)-\mathcal{A}_{\left(u_{1},u_{2}\right)}\right)\leq-\mu^{2}\min\left(1,d\right)+\lambda_{1,per}\left(-\mathcal{A}_{\left(u_{1},u_{2}\right)}\right)<0.
\]

In particular, from:
\[
\frac{-\lambda_{1,per}\left(-\mu^{2}\text{diag}\left(1,d\right)-\mathcal{A}_{\left(u_{1},u_{2}\right)}\right)}{\mu}\geq\inf_{\mu>0}\left(\mu\min\left(1,d\right)-\frac{\lambda_{1,per}\left(-\mathcal{A}_{\left(u_{1},u_{2}\right)}\right)}{\mu}\right),
\]
we deduce the following estimate:
\[
\inf_{\mu>0}\frac{-\lambda_{1,per}\left(-\mu^{2}\text{diag}\left(1,d\right)-\mathcal{A}_{\left(u_{1},u_{2}\right)}\right)}{\mu}\geq2\sqrt{\min\left(1,d\right)\left|\lambda_{1,per}\left(-\mathcal{A}_{\left(u_{1},u_{2}\right)}\right)\right|}>0.
\]
\end{proof}

\subsection{Existence of pulsating fronts connecting both extinction states}

We are now able to state rigorously the existence of pulsating fronts
thanks to Fang\textendash Zhao \cite{Fang_Zhao_2011}. 
\begin{thm}
For any $k>k^{\star}$, there exists $c\in\mathbb{R}$ and $\left(\varphi_{1},\varphi_{2}\right)\in\mathcal{C}\left(\mathbb{R}^{2},\mathbb{R}^{2}\right)$
such that the following properties hold.
\begin{enumerate}
\item $\varphi_{1}$ and $\varphi_{2}$ are respectively non-increasing
and non-decreasing with respect to their first variable, generically
noted $\xi$. 
\item $\varphi_{1}$ and $\varphi_{2}$ are periodic with respect to their
second variable, generically noted $x$.
\item As $\xi\to+\infty$, 
\[
\max_{x\in\left[0,L\right]}\left|\left(\varphi_{1},\varphi_{2}\right)\left(-\xi,x\right)-\left(\tilde{u}_{1},0\right)\left(x\right)\right|+\max_{x\in\left[0,L\right]}\left|\left(\varphi_{1},\varphi_{2}\right)\left(\xi,x\right)-\left(0,\tilde{u}_{2}\right)\left(x\right)\right|\to0.
\]
\item $\left(u_{1},u_{2}\right):\left(t,x\right)\mapsto\left(\varphi_{1},\varphi_{2}\right)\left(x-ct,x\right)$
is a classical solution of $\left(\mathcal{P}\right)$.
\end{enumerate}
\end{thm}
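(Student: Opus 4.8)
The plan is to apply the abstract result of Fang--Zhao \cite{Fang_Zhao_2011} to the monotone (cooperative) reformulation $\left(\mathcal{M}\right)$, and then to translate the resulting pulsating front for $\left(u_1,v_2\right)$ back into a pulsating front for the original competitive system $\left(\mathcal{P}\right)$ via the change of variables $v_2 = J\left(u_2\right) = \tilde u_2 - u_2$. The abstract theorem produces, for each $k>k^\star$, a speed $c\in\mathbb{R}$ and a front profile connecting $\left(\tilde u_1,\tilde u_2\right)$ to $\left(0,0\right)$ in the ordering of $\left(\mathcal{M}\right)$; unwinding the transformation, the first component $\varphi_1$ will inherit monotonicity (non-increasing in $\xi$) directly, while the second component of the front for $\left(\mathcal{M}\right)$ is $\tilde u_2 - \varphi_2$, so $\varphi_2$ will be non-decreasing, and the endpoints $\left(\tilde u_1,\tilde u_2\right)$ and $\left(0,0\right)$ of the $\left(\mathcal{M}\right)$-front correspond precisely to $\left(\tilde u_1,0\right)$ and $\left(0,\tilde u_2\right)$ for $\left(\mathcal{P}\right)$, giving items (1)--(3).

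The main work is checking that all hypotheses of \cite{Fang_Zhao_2011} hold for $\left(\mathcal{M}\right)$. These were itemized in Subsection 2.1: the Poincar\'e map $Q_t$ must be spatially periodic, continuous and compact for the locally uniform topology, and strongly monotone; both $\left(0,0\right)$ and $\left(\tilde u_1,\tilde u_2\right)$ must be locally asymptotically stable periodic stationary states with every intermediate periodic stationary state unstable; and the counter-propagation condition on spreading speeds must hold. The four properties of $Q_t$ are standard consequences of parabolic theory for cooperative systems (the excerpt itself says they are ``quite standard to check''). The stability of $\left(0,0\right)$ and $\left(\tilde u_1,\tilde u_2\right)$ for $\left(\mathcal{M}\right)$ is exactly the stability of the two extinction states $\left(\tilde u_1,0\right)$ and $\left(0,\tilde u_2\right)$ for $\left(\mathcal{P}\right)$, established in the Proposition of Subsection 2.2. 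Instability of every intermediate periodic stationary state is Proposition \ref{prop:unstable_s_s}. The counter-propagation hypothesis is the content of the Proposition in Subsection 2.4. So, assuming those results, verifying the hypotheses is essentially a matter of assembling already-proven statements.

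Concretely I would proceed as follows. First, fix $k>k^\star$ (where $k^\star$ is enlarged once more if needed so that all of Propositions in Subsections 2.2--2.4 apply). Second, record that $\left(\mathcal{M}\right)$ is a cooperative reaction--diffusion system with $L$-periodic, smooth coefficients on $\mathbb{R}$, bounded between $\left(0,0\right)$ and $\left(\tilde u_1,\tilde u_2\right)$ by the Corollary following the Lemma that introduces $J$, so that $Q_t$ is well-defined on the order interval and enjoys the four structural properties (periodicity from periodicity of the coefficients; continuity and compactness from parabolic estimates and the compact embeddings fixed in Subsection 2.3; strong monotonicity from the strong maximum principle for cooperative systems). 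Third, invoke the three substantive Propositions to get bistable-type structure: the two endpoints are locally asymptotically stable, every intermediate periodic stationary state is unstable (hence, as noted, the set of such states is totally unordered, which is what Fang--Zhao require), and the counter-propagation inequality $c_+^\star + c_-^\star > 0$ holds at every such intermediate state. Fourth, apply \cite[main theorem]{Fang_Zhao_2011} to obtain $c\in\mathbb{R}$ and a pulsating front $\left(\psi_1,\psi_2\right)$ of $\left(\mathcal{M}\right)$ connecting $\left(\tilde u_1,\tilde u_2\right)$ at $\xi=-\infty$ to $\left(0,0\right)$ at $\xi=+\infty$, monotone and $L$-periodic in $x$. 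Fifth, set $\varphi_1 = \psi_1$ and $\varphi_2 = \tilde u_2 - \psi_2$; check that $\left(\varphi_1,\varphi_2\right)$ satisfies $\left(\mathcal{P}\right)$ (this is just reversing the linear substitution in the Lemma defining $J$), that $\varphi_1$ is non-increasing and $\varphi_2$ non-decreasing in $\xi$, that they are $L$-periodic in $x$, and that the limits in (3) hold, observing $\tilde u_2 - \psi_2 \to \tilde u_2 - \tilde u_2 = 0$ as $\xi\to-\infty$ and $\tilde u_2 - \psi_2 \to \tilde u_2$ as $\xi\to+\infty$. Finally, regularity of $\left(u_1,u_2\right)\left(t,x\right) = \left(\varphi_1,\varphi_2\right)\left(x-ct,x\right)$ as a classical solution follows from parabolic regularity applied to the front equation.

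The expected main obstacle is not any single deep step but rather the bookkeeping of the abstract framework: one must confirm that Fang--Zhao's hypotheses are stated in a form genuinely matching $\left(\mathcal{M}\right)$ (in particular that their ``bistability'' allows a continuum of unstable intermediate states provided they are totally unordered and counter-propagating, which is precisely why Proposition \ref{prop:unstable_s_s} and the counter-propagation Proposition were proven), and that the translation back through $J$ does not disturb the convergence or monotonicity claims. There is also a minor subtlety in that the front's convergence in \cite{Fang_Zhao_2011} is stated in an appropriate uniform-in-$x$ sense, which must be matched to the $\max_{x\in[0,L]}$ formulation in item (3); this is routine given $L$-periodicity but worth stating explicitly.
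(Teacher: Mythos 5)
Your proposal is correct and follows essentially the same route as the paper: reduce to the cooperative system $\left(\mathcal{M}\right)$ via $v_{2}=J\left(u_{2}\right)=\tilde{u}_{2}-u_{2}$, verify the Fang--Zhao hypotheses using exactly the three substantive ingredients the paper proves (stability of the extinction states, instability of all intermediate periodic stationary states, and the counter-propagation inequality), plus the standard properties of the Poincar\'e map, then apply the abstract theorem and undo the change of variables to read off items (1)--(4). The paper's own treatment is precisely this assembly, with the regularity remark handled afterwards by parabolic regularity just as you indicate.
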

\begin{rem*}
For any $\xi_{0}\in\mathbb{R}$, $\left(\xi,x\right)\mapsto\left(\varphi_{1},\varphi_{2}\right)\left(\xi+\xi_{0},x\right)$
is a pulsating front solution of $\left(\mathcal{P}\right)$ as well. 

Regarding the regularity of $\left(\varphi_{1},\varphi_{2}\right)$,
we recall that, even if Fang\textendash Zhao \cite{Fang_Zhao_2011}
(as well as Weinberger \cite{Weinberger_200}) worked in the framework
of continuous functions, by classical parabolic regularity, a continuous
solution of $\left(\mathcal{P}\right)$ is in $\mathcal{C}_{loc}^{1}\left(\mathbb{R},\mathcal{C}_{loc}^{2}\left(\mathbb{R},\mathbb{R}^{2}\right)\right)$.
Hence $\left(\varphi_{1},\varphi_{2}\right)$ is a fortiori in $\mathcal{C}_{loc}^{1}\left(\mathbb{R}^{2},\mathbb{R}^{2}\right)$.
This can be improved provided $f_{1}$ and $f_{2}$ are $\mathcal{C}^{1}$
with respect to $x$. Indeed, differentiating $\left(\mathcal{P}\right)$
with respect to $t$ and $x$ shows similarly that $\partial_{t}\left(u_{1},u_{2}\right)\in\mathcal{C}_{loc}^{1}\left(\mathbb{R},\mathcal{C}_{loc}^{2}\left(\mathbb{R},\mathbb{R}^{2}\right)\right)$
and $\partial_{x}\left(u_{1},u_{2}\right)\in\mathcal{C}_{loc}^{1}\left(\mathbb{R},\mathcal{C}_{loc}^{2}\left(\mathbb{R},\mathbb{R}^{2}\right)\right)$.
In such a case, $\left(\varphi_{1},\varphi_{2}\right)$ is at least
in $\mathcal{C}^{2}\left(\mathbb{R}^{2},\mathbb{R}^{2}\right)$.
\end{rem*}

\subsection*{Thanks}

The author would like to thank Grégoire Nadin for the attention he
paid to this work. The author is especially grateful to Jian Fang
who explained in person some technicalities with great patience and
clarity. The author would also like to thank the anonymous referee
for valuable input which considerably simplified the verification
of the counter-propagation hypothesis.

\bibliographystyle{amsplain}
\bibliography{ref}

\end{document}